\DeclareSymbolFont{cyrletters}{OT2}{wncyr}{m}{n}
\DeclareMathSymbol{\berd}{\beta}{cyrletters}{"42}
\newtheorem{thm}{Theorem}[section]
\newtheorem{cor}[thm]{Corollary}
\newtheorem{lem}[thm]{Lemma}
\newtheorem*{lemA1}{Lemma A.1}
\newtheorem*{lemA2}{Lemma A.2}
\newtheorem{prop}[thm]{Proposition}
\theoremstyle{definition}
\theoremstyle{remark}
\newtheorem{rem}[thm]{Remark}
\newtheorem*{claim}{Claim}
\numberwithin{equation}{section}
\numberwithin{figure}{section}
\newcommand{\normalsp}{{\mathcal N}}
\newcommand{\Prob}{{\mathbf{P}}}
\newcommand{\Tr}{\operatorname{Tr}}
\newcommand{\CK}{{\mathbf K}}
\newcommand{\PP}{{\mathcal P}}
\newcommand{\bs}{{\bigskip}}
\newcommand{\no}{{\noindent}}
\newcommand{\pa}{{\partial}}
\newcommand{\NN}{{\mathcal N}}
\newcommand{\E}{{\mathbf{E}}}
\newcommand{\ti}{\tilde}
\def\Int{\text{\rm int}}
\def\sm{\setminus}
\def\ms{\medskip}
\def\const{\text{\rm const.}}
\def\ss{\smallskip}
\newcommand{\bp}{{\bar\partial}}
\newcommand{\K}{{\mathbf{K}}}
\renewcommand{\d}{{\partial}}
\newcommand{\e}{\mathrm e}
\newcommand{\C}{{\mathbb C}}
\newcommand{\D}{{\mathbb D}}
\newcommand{\T}{{\mathbb T}}
\newcommand{\R}{{\mathbb R}}
\newcommand{\1}{{\mathbf 1}}
\newcommand\coity{{\mathcal C \sb 0 \sp \infty}}
\newcommand{\re}{\operatorname{Re}}
\newcommand{\im}{\operatorname{Im}}
\newcommand{\imag}{{\mathrm i}}
\newcommand{\dist}{\operatorname{dist}}
\newcommand{\supp}{\operatorname{supp}}
\newcommand{\eps}{\varepsilon}
\newcommand{\trace}{\operatorname{trace}}
\def\lpar{\left (}
\def\rpar{\right )}
\def\labs{\left |}
\def\rabs{\right |}
\def\babs#1{\labs {#1} \rabs}
\begin{document}

\title{Random normal matrices and Ward identities}

\author{Yacin~Ameur}

\address{Yacin Ameur\\
Centre for Mathematical Sciences\\
  Lund University, Sweden}

\email{Yacin.Ameur@maths.lth.se}

\author{Haakan Hedenmalm}
\address{Haakan Hedenmalm\\
Department of Mathematics\\ Royal Institute of Technology\\ S–100 44 Stockholm, Sweden}
\email{haakanh@math.kth.se}

\author{Nikolai~Makarov}

\address{Makarov: Mathematics\\
California Institute of Technology\\
Pasadena, CA 91125\\
USA}

\email{makarov@caltech.edu}


\begin{abstract} Consider the random normal matrix ensemble associated with a potential on the plane
which is sufficiently strong near infinity. It is known that, to a first approximation,
the eigenvalues obey a certain equilibrium distribution, given by Frostman's solution to the minimum energy problem of
weighted logarithmic potential theory. On a finer scale, one can consider fluctuations of eigenvalues about the equilibrium.
In the present paper, we give the correction to the expectation of fluctuations, and we
prove that the potential field of the corrected fluctuations converge on smooth test functions to a Gaussian free field with
free boundary conditions on the droplet associated with the potential.
\end{abstract}

\maketitle

Given a suitable real "weight function'' in the plane, it is well-known how to associate a corresponding (weighted)
random normal matrix ensemble (in short: RNM-ensemble). Under reasonable conditions on the weight function, the eigenvalues
of matrices picked randomly from the ensemble
will condensate on a certain compact subset $S$ of the complex plane, as the order of the matrices
tends to infinity. The set $S$ is known as the \textit{droplet} corresponding to the ensemble. It is well-known that the droplet can be described
using weighted logarithmic potential theory and, in its turn, the droplet determines the
classical equilibrium distribution of the eigenvalues (Frostman's equilibrium measure).

In this paper we prove a formula for the expectation of fluctuations about the equilibrium distribution,
for linear statistics of the eigenvalues of random normal matrices. We also prove the convergence of the
potential fields corresponding to corrected fluctuations to a Gaussian free field on $S$ with free boundary conditions.

Our approach uses \textit{Ward identities}, that is, identities satisfied by the joint intensities of the point-process
of eigenvalues, which follow from the reparametrization
invariance of the partition function of the ensemble. Ward identities are well known in field theories. Analogous results in random Hermitian matrix theory are known
due to Johansson \cite{J}, in the case of a polynomial
weight.

\subsection*{General notation} By $D(a,r)$ we mean the open Euclidean disk with center $a$ and radius $r$.
By "$\dist$'' we mean the
Euclidean distance in the plane. If $A_n$ and $B_n$ are expressions depending on a positive integer $n$, we write
$A_n\lesssim B_n$ to indicate that $A_n\le CB_n$ for all $n$ large enough where $C$ is independent of $n$. The notation
$A_n\asymp B_n$ means that $A_n\lesssim B_n$ and $B_n\lesssim A_n$.
When $\mu$ is a measure and $f$ a $\mu$-measurable function, we write $\mu(f)=\int f~d\mu$.
We write $\d=\frac 1 2(\d/\d x-i \d/\d y)$ and $\bar \d=\frac 1 2(\d/\d x+i \d/\d y)$ for the complex derivatives.

\section{Random normal matrix ensembles} \label{Sec1}

\subsection{The distribution of eigenvalues} Let $Q:\C\to \R\cup\{+\infty\}$ be a suitable lower semi-continuous function subject to the
growth condition
\begin{equation}\label{E11}\liminf_{z\to\infty} \frac{Q(z)}{\log|z|}>1.\end{equation}
We refer to $Q$ as the \textit{weight function} or the \textit{potential}.

Let $\mathcal{N}_n$ be the set of all $n\times n$ normal matrices $M$, i.e., $MM^*=M^*M$. The \textit{partition function}
on $\mathcal{N}_n$ associated with $Q$ is the function
\begin{equation*}\mathcal{Z}_n=\int_{\mathcal{N}_n}e^{-2n\trace Q(M)}~dM_n,\end{equation*}
where $dM_n$ is the Riemannian volume form on $\normalsp_n$ inherited from the space $\C^{n^2}$ of all $n\times n$ matrices, and where $\trace Q:\normalsp_n\to \R\cup\{+\infty\}$ is the random variable
$$\trace Q(M)=\sum_{\lambda_j~\in~\text{spec}(M)} Q(\lambda_j),$$
i.e., the usual trace of the matrix $Q(M)$. We equip $\mathcal{N}_n$ with the probability measure
\begin{equation*}d\mathcal{P}_n=\frac 1 {\mathcal{Z}_n}e^{-2n\trace Q(M)}~dM_n,\end{equation*}
and speak of the \textit{random normal matrix ensemble} or "RNM-ensemble'' associated with $Q$.

The measure $\mathcal{P}_n$ induces a measure $\Prob_n$ on the space $\C^n$ of eigenvalues,
 which is known as the \textit{density of states} in the external field $Q$; it is given by
\begin{equation*}d\Prob_n(\lambda)=\frac 1 {Z_n}
e^{-H_n(\lambda)}~dA_n(\lambda),\quad \lambda=(\lambda_j)_1^n\in \C^n.\end{equation*}
Here we have put
\begin{equation*}
H_n(\lambda)=\sum_{j\ne k}\log \frac 1 {\babs{\lambda_j-\lambda_k}}+2n\sum_{j=1}^n Q(\lambda_j),\end{equation*}
and $dA_n(\lambda)=d^2\lambda_1\cdots d^2\lambda_n$ denotes Lebesgue measure in $\C^n$, while
$Z_n$ is the normalizing constant giving $\Prob_n$ unit mass. By a slight abuse of language, we will refer to
$Z_n$ as the partition function of the ensemble.

Notice that $H_n$ is the energy (Hamiltonian) of a system of $n$ identical point charges in the plane located at the points $\lambda_j$, under influence
of the external field $2nQ$. In this interpretation, $\mathbf{P}_n$ is the law of the Coulomb gas in the external magnetic field $2nQ$ (at inverse temperature $\beta=2$). In particular, this explains the repelling nature of the eigenvalues of random normal matrices; they tend to be very spread out
in the vicinity of the droplet, just like point charges would.

Consider the $n$-point configuration ("set'' with possible repeated elements) $\{\lambda_j\}_1^n$ of eigenvalues of a normal matrix picked randomly with respect to $\mathcal{P}_n$. In an obvious manner, the measure $\Prob_n$
induces a probability law on the $n$-point configuration space; this is the law of the \textit{$n$-point process} $\Psi_n=\{\lambda_j\}_1^n$ associated to $Q$.

It is well-known that the process $\Psi_n$ is \textit{determinantal}. This means that there exists a Hermitian function
$\CK_n$, called the \textit{correlation kernel of the process} such that the density of states can be represented in the form
\begin{equation*}d\Prob_n(\lambda)=\frac 1 {n!}\det\lpar \CK_n(\lambda_j,\lambda_k)\rpar_{j,k=1}^n dA_n(\lambda),\qquad \lambda\in\C^n.\end{equation*}
One has
\begin{equation*}\CK_n(z,w)=K_n(z,w)e^{-n\lpar Q(z)+Q(w)\rpar},\end{equation*}
where $K_n$ is the reproducing kernel of the space $\PP_n\lpar e^{-2nQ}\rpar$ of analytic
polynomials of degree at most $n-1$ with norm induced from the usual $L^2$ space on $\C$ associated with the weight function
$e^{-2nQ}$. Alternatively, we can regard $\mathbf{K}_n$ as the reproducing kernel for the subspace
\begin{equation*}W_n=\{pe^{-nQ};~p\text{ is an analytic polynomial of degree less than }n\}\subset L^2(\C).\end{equation*}
We have the frequently useful identities
$$f(z)=\int_\C f(w)\overline{\CK_n(z,w)}~d^2w,\quad f\in W_n,$$
and
$$\int_\C\CK_n(z,z)~d^2 z=n.$$

We refer to \cite{B}, \cite{S}, \cite{EF}, \cite{HM}, \cite{AHM2}, \cite{M} for more
details on point-processes and random matrices.

\subsection{The equilibrium measure and the droplet} We are interested in the asymptotic distribution of eigenvalues
as $n$, the size of the matrices, increases indefinitely. Let $u_n$ denote the one-point function of $\Prob_n$, i.e.,
\begin{equation*}u_n(\lambda)=\frac 1 n \mathbf{K}_n(\lambda,\lambda),\quad \lambda\in \C.\end{equation*}
With a suitable function $f$ on $\C$, we associate the
random variable $\Tr_n[f]$ on the probability space $(\C^n,\Prob_n)$ via
$$\Tr_n[f](\lambda)=\sum_{i=1}^n f(\lambda_i).$$
The expectation is given by
$$\E_n\lpar \Tr_n[f]\rpar=n\int_\C f\cdot u_n.$$
According to Johansson (see \cite{HM}) we have weak-star convergence of the measures
$$d\sigma_n(z)=u_n(z)d^2 z$$
to some probability measure $\sigma=\sigma(Q)$ on $\C$.

In fact, $\sigma$ is the Frostman equilibrium measure of the logarithmic potential theory with external field
$Q$. We briefly recall the definition and some basic properties of this probability measure,
cf. \cite{ST} and \cite{HM} for proofs and further details.

Let $S=\supp\sigma$ and assume that $Q$ is $\mathcal{C}^2$-smooth in some neighbourhood of $S$. Then $S$ is compact, $Q$ is subharmonic on $S$, and $\sigma$ is
absolutely continuous with density
\begin{equation*}u=\frac 1 {2\pi}\Delta Q\cdot \1_S.\end{equation*}
We refer to the compact set $S=S_Q$ as the \textit{droplet} corresponding to the external field $Q$.

Our present goal is to describe the fluctuations of the density field $\mu_n=\sum_{j=1}^n\delta_{\lambda_j}$ around the equilibrium. More precisely, we will study the distribution (linear statistic)
\begin{equation*}f\mapsto \mu_n(f)-n\sigma(f)=\Tr_n[f]-n\sigma(f),\qquad f\in\coity(\C).\end{equation*}
We will denote by $\nu_n$ the measure with density $n(u_n-u)$, i.e.,
\begin{equation*}\nu_n[f]=\mathbf{E}_n\left[\Tr_n[f]\right]-n\sigma(f)=n(\sigma_n-\sigma)(f),\qquad f\in\coity(\C).\end{equation*}

\subsection{Assumptions on the potential} To state the main results of the paper we make the following three assumptions:

\bs\no (A1) (smoothness) $Q$ is real analytic (written
$Q\in C^\omega$)  in some  neighborhood of the droplet $S=S_Q$;

\bs\no (A2) (regularity)  $\Delta Q\ne 0$ in $S$;

\bs\no (A3) (topology) $\pa S$ is a $C^\omega$-smooth Jordan curve.

\bs\no We will comment on the nature and consequences of these assumptions later.
Let us denote
$$ L=\log\Delta Q.$$
This function is well-defined and  $C^\omega$ in a   neighborhood of the droplet.

\subsection{The Neumann jump operator} We will use the following general system of notation. If $g$ is a continuous function defined in  a neighborhood of $S$, then we write  $g^S$ for the function on the Riemann sphere $\hat \C$ such that
 $g^S$ equals $g$ in $S$ while $g^S$ equals the
harmonic extension of $g\bigm|_{\d S}$ to $\hat \C\setminus S$ on that set.

If $g$ is smooth on $S$, then
$$\NN_\Omega g:=-\frac {\pa g|_S}{\pa n},\qquad \Omega:=\Int(S),$$
where $n$ is the (exterior) unit normal of $\Omega$.  We define  the normal derivative $\NN_{\Omega_*}g$ for the complementary domain
$\Omega_*:=\hat \C\sm S$ similarly. If both normal  derivatives exist,  then we define (Neumann's jump)
$$\NN g\equiv \NN_{\pa S}:=\NN_\Omega g+\NN_{\Omega_*} g.$$

\ms\no
By Green's formula we have the identity (of measures)
\begin{equation}\label{E1.1}\Delta g^S=\Delta g\cdot 1_\Omega+\NN (g^S)~ds,\end{equation}
where $ds$ is the arclength measure on $\pa S$.

\ms
We now verify \eqref{E1.1}. Let $\phi$ be a test function.  The left hand side in \eqref{E1.1}
applied to $\phi$ is
$$\int_\C\phi \Delta g^S=\int_\C g^S\Delta\phi  =\int_Sg\Delta\phi  +\int_{\C\sm S} g^S\Delta\phi ,$$
and the right hand side is
$$\int_S\phi\Delta g+\int\phi \NN(g^S)ds.$$
Thus we need to check that
$$\int_S(g\Delta\phi-\phi\Delta g)+\int_{\C\sm S}(g^S\Delta\phi-\phi\Delta g^S)=  \int\phi \NN(g^S)ds.$$
But the expression in the left hand side is
$$\int(g\pa_n\phi-\phi\pa_{n} g)ds+\int(g\pa_{n*}\phi-\phi\pa_{n*} g^S)ds=-\int(\phi\pa_{n} g+\phi\pa_{n*} g^S)ds=\int\phi~\NN(g^S)ds,$$
and \eqref{E1.1} is proved.

\subsection{Main results} We have the following results.

\begin{thm} \label{MT1} For all test functions $f\in C^\infty_0(\C)$, the limit $$\nu(f):=\lim_{n\to\infty}  \nu_n(f)$$
exists, and
$$ \nu(f)=\frac1{8\pi}\left[\int_S\Delta f+\int_S f\Delta L+\int_{\pa S} f~\NN( L^S)~ds\right].$$
\end{thm}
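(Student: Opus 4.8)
The plan is to express $\nu_n(f)$ through the correlation kernel and then to extract its asymptotics using a Ward identity together with known local asymptotics of $\mathbf{K}_n$ near the bulk and near the boundary $\partial S$. Recall $\nu_n(f)=n\int_\C f\,(u_n-u)$ with $u_n(\lambda)=\tfrac1n\mathbf{K}_n(\lambda,\lambda)$ and $u=\tfrac1{2\pi}\Delta Q\cdot\1_S$. The starting point is the \emph{Ward identity} for the determinantal process $\Psi_n$: differentiating the partition function $Z_n$ under a compactly supported vector field $v$ (reparametrization invariance) produces, for each test vector field, an exact identity relating $\mathbf{E}_n$ of $\sum_j(\partial v)(\lambda_j)$, the field $\sum_{j\ne k}v(\lambda_j)/(\lambda_j-\lambda_k)$, and $n\sum_j (\partial Q)(\lambda_j)v(\lambda_j)$. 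Rewriting the two-point contribution via the kernel $\mathbf{K}_n$ and introducing the Berezin-type density, one converts this into a single scalar relation for $u_n$ of the form $\bar\partial(\text{something involving }u_n)= \text{explicit} + (\text{error governed by the one-point function near }\partial S)$. This is the engine of the whole computation.

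The next step is to feed in the precise local behaviour of $\mathbf{K}_n$. In the interior of $S$, to the relevant order $u_n(\lambda)=\tfrac1{2\pi}\Delta Q(\lambda)+\tfrac1{8\pi n}\Delta L(\lambda)+o(1/n)$ — this is the standard bulk expansion of the Bergman-type kernel $K_n$ for the weight $e^{-2nQ}$ (the $\tfrac1{8\pi n}\Delta L$ term is the first correction, coming from the asymptotic expansion of the reproducing kernel of $\PP_n(e^{-2nQ})$ on the diagonal). Integrating $n(u_n-u)$ against $f$ over the interior therefore contributes $\tfrac1{8\pi}\int_S f\,\Delta L$. Outside a shrinking neighbourhood of $\partial S$ the density $u_n$ is exponentially small, so it contributes nothing. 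The remaining mass of $\nu_n(f)$ is concentrated in an $O(1/\sqrt n)$-tube around $\partial S$, and here one uses the boundary (error-function / Faddeeva) asymptotics of $\mathbf{K}_n$: after rescaling normal coordinates by $\sqrt n$, $u_n$ converges to a universal profile whose deviation from $\tfrac1{2\pi}\Delta Q$, when integrated across the tube, yields a distribution supported on $\partial S$. Identifying that limiting boundary distribution with $\tfrac1{8\pi}\big(\NN(L^S)\,ds$ plus the term that combines with the interior Laplacian$\big)$ is the delicate part; one organizes it by writing $\int_S\Delta f+\int_S f\Delta L+\int_{\partial S} f\,\NN(L^S)\,ds = \int_\C \Delta(f\cdot\text{something}) $-type identities using \eqref{E1.1} applied to $g=L$, namely $\Delta L^S=\Delta L\cdot\1_\Omega+\NN(L^S)\,ds$, which is exactly why the statement is phrased with $L^S$.

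Concretely I would (i) derive the scalar Ward identity for $u_n$; (ii) test it against $\bar\partial$ of a solution to a $\bar\partial$-problem with datum built from $f$, so that the left side becomes $\int f\,(u_n - \tfrac1{2\pi}\Delta Q\cdot\1_S)$ up to boundary terms; (iii) insert the bulk expansion to produce $\tfrac1{8\pi}\int_S f\,\Delta L$ and, separately, $\tfrac1{8\pi}\int_S\Delta f$ (the latter arising from the regularization of the $\bar\partial$-inversion, i.e. from moving $\Delta$ off $f$); (iv) handle the boundary layer by the rescaled kernel asymptotics to obtain $\tfrac1{8\pi}\int_{\partial S} f\,\NN(L^S)\,ds$; (v) assemble using \eqref{E1.1}. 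I expect the main obstacle to be step (iv): controlling the boundary-layer contribution rigorously — proving that the rescaled one-point function converges with enough uniformity (and with an integrable error in the normal variable) that the cross-tube integral converges to the claimed $\partial S$-distribution, and matching the universal constant $\tfrac1{8\pi}$. This requires the real-analyticity hypotheses (A1)–(A3): (A3) gives a smooth curve along which to set up normal/tangential coordinates, (A2) ($\Delta Q\ne 0$) guarantees the Gaussian scaling rate $\sqrt n$ is correct and the profile nondegenerate, and (A1) provides the off-diagonal decay of $\mathbf{K}_n$ and the validity of the full asymptotic expansion. The interior part, by contrast, is essentially a bookkeeping exercise once the diagonal expansion of $K_n$ is granted.
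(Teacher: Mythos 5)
Your overall framing (Ward identity plus kernel asymptotics) points in the right direction, but the route you actually propose diverges from the paper's in a way that leaves a genuine gap. The paper uses the Ward identity as the \emph{main engine}: it chooses a specific vector field $v=\frac{\bp f}{\pa\bp Q}\1_S+\frac{f_0}{\pa Q-\pa\check Q}\1_{\C\sm S}$ (after decomposing $f=f_++f_-+f_0$ with $f_\pm$ harmonic-type outside $S$ and $f_0$ vanishing on $\pa S$) so that $v\,\pa\bp Q+\bp v\,(\pa Q-\pa\check Q)=\bp f$ off $\pa S$; the limit Ward identity then gives $\nu(f)=\tfrac14\sigma(\pa v)$ directly, and the terms $\tfrac1{8\pi}\int_S f\Delta L$ and $\tfrac1{8\pi}\int_{\pa S}f\,\NN(L^S)\,ds$ come out of the purely potential-theoretic computation of $\sigma(\pa v)$ (integration by parts and the identity \eqref{E1.1} for $L^S$). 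The only kernel inputs are the crude exterior bound of Lemma \ref{LemExt} and the bulk estimate of Theorem \ref{ThmK}, which controls $\K_n$ only up to an additive $O(1)$ error and only at distance $\gtrsim\delta_n$ from $\pa S$; these are used solely to kill the error terms $\eps_n^1,\eps_n^2$, not to compute anything.

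Your steps (iii) and (iv), by contrast, require much stronger asymptotics than the paper establishes or than you can take as ``standard.'' For (iii): Theorem \ref{ThmK} gives $u_n=\tfrac1{2\pi}\Delta Q+O(1/n)$ in the bulk, which is exactly \emph{not} enough to evaluate $n\int f(u_n-u)$; you would need the precise second-order diagonal expansion $u_n=\tfrac1{2\pi}\Delta Q+\tfrac1{8\pi n}\Delta L+o(1/n)$ with an error uniform enough to integrate, and this is a substantial result you are assuming rather than proving. For (iv): the error-function boundary scaling limit of the one-point function for a general real-analytic $Q$ satisfying (A1)--(A3) is not proved in this paper and was not available at the time; you correctly identify this as ``the delicate part'' but offer no argument for it, and without it the entire boundary contribution $\tfrac1{8\pi}\bigl[\int_S\Delta f+\int_{\pa S}f\,\NN(L^S)\,ds\bigr]$ (which as a distribution is $\tfrac1{8\pi}\Delta\1_S$ plus the jump part of $\tfrac1{8\pi}\Delta L^S$, both supported on $\pa S$) cannot be computed. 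So as written the proposal does not close: the whole point of the paper's Ward-identity strategy is to \emph{avoid} needing the boundary-layer asymptotics you are relying on. If you want to salvage your route, you must either prove the boundary scaling limit with integrable uniform errors across the $n^{-1/2}$-tube, or replace steps (iii)--(iv) by the paper's choice of test vector field, which converts the problem into the elementary computation of $\sigma(\pa v)$.
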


\ms
\no Equivalently,  we have
$$\nu_n\to\nu=\frac1{8\pi}\Delta\left(1_S +L^S\right)$$
in the sense of distributions.

\ms

\begin{thm} \label{MT2} Let $h\in C^\infty_0(\C)$ be a real-valued test function. Then, as $n\to\infty$,
$$\trace_n h-\E_n\trace_n h\;\to\; N\left(0,\frac1{2\pi}\int_{\C}\left|\nabla h^S \right|^2\right).$$
\end{thm}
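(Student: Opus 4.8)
The plan is to deduce the central limit theorem for $\trace_n h-\E_n\trace_n h$ from a more precise asymptotic expansion of the cumulant generating function. Fix a real test function $h$ and consider, for real parameters $t$ in a fixed interval, the perturbed potential $Q_t=Q-\tfrac t{2n}h$, which for $n$ large still satisfies (A1)--(A3) with the \emph{same} droplet $S$ (since the subleading perturbation does not move the droplet). Writing $Z_n(t)$ for the partition function of the $Q_t$-ensemble, a standard computation gives
\begin{equation*}
\E_n\!\left[e^{t(\trace_n h-\E_n\trace_n h)}\right]=e^{-t\E_n\trace_n h}\,\frac{Z_n(t)}{Z_n(0)},
\end{equation*}
so the task reduces to proving that $\log\bigl(Z_n(t)/Z_n(0)\bigr)-t\,\E_n\trace_n h$ converges, as $n\to\infty$, to $\tfrac{t^2}{4\pi}\int_\C|\nabla h^S|^2$, uniformly for $t$ in a neighbourhood of $0$; convergence of the Laplace transform to that of a Gaussian then yields the claim.

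First I would differentiate in $t$: $\tfrac{d}{dt}\log Z_n(t)=\E_{n,t}\trace_n h$, where $\E_{n,t}$ is expectation in the $Q_t$-ensemble. Thus
\begin{equation*}
\log\frac{Z_n(t)}{Z_n(0)}=\int_0^t \E_{n,s}\bigl[\trace_n h\bigr]\,ds,
\end{equation*}
and the game becomes an asymptotic analysis of the one-point function for the perturbed ensemble. Here Theorem \ref{MT1} applied to $Q_s$ is the crucial input: it gives
\begin{equation*}
\E_{n,s}\trace_n h=n\sigma_s(h)+\nu_s(h)+o(1),
\end{equation*}
where $\sigma_s,\nu_s$ are the equilibrium measure and the fluctuation correction for $Q_s$. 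Because the droplet is unchanged, one expands $\sigma_s(h)$ and $L_s=\log\Delta Q_s=\log(\Delta Q-\tfrac s{2n}\Delta h)=L-\tfrac s{2n}\tfrac{\Delta h}{\Delta Q}+O(n^{-2})$ in powers of $1/n$. The order-$n$ term $n\sigma_s(h)$ must combine with $-t\,\E_n\trace_n h$ so that the leading and $n$-independent non-quadratic pieces cancel, leaving precisely the quadratic-in-$t$ remainder. Carrying this out, the $O(1)$ surviving term is an integral against $\tfrac1{8\pi}\Delta h^S$ (by the divergence-form identity \eqref{E1.1}), and integrating $s$ from $0$ to $t$ produces the factor $t^2/2$; after an integration by parts, $\int_S h\,\Delta h^S+\int_{\pa S}h\,\NN(h^S)\,ds=\int_\C h^S\Delta h^S=-\int_\C|\nabla h^S|^2$, giving the stated variance $\tfrac1{2\pi}\int_\C|\nabla h^S|^2$ (the sign working out because $h^S$ is harmonic off $S$ and the boundary terms assemble into the Dirichlet energy).

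The main obstacle is the \emph{uniformity} and \emph{error control} in the expansion of $\E_{n,s}\trace_n h$: Theorem \ref{MT1} as stated is a fixed-potential statement with an unquantified $o(1)$, whereas here $Q_s$ varies with $n$ (through the $\tfrac1n$-perturbation) and one needs the remainder to be $o(1)$ uniformly in $s\in[0,t]$ and locally uniformly in $t$. One must therefore either revisit the proof of Theorem \ref{MT1} to extract such uniformity — checking that the constants in the asymptotic analysis of $K_{n}$ (Bergman-kernel expansion near the bulk and near $\pa S$) depend only on bounds for finitely many derivatives of the potential, which hold uniformly along the family $Q_s$ — or, alternatively, differentiate once more in $t$ and identify $\tfrac{d^2}{dt^2}\log Z_n(t)=\Var_{n,t}(\trace_n h)$, reducing matters to showing the variance converges to $\tfrac1{2\pi}\int_\C|\nabla h^S|^2$ uniformly in $t$; the latter is a linear-statistics variance asymptotic that can be attacked directly via the two-point function and the same kernel expansions. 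A secondary technical point is justifying the interchange of limit and integral in $\int_0^t(\cdots)\,ds$ and the passage from convergence of Laplace transforms on a real interval to convergence in distribution, which is routine once the uniform bound is in hand (e.g. via tightness from a uniform bound on the variance). I expect the kernel-expansion uniformity to be where the real work lies.
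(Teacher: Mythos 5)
Your reduction is exactly the paper's (and Johansson's): tilt the measure by $e^{t\Tr_n h}$, identify $\frac{d}{dt}\log Z_n(t)$ with the expectation of $\Tr_n h$ under the perturbed ensemble, integrate in $t$ (using convexity of the cumulant generating function to dominate the convergence), and conclude by convergence of Laplace transforms. That skeleton is sound and matches the paper's derivation of Theorem \ref{MT2}. The gap is in the step you yourself flag as ``where the real work lies'', and it is not merely a question of uniformity of constants. Writing $\E_{n,s}\Tr_n h=n\sigma_s(h)+\nu_s(h)+o(1)$ by ``applying Theorem \ref{MT1} to $Q_s$'' presupposes a version of Theorem \ref{MT1} valid for an $n$-dependent family of potentials with controlled error; establishing exactly that statement is the content of the paper's Theorem \ref{MT3} (equivalently, the main formula \eqref{mainformula}), whose proof inserts the perturbation into the Ward identity, where it enters exactly and linearly as the term $2\Tr_n[v\pa h]$, and then controls the error terms through the kernel estimates of Section \ref{Sec3}. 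Your proposal defers all of this content.

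More seriously, the expansion you sketch would not produce the correct limit even formally. You assert that the droplet is unchanged and expand only $L_s=\log\Delta Q_s$; but the resulting difference $\nu_s(h)-\nu(h)$ is $O(1/n)$ and vanishes in the limit. The entire $O(1)$ contribution to $\E_{n,s}\Tr_n h-\E_n\Tr_n h$ must come from $n(\sigma_s-\sigma)(h)$, i.e.\ from the first (Hadamard) variation of the equilibrium measure under $Q\mapsto Q-\eps h$, which is $\dot\sigma=-\frac1{2\pi}\Delta(h^S)$ as a distribution: by \eqref{E1.1} this consists of the bulk density change $-\frac1{2\pi}\Delta h\cdot 1_S$ \emph{plus} a boundary term $-\frac1{2\pi}\NN(h^S)\,ds$ produced by the order-$1/n$ motion of $\pa S$, which survives after multiplication by $n$. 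It is precisely this boundary term that generates the exterior Dirichlet energy $\int_{\C\sm S}|\nabla h^S|^2$ in the variance; freezing the droplet and perturbing only the density yields interior terms alone. So you must either prove and quantify this variational formula uniformly in $n$, or follow the paper's route through the perturbed Ward identity (where $h^S$ emerges from the choice of vector field $v$ in Section \ref{Sec4}). As written, the proposal supplies neither, and the constants (the interplay between the tilt $e^{t\Tr_n h}$, the normalization $2nQ$ in the Hamiltonian, and the factor in $Q_t=Q-\frac{t}{2n}h$) are asserted rather than tracked.
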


\ms\no The last statement means convergence in distribution of random variables to a normal law with indicated expectation and variance. As noted in \cite{AHM2}, Section 7, the result can be restated in terms of convergence of random fields to a Gaussian
field on $S$ with free boundary conditions.

\subsection{Derivation of Theorem \ref{MT2}} We now show, using the variational approach due to Johansson \cite{J}, that the Gaussian convergence stated in Theorem \ref{MT2} follows from a generalized version of Theorem \ref{MT1}, which we now state.

Fix  a real-valued test function  $h$ and  consider the perturbed potentials
$$ \tilde Q_n:=Q-\frac1n h.$$
We denote by $\tilde u_n$  the one-point function of the
density of states $\tilde \Prob_n$ associated with the potential  $\tilde Q_n$. We write $\ti\sigma_n$ for the measure
with density $\ti u_n$ and
$\tilde\nu_n$ for the measure $n(\tilde \sigma_n-\sigma)$, i.e.,
\begin{equation}\label{E1.35}\tilde{\nu}_n[f]=n\ti\sigma_n(f)-n\sigma(f)=\tilde{\E}_n~\Tr_n[f]-n\sigma(f).\end{equation}

\ms\begin{thm} \label{MT3} For all $f\in C^\infty_0(\C)$ we have
$$\tilde \nu_n(f)-\nu_n(f) \to \frac1{2\pi}\int_\C\nabla f^S\cdot\nabla h^S.$$
\end{thm}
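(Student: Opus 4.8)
The plan is to compare the two ensembles via a standard free-energy (partition-function) argument. Write $F_n(Q)=\frac{1}{n^2}\log Z_n(Q)$ for the (normalized) log partition function of the ensemble with potential $Q$, and consider the one-parameter family $Q_t := Q - \frac{t}{n}h$ for $t\in[0,1]$, so that $\tilde Q_n = Q_1$. The first step is the differentiation identity: since $Z_n(Q_t)=\int_{\C^n}\exp\bigl(-\sum_{j\ne k}\log|\lambda_j-\lambda_k|^{-1} - 2n\sum_j Q_t(\lambda_j)\bigr)\,dA_n$, one computes
$$\frac{d}{dt}\log Z_n(Q_t) = 2\,\E_{n,t}\,\Tr_n[h] = 2\bigl(\tilde\nu_{n,t}(h) + n\sigma(h)\bigr),$$
where $\E_{n,t}$ and $\tilde\nu_{n,t}$ refer to the ensemble with potential $Q_t$. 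The same computation with $h$ replaced by an arbitrary test function $f$ (perturbing in a second independent direction $s$) and a mixed second derivative gives the symmetric relation $\partial_s\big|_{0}\,\tilde\nu_{n,t}[f]_{(t\text{-ensemble, }s f\text{-perturbation})} = \partial_t\,(\text{analogous object})$; more concretely the object we actually want, $\tilde\nu_n(f)-\nu_n(f)$, is obtained by integrating in $t$ from $0$ to $1$ a derivative which, by the symmetry of mixed partials of $\log Z_n$, equals the derivative in the $h$-direction of the $f$-linear-statistic expectation. The upshot is the exact identity
$$\tilde\nu_n(f)-\nu_n(f) = \int_0^1 \Bigl(\tfrac{d}{dt}\,\tilde\nu_{n,t}[f]\Bigr)\,dt,$$
and one wants to show the integrand converges, uniformly in $t\in[0,1]$, to $\frac{1}{2\pi}\int_\C \nabla f^S\cdot\nabla h^S$.

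The second step is to recognize that for each fixed $t$, the ensemble with potential $Q_t$ is again an admissible RNM-ensemble, and crucially it has the \emph{same droplet} $S$ and the same equilibrium measure $\sigma$ as $Q$ (the $O(1/n)$ perturbation does not move the droplet, since the obstacle-problem characterization of $S$ depends only on $Q$ itself, not on $n$-dependent lower-order terms); moreover $\Delta Q_t = \Delta Q + O(1/n)$, so assumptions (A1)--(A3) persist and $L_t = \log\Delta Q_t = L + O(1/n)$ in $C^\omega$ near $S$. Therefore the (yet-to-be-proved) Theorem \ref{MT1}, applied to the $Q_t$-ensemble, gives $\tilde\nu_{n,t}(f)\to \nu_t(f) = \frac{1}{8\pi}[\int_S\Delta f + \int_S f\,\Delta L_t + \int_{\pa S} f\,\NN(L_t^S)\,ds]$, and we will want a version of Theorem \ref{MT1} with enough uniformity to differentiate in $t$. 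Differentiating $\nu_t(f)$ in $t$ at the level of the limit: only the $L_t$-dependent terms survive, and $\frac{d}{dt}L_t$ at $t=0$ (after accounting for the $1/n$ scaling and the $t$-integration) is governed by $\frac{\delta}{\delta Q}\log\Delta Q$ applied to $h$, i.e. by $\Delta h/\Delta Q$. Pushing this through the formula and using the definition of $\NN(\cdot^S)$ via \eqref{E1.1}, the $t$-derivative of the limiting expression collapses to $\frac{1}{2\pi}\int_\C\nabla f^S\cdot\nabla h^S$ after an integration by parts (the boundary term from $\NN(L^S)$ combines with the bulk term to reassemble $\int_\C \nabla f^S\cdot\nabla h^S$ via the identity $\Delta(g^S) = \Delta g\,1_\Omega + \NN(g^S)\,ds$ applied to $g=h^S$, noting $\Delta h^S = \Delta h\,1_S + \NN(h^S)\,ds$ and that $h^S$ is harmonic off $S$).

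The main obstacle — and the reason this is stated as a separate theorem rather than a one-line corollary — is the \emph{uniformity in $t$} of the convergence in Theorem \ref{MT1}, together with the legitimacy of interchanging $\frac{d}{dt}$ with $\lim_{n\to\infty}$. Concretely, one needs the error term in $\tilde\nu_{n,t}(f) = \nu_t(f) + o(1)$ to be $o(1)$ uniformly for $t\in[0,1]$ and, better, one needs control of $\frac{d}{dt}\tilde\nu_{n,t}(f)$ itself (which is a connected two-point quantity: $\frac{d}{dt}\tilde\nu_{n,t}[f] = 2n\,\Cov_{n,t}(\Tr_n[f],\Tr_n[h])$ up to the scaling). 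So in fact Theorem \ref{MT3} is really a statement about the limiting \emph{covariance} structure, and the cleanest route is: (i) prove Theorem \ref{MT1} with explicit, $t$-uniform error bounds by carrying the Ward-identity analysis with the perturbed potential $Q_t$ throughout (the Ward identities, bulk and edge asymptotics of $K_{n,t}$, etc., all depend on $t$ only through the $C^\omega$-bounded data $Q_t$, so the estimates are uniform); (ii) invoke the bound on $\Cov_{n,t}$ (of order $1/n$, with the stated limit) that emerges from the same analysis to justify differentiating under the limit; (iii) integrate in $t$. The polarization identity $2\nabla f^S\cdot\nabla h^S = |\nabla(f+h)^S|^2 - |\nabla f^S|^2 - |\nabla h^S|^2$ shows this is exactly the bilinear form whose diagonal is the variance in Theorem \ref{MT2}, which is the consistency check one should perform at the end; and the derivation of Theorem \ref{MT2} from Theorem \ref{MT3} is then Johansson's standard argument: the cumulant-generating function $\E_n\exp(\trace_n h - \E_n\trace_n h)$ is, via the free-energy identity, $\exp\bigl(\int_0^1(1-t)\frac{d}{dt}\E_{n,t}\Tr_n[h]\,dt + o(1)\bigr)$, and Theorem \ref{MT3} with $f=h$ identifies the exponent as $\frac{1}{4\pi}\int_\C|\nabla h^S|^2$, giving the Gaussian.
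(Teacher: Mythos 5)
There is a genuine gap, and it sits exactly at the step where the limit is supposed to be computed. Your interpolation identity
$\tilde\nu_n(f)-\nu_n(f)=\int_0^1 2\,\Cov_{n,t}\bigl(\Tr_n[f],\Tr_n[h]\bigr)\,dt$
is correct, but it is content-free for the purpose at hand: it merely restates the problem as "compute the limit of the averaged covariance, uniformly in $t$," which is equivalent in difficulty to Theorem \ref{MT3} itself. (In the paper this variational identity is used in the opposite direction, to derive Theorem \ref{MT2} \emph{from} Theorem \ref{MT3}; see the proof of the Claim following \eqref{E1.35}.) Your fallback, "carry the Ward-identity analysis with the perturbed potential throughout," is a pointer to the actual proof rather than an argument.

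More seriously, the mechanism you propose for evaluating the limit is wrong. You suggest applying Theorem \ref{MT1} to the potential $Q_t=Q-th/n$ and differentiating the limiting formula through its dependence on $L_t=\log\Delta Q_t$. But $L_t=L+O(1/n)$, so the resulting change in the $\frac1{8\pi}\bigl[\int_S f\Delta L_t+\int_{\pa S}f\,\NN(L_t^S)\,ds\bigr]$ correction is $O(1/n)$ and vanishes in the limit; it cannot produce the $O(1)$ term $\frac1{2\pi}\int_\C\nabla f^S\cdot\nabla h^S$. That term does not come from the subleading correction at all: since $\tilde\nu_n$ is defined as $n(\tilde\sigma_n-\sigma)$ with the \emph{unperturbed} equilibrium measure $\sigma$, the relevant contribution is the first-order shift of the \emph{leading} term, i.e.\ $n\bigl(\sigma_{Q-h/n}-\sigma_Q\bigr)(f)$, which is of size $n\cdot O(1/n)=O(1)$ and equals the derivative of the equilibrium measure in the direction $h$ (a balayage/Dirichlet-form quantity). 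In the paper's proof this enters not through $L$ but through the extra term $2\Tr_n[v\,\pa h]$ in the perturbed Ward identity \eqref{E2.2}: after passing to the limit form (Corollary \ref{Cor2.5}) and choosing the vector field $v$ adapted to the decomposition $f=f_++f_-+f_0$, the entire $h$-dependence of $\tilde\nu(f)$ is the term $\sigma(v\,\pa h)$ (plus its conjugate piece), which is then computed in Section \ref{Sec4}(c) to be $\frac1{2\pi}\int_\C\nabla f^S\cdot\nabla h^S$. Your proposal identifies the wrong source for this term and supplies no independent computation of it, so the proof does not go through as written.
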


A proof of Theorem \ref{MT3} is given in Section \ref{Sec4}.

\newpage

\begin{claim} \label{ML1} Theorem \ref{MT2} is a consequence of Theorem \ref{MT3}.
\end{claim}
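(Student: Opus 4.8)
\emph{Proof plan for the Claim.} The plan is to carry out Johansson's variational argument \cite{J}. I will show that the Laplace transform of the centered linear statistic
$$X_n:=\trace_n h-\E_n\trace_n h$$
converges, for every $t\in\R$, to that of the Gaussian law of Theorem \ref{MT2}; since this limit is the Laplace transform of a probability distribution and $X_n$ has an everywhere-finite Laplace transform, Curtiss' theorem (equivalently, the method of moments) then forces convergence in distribution. The point of departure is the elementary identity turning a tilt of $\Prob_n$ into a genuine perturbation of the potential: absorbing $-t\sum_j h(\lambda_j)$ into the term $2n\sum_j Q(\lambda_j)$ of the Hamiltonian $H_n$ gives
$$\E_n\bigl[e^{t\trace_n h}\bigr]=\frac{Z_n[\,Q-\tfrac{t}{2n}h\,]}{Z_n[\,Q\,]},\qquad t\in\R .$$
Because $h$ is compactly supported, $Q-\tfrac{t}{2n}h$ still obeys \eqref{E11}, so all partition functions here are finite, and for $t$ in a bounded interval these perturbations are of the form to which Theorem \ref{MT3} applies (base potential $Q$, $C^\infty$ perturbing function).

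Next I would set
$$L_n(t):=\log\E_n\bigl[e^{tX_n}\bigr]=\log Z_n[\,Q-\tfrac{t}{2n}h\,]-\log Z_n[\,Q\,]-t\,\E_n\trace_n h,$$
so that $L_n(0)=0$, and differentiate in $t$ under the integral sign---legitimate since the integrand defining $Z_n[\,Q-\tfrac{t}{2n}h\,]$, and its $t$-derivatives, are dominated uniformly for $t$ in a compact interval by an integrable function, again by \eqref{E11}. This gives $\tfrac{d}{dt}\log Z_n[\,Q-\tfrac{t}{2n}h\,]=\E^{(t)}_n[\trace_n h]=n\sigma^{(t)}_n(h)$, where $\E^{(t)}_n$ is expectation with respect to the density of states for $Q-\tfrac{t}{2n}h$ and $\sigma^{(t)}_n$ is the measure with density equal to its one-point function. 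Subtracting $\E_n\trace_n h=n\sigma_n(h)$ and inserting $\pm\, n\sigma(h)$,
$$L_n'(t)=\bigl(n\sigma^{(t)}_n(h)-n\sigma(h)\bigr)-\bigl(n\sigma_n(h)-n\sigma(h)\bigr)=\tilde\nu^{(t)}_n(h)-\nu_n(h),$$
where $\tilde\nu^{(t)}_n$ is precisely the measure \eqref{E1.35} attached to the perturbation $\tilde Q_n=Q-\tfrac1n\bigl(\tfrac t2 h\bigr)$. Applying Theorem \ref{MT3} with test function $h$ and perturbing function $\tfrac t2 h$ then yields, for each fixed $t$,
$$L_n'(t)\ \longrightarrow\ \frac1{2\pi}\int_\C\nabla h^S\cdot\nabla\bigl(\tfrac t2 h\bigr)^S\qquad(n\to\infty),$$
a fixed multiple of $t$. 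Integrating in $t$ from $0$ (using $L_n(0)=0$) turns this into a quadratic in $t$; matching that quadratic with $\tfrac12\Var(Z)\,t^2$, the logarithm of the Laplace transform of a centered Gaussian $Z$, identifies $\Var(Z)$ with the variance appearing in Theorem \ref{MT2}, so that $X_n$ converges in distribution to $Z$.

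The step that genuinely needs work is the interchange of $\lim_{n\to\infty}$ with $\int_0^t$ in the previous paragraph. For this it suffices to have the convergence $L_n'(t)\to\tfrac1{2\pi}\int_\C\nabla h^S\cdot\nabla(\tfrac t2 h)^S$ with enough uniformity for $t$ in compact intervals---concretely, a bound $\sup_{\,n\ge n_0,\ |t|\le T}|L_n'(t)|=\sup|\tilde\nu^{(t)}_n(h)-\nu_n(h)|<\infty$ for every $T$---after which dominated convergence closes the argument. As $\{\tfrac t2 h:\ |t|\le T\}$ is a bounded subset of $\coity(\C)$, this uniform control should come out of the Ward-identity estimates that go into the proof of Theorem \ref{MT3}; granting it, the remaining ingredients---the tilted-partition-function identity, differentiation under the integral, and the appeal to Curtiss' theorem---are routine.
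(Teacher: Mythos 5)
Your plan follows the same route as the paper: Johansson's variational argument via the cumulant generating function $F_n(\lambda)=\log\E_n e^{\lambda X_n}$, the identification of $F_n'(\lambda)$ as the expectation of $X_n$ under a tilted ensemble which is again an RNM ensemble with perturbed potential, pointwise convergence of $F_n'$ from Theorem \ref{MT3}, integration in the tilt parameter, and convergence of the moment generating function. The one step you leave open --- interchanging $\lim_{n\to\infty}$ with $\int_0^t$ --- is, however, the only step with real content, and as written your argument has a genuine gap exactly there: Theorem \ref{MT3} is stated for a fixed perturbing function, and nothing you have granted gives the bound $\sup_{n,|s|\le T}|\tilde\nu_n^{(s)}(h)-\nu_n(h)|<\infty$. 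The paper closes this gap without any appeal to uniformity in the Ward-identity estimates, by observing that $F_n''(\lambda)=\tilde\E_{n,\lambda}X_n^2-(\tilde\E_{n,\lambda}X_n)^2\ge 0$: each $F_n$ is convex, so $F_n'$ is nondecreasing and $0=F_n'(0)\le F_n'(\lambda)\le F_n'(1)=a_n$ on $[0,1]$, while $a_n$ converges by a \emph{single} application of Theorem \ref{MT3} at the endpoint. This sandwiching gives the domination for free (for each fixed $t$, after replacing $h$ by $th$). You should substitute this convexity argument for your appeal to "enough uniformity"; the route you sketch would require reopening the proof of Theorem \ref{MT3}.

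A second point to resolve before calling the final matching "routine": your tilt identity $\E_n[e^{t\Tr_n h}]=Z_n[Q-\tfrac{t}{2n}h]/Z_n[Q]$ is the correct one for the Hamiltonian of Section \ref{Sec1} (whose external-field term is $2n\sum_j Q(\lambda_j)$), and it leads you to apply Theorem \ref{MT3} with perturbing function $\tfrac{t}{2}h$; integrating then gives $\log\E_n e^{tX_n}\to \tfrac{t^2}{8\pi}\int|\nabla h^S|^2$, i.e.\ a limiting variance $\tfrac1{4\pi}\int|\nabla h^S|^2$, which is \emph{half} the constant in Theorem \ref{MT2}. The paper's proof instead declares the $e^{\lambda X_n}$-tilted ensemble to be the one with potential $Q-\lambda h/n$ (no $\tfrac12$), which is what is needed to land on the stated variance but is consistent only with an external-field term $n\sum_j Q(\lambda_j)$. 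So there is a factor-of-two normalization tension between the statement of Theorem \ref{MT2} and the definition of $H_n$; your bookkeeping is the internally consistent one, but it does not reproduce the constant as stated, and you should flag and resolve this rather than assert that the quadratic "matches".
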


\begin{proof} Denote $X_n=\Tr_n h-\E_n\Tr_n h$ and write $a_n=\tilde \E_n X_n$. By Theorem \ref{MT3},
$$a_n\to a\quad \text{where}\quad a= \frac1{2\pi}\int_\C\nabla h^S\cdot\nabla h^S.$$
More generally, let $\lambda\ge 0$ be a parameter, and let $\tilde\E_{n,\lambda}$ denote expectation corresponding to the potential $Q-(\lambda h)/n$.
Write
$$F_n(\lambda):=\log\E_n ~e^{\lambda X_n},\qquad 0\le \lambda\le 1.$$
Since $F_n^\prime(\lambda)=\tilde\E_{n,\lambda} X_n$,
Theorem \ref{MT3} implies
\begin{equation}\label{E1.2}F'_n(\lambda)=\tilde\E_{n,\lambda} X_n\to \lambda a,\end{equation}
and
\begin{equation}\label{E1.3}\log\E_n e^{X_n}=F_n(1)=\int_0^1F'_n(\lambda)~d\lambda~\to~ \frac a2,\qquad \text{as}\quad n\to\infty.
\end{equation}
Here we use the convexity of the functions $F_n$,
$$F''_n(\lambda)=\tilde\E_{n,\lambda} X_n^2-\left(\tilde\E_{n,\lambda} X_n\right)^2\ge0,$$
which implies that the  convergence in \eqref{E1.2} is dominated:
$$0=F_n'(0)\le  F'_n(\lambda)\le F'_n(1).$$
Replacing $h$ by $t h$ where $t\in\R$, we get $\mathbf{E}_n(e^{t X_n})\to e^{t^2a/2}$ as $n\to\infty$, i.e.,
we have convergence of all moments of $X_n$ to the moments of the normal $N(0,a)$ distribution. It is well known
that this implies convergence in distribution, viz. Theorem \ref{MT2} follows.
\end{proof}

\subsection{Comments}

\subsubsection*{(a) Related Work} The one-dimensional analog
of the weighted RNM theory is the more well-known random Hermitian matrix theory, which was studied by Johansson in the important paper
\cite{J}. Indeed, Johansson obtains results not only random Hermitian matrix ensembles, but for more general (one-dimensional) $\beta$-ensembles. The paper \cite{J} was one of our main sources of inspiration for the present work.

In \cite{AHM2}, it was shown that the convergence in theorems \ref{MT1} and \ref{MT2} holds for test functions supported in the interior of the droplet. See also \cite{B2}.
In \cite{AHM2}, we also announced theorems \ref{MT1} and \ref{MT2} and proved several consequences of them, e.g. the convergence of Berezin measures, rooted at a point in the exterior of $S$, to a harmonic measure.

Rider and Virág \cite{RV} proved theorems \ref{MT1} and \ref{MT2} in the special case $Q(z)=\babs{z}^2$ (the \textit{Ginibre ensemble}). The paper \cite{BS} contains results in this direction for $\beta$-Ginibre ensembles for some special values of $\beta$.

Our main technique, the method of Ward identities, is common practice in field theories. In this method, one uses reparametrization invariance
of the partition function to deduce exact relations satisfied by the joint intensity functions of the ensemble.
In particular, the method was applied on the physical level by Wiegmann, Zabrodin et al. to study RNM ensembles as well as more
general OCP ensembles. See e.g. the papers \cite{WZ1}, \cite{WZ2}, \cite{WZ3}, \cite{Z}. A one-dimensional version of Ward's identity
was also used by Johansson in \cite{J}.

Finally, we wish to mention that one of the topics in this paper, the behaviour of fluctuations near the boundary, is analyzed from another perspective in the forthcoming paper \cite{AKM}.

\subsubsection*{(b) Assumptions on the potential} We here comment on the assumptions (A1)--(A3) which we require of the potential $Q$.

The $C^\omega$ assumption (A1) is natural for the study of fluctuation properties near the boundary of the droplet. (For test functions supported in the interior, one can do with less regularity.)

Using Sakai's theory \cite{S}, it can be shown that conditions (A1) and (A2) imply that $\d S$ is a union of  finitely many
$C^\omega$ curves with a finite number of singularities of known types. It is not difficult to
complete a proof using arguments
from \cite{HS}, Section 4.

We rule out singularities by the regularity assumption in (A3). What happens in the presence of singularities is probably an interesting topic, which we have not approached.

Without singularities the boundary of the droplet is a union of finitely many $C^\omega$ Jordan curves. Assumption (A3) means that we only consider the case of a single boundary component. Our methods extend without difficulty to the case of a multiply connected droplet. The disconnected case requires further analysis, and is not considered in this paper.

\subsubsection*{(c) Droplets and potential theory}
We here state the properties of droplets that will be needed for our analysis. Proofs for these properties can be found in
\cite{ST} and \cite{HM}.

We will write
$\check Q$ for the maximal subharmonic function $\le Q$ which grows as $\log|z|+O(1)$ when $\babs{z}\to\infty$.
We have that
$\check{Q}=Q$ on $S$ while $\check Q$ is $C^{1,1}$-smooth on $\C$ and
$$\check Q(z)=Q^S(z)+G(z,\infty),\qquad z\in \C\setminus S,$$
where $G$ is the classical Green's function of $\C\setminus S$. In particular, if
$$U^\sigma(z)=\int\log\frac1{|z-\zeta|}{d\sigma(\zeta)}$$
denotes the logarithmic potential of the equilibrium measure, then
\begin{equation}\label{qhe}\check Q+U^\sigma\equiv\const\end{equation}

The following proposition sums up some basic properties of the droplet and the function $\check{Q}$.

\begin{prop} \label{Prop1} Suppose $Q$ satisfies (A1)--(A3). Then $\pa S$ is a $C^\omega$ Jordan curve, $\check Q\in W^{2,\infty}(\C)$, and
therefore $$\pa \check Q=(\pa Q)^S.$$
Furthermore, we have
\begin{equation}\label{E1.4}Q(z)-\check Q(z)\asymp \delta(z)^2,\qquad z\not\in S, \quad\delta(z)\to0,\end{equation}
where $\delta(z)$ denotes the distance from $z$ to the droplet.\end{prop}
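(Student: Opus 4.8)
The plan is to take the three structural claims in order: (i) $\partial S$ is a $C^\omega$ Jordan curve, (ii) $\check Q \in W^{2,\infty}(\C)$, hence $\partial\check Q = (\partial Q)^S$, and (iii) the quadratic vanishing estimate \eqref{E1.4}. Step (i) is essentially external input: having assumed (A1)--(A2), Sakai's regularity theory for the obstacle problem (as recalled in comment (b)) tells us that $\partial S$ is a finite union of real-analytic arcs with finitely many singular points of known type, and assumption (A3) then forces it to be a single $C^\omega$ Jordan curve. So here I would just cite \cite{ST}, \cite{HM}, \cite{S}, \cite{HS} and move on.

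For step (ii), recall that $\check Q$ is the solution of the obstacle problem: it is the largest subharmonic function $\le Q$ with the prescribed logarithmic growth at infinity. Standard obstacle-problem regularity gives $\check Q \in C^{1,1}_{\mathrm{loc}} = W^{2,\infty}_{\mathrm{loc}}$, and the growth condition \eqref{E11} together with the behavior $\check Q(z) = Q^S(z) + G(z,\infty)$ on $\C\setminus S$ (stated in the excerpt) controls things near infinity, giving $\check Q \in W^{2,\infty}(\C)$ globally; one uses that $Q$ is $C^2$ near $S$ and that on the unbounded component $\check Q$ is harmonic plus the smooth Green's function, which is bounded away from $S$. Once $\check Q \in W^{2,\infty}$, its first-order derivatives are Lipschitz, hence continuous across $\partial S$; inside $S$ we have $\check Q = Q$ so $\partial\check Q = \partial Q$ there, while on $\C\setminus S$ the function $\check Q$ is harmonic (since $\Delta G(\cdot,\infty)=0$ there and $Q^S$ is harmonic off $S$ — here I use that $\Delta\check Q = \Delta Q \cdot \mathbf 1_S$ in the bulk, which is the content of $u = \frac1{2\pi}\Delta Q\cdot\mathbf 1_S$), so $\partial\check Q$ is the holomorphic function on $\Omega_*$ with boundary values $\partial Q|_{\partial S}$, i.e. $(\partial Q)^S$ by the very definition of the superscript-$S$ notation in the Neumann-jump subsection. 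Matching the continuous boundary values from both sides yields $\partial\check Q = (\partial Q)^S$ as an $L^\infty$ (indeed continuous) identity.

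For step (iii), the two-sided estimate $Q(z) - \check Q(z) \asymp \delta(z)^2$ for $z\notin S$ with $\delta(z)\to 0$: the upper bound $Q - \check Q \lesssim \delta^2$ is immediate from $\check Q \in C^{1,1}$ and the fact that $Q - \check Q$ vanishes to first order on $\partial S$ (both $Q-\check Q$ and its gradient vanish there, by continuity of $\partial\check Q$ from step (ii) and $\check Q = Q$ on $S$), so Taylor's theorem with the Lipschitz bound on second derivatives gives $(Q-\check Q)(z) \le C\,\delta(z)^2$. The lower bound is the substantive point and I expect it to be the main obstacle. One wants $Q(z) - \check Q(z) \ge c\,\delta(z)^2$ near $\partial S$; this should come from the nondegeneracy assumption (A2), $\Delta Q \ne 0$ on $S$ — hence $\Delta Q \ge c_0 > 0$ in a neighborhood of $S$ by continuity — combined with the real-analyticity (A1) and the $C^\omega$ smoothness of $\partial S$ from step (i). The idea: fix a boundary point $\zeta_0 \in \partial S$, let $z = \zeta_0 + t\,\mathbf n$ move out along the inner normal; since $\check Q$ is harmonic on $\Omega_*$ with $\check Q = Q$ and $\partial_n \check Q = \partial_n Q$ on $\partial S$ (the $C^1$ matching), the function $t \mapsto (Q-\check Q)(\zeta_0 + t\mathbf n)$ and its $t$-derivative both vanish at $t=0$, and its second $t$-derivative at $t=0^+$ picks up the jump in the normal-normal second derivative, which by harmonicity of $\check Q$ off $S$ and the identity $\Delta\check Q = \Delta Q\cdot\mathbf1_S$ equals $-\Delta \check Q|_{\Omega_*}$ side contribution $+ \Delta Q|_{\Omega}$ side; unwinding, the second-order coefficient is $\tfrac12\Delta Q(\zeta_0) \ge \tfrac12 c_0 > 0$ up to curvature corrections that are $O(t)$. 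A clean way to make this rigorous uniformly in $\zeta_0$ is to use a conformal map of a collar neighborhood of $\partial S$ in $\Omega_*$ onto an annular region, transfer the obstacle-problem data, and invoke the real-analytic dependence to get the uniform quadratic lower bound; alternatively one compares $\check Q$ with explicit harmonic barriers. This uniform quadratic nondegeneracy, leveraging (A1)--(A3) all at once, is where the real work lies; everything else is soft.
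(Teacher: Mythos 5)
The paper does not prove Proposition \ref{Prop1}: in comment (c) it states the result and simply refers to \cite{ST} and \cite{HM} for the proofs, so there is no in-paper argument to compare against. Your sketch is nonetheless the correct and standard one. Step (i) is indeed pure citation, as the paper itself indicates in comment (b). Step (ii), deducing $\check Q\in W^{2,\infty}$ from obstacle-problem regularity and then reading off $\partial\check Q=(\partial Q)^S$ by matching the holomorphic function $\partial\check Q|_{\Omega_*}$ against its continuous boundary trace, is exactly right. Step (iii) correctly isolates where the work is: the upper bound is Taylor plus $C^{1,1}$, and the lower bound comes from the jump in the normal Hessian, which by harmonicity of $\check Q$ on $\Omega_*$ and continuity of the tangential second derivative across $\partial S$ equals $\tfrac12\Delta Q>0$ by (A2). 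Two small points to tidy up. First, ``inner normal'' should read ``outer (exterior) normal,'' since the estimate concerns $z\notin S$; with $t>0$ the point $\zeta_0+t\,\mathbf n$ must leave the droplet. Second, the uniformity over $\zeta_0\in\partial S$ that you flag is genuine but cheap once you have (i) and (A1): $\partial S$ is a compact real-analytic Jordan curve and $\Delta Q$ is continuous and bounded below on a compact collar, so a straightening diffeomorphism with uniformly bounded derivatives (or, as you suggest, barriers) converts the pointwise second-order expansion into a uniform two-sided $\delta^2$ bound; the curvature correction you mention is $O(t^3)$ and absorbed. So the proposal is sound modulo these cosmetic points; it just cannot be matched against a proof the paper never gives.
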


\subsubsection*{(d) Joint intensities} We will occasionally use the intensity $k$-point function of the process $\Psi_n$. This is
the function defined by
$$R_n^{(k)}(z_1,\ldots,z_k)
=\lim_{\eps\to 0}\frac {\Prob_n\lpar \bigcap_{j=1}^k \left\{\Psi_n\cap D(z_j,\eps)\ne \emptyset\right\}\rpar}
{\pi^k\eps^{2k}}=\det\lpar \mathbf{K}_n(z_i,z_j)\rpar_{i,j=1}^k.$$
In particular, $R_n^{(1)}=nu_n$.

\subsubsection*{(e) Organization of the paper} We will derive the following statement which combines theorems \ref{MT1} and \ref{MT3} (whence, by Lemma \ref{ML1}, it implies Theorem \ref{MT2}).

\bs\no{\it Main formula:} Let $\ti\nu_n$ be the measure defined in \eqref{E1.35}. Then
\begin{equation}\label{mainformula}\lim_{n\to\infty}\tilde \nu_n(f)=\frac1{8\pi}\left[\int_S\Delta f+\int_S f\Delta L+\int_{\pa S} f\NN(L^S)~ds\right]+\frac1{2\pi}\int_\C\nabla f^S\cdot\nabla h^S .\end{equation}

\ms\no Our proof of this formula is based on the  limit form of  Ward's identities which we discuss in the next section. To justify this limit form  we need to estimate certain error terms; this is done in Section \ref{Sec3}. In the proof, we refer to some basic estimates of polynomial Bergman kernels, which we collect the appendix. The proof of the main theorem is completed in Section \ref{Sec4}.

\section{Ward identities} \label{Sec2}

\subsection{Exact identities} For a suitable function $v$ on $\C$ we define a
random variable
$W_n^+[v]$
on the probability space $(\C^n,\Prob_n)$ by
$$W_n^+[v]=\frac12\sum_{j\ne k}\frac{v(\lambda_j)-v(\lambda_k)}{\lambda_j-\lambda_k}-2n~\Tr_n[v \pa Q]+
\Tr_n[\pa v].$$

\begin{prop} \label{Prop2.1} Let $v:\C\to\C$ be Lipschitz continuous with compact support.
Then
$$\E_n W_n^+[v]=0.$$
\end{prop}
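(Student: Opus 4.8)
The plan is to derive the identity as a consequence of the invariance of the partition function $Z_n$ under a change of variables in $\C^n$, i.e., to exploit that $Z_n = \int_{\C^n} e^{-H_n(\lambda)}\,dA_n(\lambda)$ does not change if we reparametrize each coordinate $\lambda_j$. Concretely, for $\epsilon$ a small real parameter and $v$ Lipschitz with compact support, I would consider the flow $\lambda_j \mapsto \lambda_j + \epsilon\, v(\lambda_j)$, which for small $\epsilon$ is a bi-Lipschitz change of variables on $\C^n$ (injective because $v$ is Lipschitz, so $\mathrm{id} + \epsilon v$ is a small perturbation of the identity). Pushing forward, $Z_n = \int_{\C^n} e^{-H_n(\lambda + \epsilon v(\lambda))}\, |J_\epsilon(\lambda)|\, dA_n(\lambda)$, where $J_\epsilon$ is the (complex) Jacobian of the map $\lambda \mapsto (\lambda_j + \epsilon v(\lambda_j))_j$. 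Differentiating the right-hand side with respect to $\epsilon$ at $\epsilon = 0$ must give zero; the content of the proposition is that this derivative, after dividing by $Z_n$, is exactly $-\E_n W_n^+[v]$ up to sign conventions.

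The key steps in order: (1) Record that the real Jacobian of $z \mapsto z + \epsilon v(z)$ on $\C \cong \R^2$ is, to first order in $\epsilon$, $1 + 2\epsilon\,\re \pa v(z) + O(\epsilon^2)$ — here one uses that for a map $w = z + \epsilon v(z)$ the real Jacobian determinant is $|1 + \epsilon \pa v|^2 - |\epsilon \bar\pa v|^2 = 1 + 2\epsilon \re \pa v + O(\epsilon^2)$. Hence the product of Jacobians over the $n$ coordinates contributes $\sum_j 2\re \pa v(\lambda_j)$ at first order; combined with the analogous antiholomorphic variation this produces the term $\Tr_n[\pa v]$ (and its conjugate). (2) Differentiate $H_n$ along the flow. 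The interaction term $\sum_{j\ne k}\log|\lambda_j - \lambda_k|^{-1}$ contributes $-\frac12\sum_{j \ne k}\frac{v(\lambda_j) - v(\lambda_k)}{\lambda_j - \lambda_k}$ plus conjugate, upon differentiating $\log|\lambda_j - \lambda_k|$ and symmetrizing in $j,k$. The external field term $2n\sum_j Q(\lambda_j)$ contributes $2n\sum_j \re\big( v(\lambda_j)\, 2\pa Q(\lambda_j)\big)$, i.e. $2n\,\Tr_n[v\,\pa Q]$ plus conjugate (using $\frac{d}{d\epsilon} Q(z + \epsilon v(z)) = 2\re(v(z)\pa Q(z))$ since $Q$ is real). (3) Assemble: $\frac{d}{d\epsilon}\big|_0 \big(e^{-H_n}|J_\epsilon|\big) = \big(-\frac{d}{d\epsilon}|_0 H_n + \frac{d}{d\epsilon}|_0 \log|J_\epsilon|\big) e^{-H_n}$, integrate, divide by $Z_n$, and observe that the resulting expression is $2\re\, \E_n W_n^+[v] = 0$. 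Since this holds for $v$ and also for $iv$, one gets $\E_n W_n^+[v] = 0$ for complex $v$.

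The main obstacle is justifying the differentiation under the integral sign and the rigorous handling of the singularity in $H_n$: the integrand $e^{-H_n}$ involves $\log|\lambda_j - \lambda_k|^{-1}$, which is $+\infty$ on diagonals, but $e^{-H_n}$ vanishes there like $\prod|\lambda_j - \lambda_k|^2$, so the integrand is locally integrable and in fact the dominated-convergence / Leibniz argument goes through once one notes that $\frac{v(\lambda_j) - v(\lambda_k)}{\lambda_j - \lambda_k}$ is \emph{bounded} by the Lipschitz constant of $v$ — this is precisely why the Lipschitz hypothesis is imposed rather than mere continuity, and it is the reason the difference quotient, not the derivative $\pa v$ alone, appears in $W_n^+$. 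One must also check that $v$ having compact support makes all the integrals over $\C^n$ of the differentiated integrand absolutely convergent (the growth condition \eqref{E11} on $Q$ controls the tails of $e^{-H_n}$, and compact support of $v$ localizes the perturbation). Granting these integrability points, which are routine given the stated hypotheses, the identity follows. An alternative, entirely equivalent route is to skip the partition-function language and instead start from the determinantal/density formula $d\Prob_n = \frac1{Z_n}e^{-H_n}dA_n$, integrate the total $\pa_{\lambda_j}$-divergence of $v(\lambda_j)e^{-H_n}$ over $\C^n$ (which vanishes by the divergence theorem, again using compact support of $v$), and collect terms; this makes the "reparametrization invariance" concrete as an integration-by-parts identity and is perhaps the cleanest way to present the computation.
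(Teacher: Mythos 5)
Your proposal is correct and follows essentially the same route as the paper: both differentiate the reparametrization-invariant partition function along the flow $\lambda_j\mapsto\lambda_j+\epsilon v(\lambda_j)$ (the paper uses $\epsilon v/2$, which only changes an overall constant), extract the coefficient of $\epsilon$ from the Jacobian, the logarithmic interaction, and the external field, and then recover the full complex identity by applying the real identity to $v$ and $iv$. Your remarks on why the Lipschitz and compact-support hypotheses justify the differentiation under the integral, and your observation that the paper's Jacobian formula should read $|\partial\phi|^2-|\bar\partial\phi|^2$ rather than a sum (a typo that is harmless at first order), are both accurate refinements of the published argument.
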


\begin{proof}
We write
\begin{equation*}W_n^+[v]= I_n[v]-II_n[v]+III_n[v]\end{equation*}
where (almost everywhere)
\begin{equation*}I_n[v]\left( z\right)=\frac 1 2\sum_{j\ne k}^n\frac
{v\left( z_j\right)-v\left( z_k\right)} {z_j-z_k}
\quad ;\quad II_n[v]\left( z\right)=2\sum_{j=1}^n \d Q\left( z_j\right)~v\left( z_j\right)\quad ;\quad
 III_n[v](z)=\sum_{j=1}^n \d v\left( z_j\right).\end{equation*}
Let $\eps$ be a real parameter and put $z_j=\phi\lpar\zeta_j\rpar=\zeta_j+\eps~v\lpar \zeta_j\rpar/2$, $1\le j\le n$. Then, for $\eps>0$ small enough,
\begin{equation*}d^2 z_j=\lpar~ \babs{~\d \phi\lpar \zeta_j\rpar~}^{~2}-\babs{~\bar\d \phi\lpar\zeta_j\rpar~}^{~2}~\rpar~d^2\zeta_j=
\left[~1+\eps\re\d v\lpar\zeta_j\rpar+O\lpar \eps^2\rpar~\right]~d^2\zeta_j,
\end{equation*}
so that (with $III_n=III_n[v]$)
\begin{equation*}d A_n(z)=\left[~1+\eps~\re ~III_n\lpar \zeta\rpar+
O\lpar\eps^2\rpar~\right]~dA_n(\zeta).
\end{equation*}
Moreover,
\begin{equation*}\begin{split}\log\babs{~z_i-z_j~}^{~2}&=\log\babs{~\zeta_i-\zeta_j~}^{~2}+
\log\babs{~1+\frac \eps 2 \frac {v(\zeta_i)-v(\zeta_j)}
{\zeta_i-\zeta_j}~}^{~2}=\\
&=\log\babs{~\zeta_i-\zeta_j~}^{~2}+
\eps\re\frac {v(\zeta_i)-v(\zeta_j)}
{\zeta_i-\zeta_j}+O\lpar\eps^2\rpar,\\
\end{split}
\end{equation*}
so that
\begin{equation}\label{fs1}\sum_{j\ne k}^n\log\babs{~z_j-z_k~}^{~-1}=
\sum_{j\ne k}^n\log\babs{~\zeta_j-\zeta_k~}^{~-1}-
\eps  \re~ I_n(\zeta)+O\lpar\eps^2\rpar,\quad \text{as}\quad \eps\to 0.
\end{equation}
Finally,
\begin{equation*}Q\lpar z_j\rpar=Q\lpar \zeta_j+\frac \eps 2 v\lpar\zeta_j\rpar\rpar
=Q\lpar\zeta_j\rpar+\eps~ \re~\lpar \d Q\lpar \zeta_j\rpar~v\lpar\zeta_j\rpar\rpar,
\end{equation*}
so
\begin{equation}2n\label{fs2}\sum_{j=1}^n Q\lpar z_j\rpar=
2n\sum_{j=1}^n Q\lpar\zeta_j\rpar+\eps~ \re ~II_n(\zeta)+O\lpar\eps^2\rpar.
\end{equation}
Now \eqref{fs1} and \eqref{fs2} imply that the Hamiltonian
$H_n(z)=\sum_{j\ne k}\log\babs{~z_j-z_k~}^{~-1}+2n\sum_{j=1}^n Q(z_j)$ satisfies
\begin{equation}\label{fs3}H_n (z)=H_n(\zeta)
+\eps\cdot  \re~ \lpar- I_n(\zeta)+II_n(\zeta)\rpar+
O\lpar\eps^2\rpar.\end{equation}
It follows that
\begin{equation*}\begin{split}Z_n:&=\int_{\C^n}\e^{-H_n(z)}~
dA_n(z)=\int_{\C^n}\e^{-H_n(\zeta)-\eps~\re~\lpar- I_n(\zeta)+II_n(\zeta)\rpar+
O\lpar\eps^2\rpar}~\left[1+\eps \re III_n(\zeta)+O\lpar\eps^2\rpar\right]~
dA_n(\zeta).\\
\end{split}\end{equation*}
Since the integral is independent of $\eps$, the coefficient of $\eps$
in the right hand side must vanish, which means that
\begin{equation}\re \int_{\C^n}
\lpar III_n(\zeta)+I_n(\zeta)-
II_n(\zeta)\rpar
~\e^{-H_n(\zeta)}~dA_n(\zeta)=0,\end{equation}
or $\re \mathbf{E}_n ~W_n^+[v]=0$. Replacing $v$ by
$\imag v$ in the preceding argument gives $\im \mathbf{E}_n
~W_n^+[v]=0$ and the proposition follows.
 \end{proof}

\bs\no  Applying Proposition \ref{Prop2.1} to the potential $\tilde Q_n=Q-h/n$, we get the identity
\begin{equation}\label{E2.1}\tilde \E_n \tilde W_n^+[v]=0,\end{equation}
where
\begin{equation}\label{E2.2}\tilde W_n^+[v]=W_n^+[v]+2\Tr_n[v\pa h].\end{equation}
If we denote
$$B_n[v]=\frac1{2n}\sum_{i\ne j}\frac{v(\lambda_i)-v(\lambda_j)}{\lambda_i-\lambda_j}, $$
we can rewrite \eqref{E2.1} and \eqref{E2.2} as follows,
\begin{equation}\label{E2.3}\ti\E_n B_n[v]=2\ti\E_n ~\Tr_n[v\pa Q]-\ti\sigma_n(\pa v+2v\pa h),\end{equation}
where we recall that $\ti\sigma_n$ is the measure with density $\ti u_n$.

\subsection{Cauchy kernels}
For each $z\in\C$ let $k_z$ denote the function
$$k_z(\lambda)=\frac1{z-\lambda},$$
so $z\mapsto \sigma(k_z)$ is the Cauchy transform of the the measure $\sigma$. We have (see \eqref{qhe})
$$\sigma(k_z)=2\pa\check Q(z).$$
We will also consider the Cauchy integrals $\sigma_n(k_z)$ and $\tilde\sigma_n(k_z)$. We have
 $$\bp_z [\sigma_n(k_z)]=\pi u_n(z),\qquad \bp_z [\tilde \sigma_n(k_z)] =\pi \tilde u_n(z),\quad z\in\C,$$
and
$$\tilde\sigma_n(k_z)\to \sigma(k_z)$$
with uniform convergence on $\C$
 (the uniform convergence follows easily from the one-point function estimates in Lemma \ref{LemExt} and Theorem
 \ref{ThmK}).

 \bs
\no Let us now introduce the functions
$$D_n(z)=\nu_n(k_z)\qquad;\qquad \tilde D_n(z)=\tilde \nu_n(k_z).    $$
We have
\begin{equation}\label{E2.8}\tilde D_n(z)=n[\tilde \sigma_n(k_z)-2\pa\check Q(z)]\quad ,\quad
\bar\d\ti D_n=n\pi(\ti u_n-u),\end{equation}
 and if $f$ is a test function, then
\begin{equation}\label{tni}\tilde \nu_n(f)~=~\frac1\pi\int f\bp \tilde D_n~=~-\frac1\pi\int \bp f\cdot\tilde  D_n.\end{equation}
Let $\ti\K_n$ denote the correlation kernel with respect to $\ti Q_n$.
Using $\tilde D_n$,  we can rewrite the $B_n[v]$ term in the Ward identity as follows.

\ss

\begin{lem} \label{Lem2.2} One has that
$$\ti\E_n B_n[v]=2\int v \cdot \pa\check Q\cdot \ti\K_n+\int v  \ti D_n\ti u_n-\frac1{2n}\iint\frac{v(z)-v(w)}{z-w}~|\ti\K_n(z,w)|^2.$$
\end{lem}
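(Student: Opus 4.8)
The plan is to expand $\ti\E_n B_n[v]$ against the two‑point intensity of the determinantal ensemble attached to $\ti Q_n$, and then to symmetrize in the two variables. Since $B_n[v]$ is $\tfrac1{2n}$ times a sum over distinct pairs of the symmetric kernel $g(z,w)=\dfrac{v(z)-v(w)}{z-w}$, the determinantal structure recorded in comment (d) gives
\[
\ti\E_n B_n[v]=\frac1{2n}\iint\frac{v(z)-v(w)}{z-w}\,\ti R_n^{(2)}(z,w)\,dA(z)\,dA(w),\qquad \ti R_n^{(2)}(z,w)=\ti\K_n(z,z)\ti\K_n(w,w)-|\ti\K_n(z,w)|^2,
\]
where the second equality uses that $\ti\K_n$ is Hermitian so that $\ti\K_n(z,w)\ti\K_n(w,z)=|\ti\K_n(z,w)|^2$. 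The contribution of the term $-|\ti\K_n(z,w)|^2$ is visibly the third term in the statement, so everything reduces to identifying the contribution of $\ti\K_n(z,z)\ti\K_n(w,w)=n^2\ti u_n(z)\ti u_n(w)$ (using $\ti R_n^{(1)}=n\ti u_n$, i.e. $\ti\K_n(z,z)=n\ti u_n(z)$), namely
\[
S:=\frac n2\iint\frac{v(z)-v(w)}{z-w}\,\ti u_n(z)\,\ti u_n(w)\,dA(z)\,dA(w).
\]

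Next I would split $\dfrac{v(z)-v(w)}{z-w}=\dfrac{v(z)}{z-w}-\dfrac{v(w)}{z-w}$. After renaming $z\leftrightarrow w$ in the second piece, the two pieces coincide, so
\[
S=n\int v(z)\,\ti u_n(z)\Big(\int\frac{\ti u_n(w)}{z-w}\,dA(w)\Big)dA(z)=n\int v(z)\,\ti u_n(z)\,\ti\sigma_n(k_z)\,dA(z),
\]
since $k_z(\lambda)=1/(z-\lambda)$ and $\ti\sigma_n$ has density $\ti u_n$. Finally, inserting $\ti\sigma_n(k_z)=2\pa\check Q(z)+\tfrac1n\ti D_n(z)$, which is the definition of $\ti D_n$ in \eqref{E2.8}, yields $S=2\int v\,\pa\check Q\,(n\ti u_n)+\int v\,\ti D_n\,\ti u_n=2\int v\,\pa\check Q\,\ti\K_n+\int v\,\ti D_n\,\ti u_n$, which combined with the $|\ti\K_n(z,w)|^2$ term is exactly the asserted identity.

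The only point requiring care — and it is mild — is the legitimacy of Fubini's theorem and of the above splitting across the diagonal $z=w$. This is harmless: $v$ is Lipschitz with compact support, so $(v(z)-v(w))/(z-w)$ is bounded; the two‑point intensity satisfies $0\le\ti R_n^{(2)}(z,w)\le\ti\K_n(z,z)\ti\K_n(w,w)$ with $\ti u_n$ bounded by the one‑point estimates (Lemma \ref{LemExt} and Theorem \ref{ThmK}); and $1/|z-w|$ is locally integrable on $\C$, so each of the split integrals converges absolutely and the interchange of integrations is justified. Thus there is no genuine obstacle here; the substance of the lemma is just the symmetrization identity together with \eqref{E2.8}.
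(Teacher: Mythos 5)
Your proof is correct and follows essentially the same route as the paper's: expand $\ti\E_n B_n[v]$ against the two-point intensity $\ti R_n^{(2)}=\ti\K_n(z)\ti\K_n(w)-|\ti\K_n(z,w)|^2$, symmetrize the $\ti\K_n(z)\ti\K_n(w)$ contribution to recognize $\ti\sigma_n(k_z)$, and substitute $\ti\sigma_n(k_z)=2\pa\check Q+\tfrac1n\ti D_n$ from \eqref{E2.8}. The only difference is that you spell out the Fubini/absolute-integrability justification, which the paper leaves implicit.
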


\no (In the first integral $\ti \K_n(z)$ means the 1-point intensity $\ti R_n^{(1)}(z)=\ti \K_n(z,z)$.)

\bs\no
\begin{proof} We have
 $$\ti\E_n B_n[v]=\frac1{2n}\iint_{\C^2} \frac{v(z)-v(w)}{z-w}\ti R_n^{(2)}(z,w),$$
where
$$ \ti R^{(2)}_n(z,w)=\ti\K_n(z)\ti\K_n(w)-
|\ti\K_n(z,w)|^2.$$
The integral involving $\ti\K_n(z)\ti\K_n(w)$ is
 $$\frac1{n}\iint_{\C^2} \frac{v(z)}{z-w}\ti \K_n(z)\ti \K_n(w)=\int_\C v(z)\cdot\ti \K_n(z)\cdot \ti \sigma_n(k_z),$$
and by \eqref{E2.8}
$\ti \sigma_n(k_z)=\frac1n \ti D_n+2\pa\check Q.$
\end{proof}

\subsection{Limit form of Ward's identity} The main formula \eqref{mainformula} will be derived from Theorem \ref{ThmW} below. In this theorem we make the following assumptions on the vector field $v$:

\ms\no (i) $v$ is bounded on $\C$;

\ms\no (ii)  $v$  is Lip-continuous in  $\C$;

\ms\no (iii) $v$ is uniformly $C^2$-smooth in $\C\sm \pa S$.

\ms\no(The last condition means that the restriction of $v$ to $S$ and the restriction   to $(\C\sm S)\cup \pa S$ are both  $C^2$-smooth.)

\begin{thm} \label{ThmW} If $v$ satisfies (i)-(iii), then as $n\to\infty$,
$$\frac2{\pi}\int_Sv\tilde D_n~\pa\bp Q+\frac2\pi\int_{\C\sm S}v(\pa\check Q-\pa Q)~\bp \tilde D_n~\to -\frac12\sigma(\pa v)-2\sigma(v\pa h).$$
\end{thm}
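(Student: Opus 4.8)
The plan is to obtain Theorem~\ref{ThmW} by passing to the limit in the exact (perturbed) Ward identity \eqref{E2.3}, using Lemma~\ref{Lem2.2} to recast its left side so that the surviving contribution becomes visible and the remainder is controlled by kernel estimates.

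First I would substitute Lemma~\ref{Lem2.2} into \eqref{E2.3} and use $\ti\E_n\Tr_n[v\pa Q]=n\ti\sigma_n(v\pa Q)=\int v\,\pa Q\,\ti\K_n$, with $\ti\K_n(z):=\ti\K_n(z,z)=n\ti u_n(z)$. This yields the exact pre-limit identity
\begin{equation*}
2\int v(\pa\check Q-\pa Q)\,\ti\K_n+\int v\,\ti D_n\,\ti u_n-\frac1{2n}\iint\frac{v(z)-v(w)}{z-w}\,|\ti\K_n(z,w)|^2=-\ti\sigma_n(\pa v+2v\pa h).
\end{equation*}
Next I would extract from the left side the two terms that produce the statement. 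By Proposition~\ref{Prop1}, $\pa\check Q=\pa Q$ on $S$, so the first term lives on $\C\sm S$; there $u\equiv0$, hence $n\ti u_n=\frac1\pi\bp\ti D_n$ by \eqref{E2.8}, and therefore $2\int v(\pa\check Q-\pa Q)\ti\K_n=\frac2\pi\int_{\C\sm S}v(\pa\check Q-\pa Q)\,\bp\ti D_n$ as an exact identity. For the second term, $u=\frac2\pi\pa\bp Q$ on $S$, so
\begin{equation*}
\int v\,\ti D_n\,\ti u_n=\frac2\pi\int_S v\,\ti D_n\,\pa\bp Q+E_1+E_2,\qquad E_1:=\int_S v\,\ti D_n(\ti u_n-u),\quad E_2:=\int_{\C\sm S}v\,\ti D_n\,\ti u_n.
\end{equation*}
Thus the left side of Theorem~\ref{ThmW} equals
\begin{equation*}
\frac1{2n}\iint\frac{v(z)-v(w)}{z-w}|\ti\K_n(z,w)|^2-\ti\sigma_n(\pa v+2v\pa h)-E_1-E_2,
\end{equation*}
and the problem reduces to finding the limits of these four pieces.

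For the linear-statistics term, $\ti\sigma_n\to\sigma$ together with the one-point estimates of Lemma~\ref{LemExt} and Theorem~\ref{ThmK} gives $\ti\sigma_n(\pa v+2v\pa h)\to\sigma(\pa v+2v\pa h)$ (here $\sigma$ charges no point of the analytic curve $\pa S$ across which $\pa v$ may jump, and $2v\pa h$ is Lipschitz). For the fluctuation term I would use the reproducing sum rule $\int|\ti\K_n(z,w)|^2\,d^2w=\ti\K_n(z,z)=n\ti u_n(z)$ to write it as
\begin{equation*}
\frac12\int\ti u_n(z)\Bigl(\int\frac{v(z)-v(w)}{z-w}\,\frac{|\ti\K_n(z,w)|^2}{n\ti u_n(z)}\,d^2w\Bigr)d^2z ;
\end{equation*}
for a.e.\ $z\notin\pa S$, the near-diagonal (Ginibre-type) asymptotics of $\ti\K_n$ from the appendix show that $|\ti\K_n(z,w)|^2\,d^2w/(n\ti u_n(z))$ is a probability measure concentrating at $w=z$ with an asymptotically rotationally symmetric profile, and since
\begin{equation*}
\frac{v(z)-v(w)}{z-w}=\pa v(z)+\bar\pa v(z)\,\frac{\bar w-\bar z}{w-z}+O(|z-w|)
\end{equation*}
by hypothesis (iii) (the oscillatory middle term averaging to $0$), the inner integral tends to $\pa v(z)$ and the bracketed integrand tends to $u(z)\pa v(z)$; as that integrand is dominated by $\|v\|_{\mathrm{Lip}}\ti u_n(z)$ and $\ti u_n\to u$ in $L^1$, dominated convergence gives that the fluctuation term converges to $\frac12\sigma(\pa v)$. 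Finally $|E_1|\le\|v\|_\infty(\sup_S|\ti D_n|)\int_S|\ti u_n-u|$ and $|E_2|\le\|v\|_\infty(\sup_{\C\sm S}|\ti D_n|)\int_{\C\sm S}\ti u_n$, so $E_1,E_2\to0$ once one has the a priori bound $\sup_\C|\ti D_n|=O(\log n)$ (any $o(n^{1/2})$ bound will do) together with $\int_S|\ti u_n-u|+\int_{\C\sm S}\ti u_n\to0$. Summing up, the left side of Theorem~\ref{ThmW} tends to $\frac12\sigma(\pa v)-\sigma(\pa v)-2\sigma(v\pa h)=-\frac12\sigma(\pa v)-2\sigma(v\pa h)$.

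The main obstacle is supplying these analytic inputs uniformly up to and across $\pa S$: the near-diagonal asymptotics of the polynomial Bergman kernel $\ti K_n$ (especially its approximate rotational symmetry on the diagonal), the a priori sub-polynomial bound on $\ti D_n$, and the $L^1$-convergence $\ti u_n\to u$. The delicate region is the boundary layer of width $\sim n^{-1/2}$ around $\pa S$, where $\ti\K_n$ is not described by the naive bulk Gaussian and where, for instance, one must rule out a spurious $O(n^{1/2})$ contribution to $\ti D_n$ coming from the edge behaviour of the one-point density. This is precisely where the real-analyticity assumptions (A1), (A3) and the refined edge estimates collected in Section~\ref{Sec3} and the appendix enter.
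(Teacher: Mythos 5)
Your overall strategy matches the paper's: substitute Lemma~\ref{Lem2.2} into the exact perturbed Ward identity \eqref{E2.3}, use $\pa\check Q=\pa Q$ on $S$ and $u=\tfrac1{2\pi}\Delta Q\cdot\1_S$ to isolate the two target integrals, and then show that the leftover pieces — your $E_1,E_2$ (whose sum equals $\tfrac1\pi\eps_n^2[v]$ in the paper's notation) and the quadratic fluctuation term (which the paper encodes as $\tfrac12\ti\sigma_n(\pa v)+\tfrac12\eps_n^1[v]$) — deliver $-\tfrac12\sigma(\pa v)-2\sigma(v\pa h)$ in the limit. Your sketch of the heat-kernel/rotational-symmetry argument for the fluctuation term is essentially the paper's Proposition~\ref{Prop3.5}.

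However, your treatment of $E_1+E_2$ has a genuine gap. You propose the bound
$$|E_1|+|E_2|\;\lesssim\;\|v\|_\infty\,\bigl(\sup_\C|\ti D_n|\bigr)\int_\C|\ti u_n-u|,$$
and claim that any $o(n^{1/2})$ bound on $\sup|\ti D_n|$ closes the argument. This fails on both counts. First, the sharp a~priori bound (Lemma~\ref{Lem3.7}) is $\|\ti D_n\|_{L^\infty}\lesssim\sqrt n\,\log^3 n$, which is not $o(n^{1/2})$. Second, even with an $O(\log n)$ bound the product would not vanish, because the transition layer of width $\sim n^{-1/2}$ around $\pa S$ — where $\ti u_n$ interpolates smoothly between $\Delta Q/(2\pi)$ and $0$ while $u$ drops discontinuously — already contributes $\int|\ti u_n-u|\gtrsim n^{-1/2}$, so the product is at best $O(\log^3 n)$, not $o(1)$. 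The sup-norm-times-$L^1$ estimate simply cannot see the cancellation in $\int v\,\ti D_n(\ti u_n-u)$. This is precisely why the paper first rewrites
$$\eps_n^2[v]=\pi\int v\,\ti D_n(\ti u_n-u)=-\frac12\int\bp v\;\frac{\ti D_n^2}{n}$$
(via $\bp\ti D_n=n\pi(\ti u_n-u)$ and integration by parts), and then spends all of Section~\ref{Sec3} on a genuinely two-scale argument: the interior decay $|C_n(z)|\lesssim\log^6 n/\delta(z)^3$ of Lemma~\ref{Lem3.8} (obtained by \emph{iterating} Ward's identity against Lipschitz test functions supported inside $S$), the $L^2(\Gamma_n)$ bound of Lemma~\ref{Lem3.9} on the curve at distance $\gamma_n=n^{-1/10}$ (obtained by reflecting the interior estimate across $\pa S$ and invoking the Hardy-space boundedness of the harmonic conjugation operator), and finally a Green's-formula splitting of $\|D_n\|^2_{L^2(\mathrm{int}\,\Gamma_n)}$ into the thin belt $A_n$ and its complement. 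None of this is replaceable by a pointwise bound on $\ti D_n$ paired with an $L^1$ bound on $\ti u_n-u$, so the second error term is where your proposal, as written, breaks down.
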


\ms\no
Before we come to the proof, we check
that it is possible to integrate by parts in the second integral in Theorem \ref{ThmW}. To control the boundary term we can use the next lemma.

\ss\begin{lem} For every fixed $n$ we have
$$\left|\tilde D_n(z)\right|\lesssim \frac1{|z|^2},\qquad (z\to\infty).$$\end{lem}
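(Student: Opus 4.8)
The statement to prove is the decay bound $|\tilde D_n(z)|\lesssim |z|^{-2}$ as $z\to\infty$, for each fixed $n$.

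\textbf{Plan of proof.} The plan is to exploit the explicit form $\tilde D_n(z)=\tilde\nu_n(k_z)=\int (z-\lambda)^{-1}\,d\tilde\nu_n(\lambda)$, where $\tilde\nu_n = n(\tilde\sigma_n-\sigma)$ is a signed measure with \emph{compact support} (since $\tilde u_n$ decays superexponentially outside a fixed neighbourhood of $S$, and $\sigma$ is supported on $S$; the compactness of an effective support follows from the one-point function estimates quoted after \eqref{E2.8}, i.e.\ Lemma~\ref{LemExt} and Theorem~\ref{ThmK}) and, crucially, \emph{total mass zero}: indeed $\tilde\sigma_n(\C) = \int \tilde u_n = 1 = \sigma(\C)$, so $\tilde\nu_n(\C)=0$. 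Hence for $|z|$ large compared to the diameter of $\supp\tilde\nu_n$, I would expand the Cauchy kernel in a Neumann/geometric series,
\begin{equation*}
\frac1{z-\lambda} = \frac1z + \frac{\lambda}{z^2} + \frac{\lambda^2}{z^3} + \cdots,
\end{equation*}
valid uniformly for $\lambda$ in the (fixed, compact) effective support. Integrating against $\tilde\nu_n$, the $1/z$ term vanishes because $\tilde\nu_n$ has zero mass, leaving $\tilde D_n(z) = z^{-2}\tilde\nu_n(\lambda) + O(z^{-3})$ where $\tilde\nu_n(\lambda\mapsto\lambda)$ is the (finite) first moment; in any case the leading surviving term is $O(|z|^{-2})$.

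\textbf{Handling the tail.} Strictly speaking $\tilde\sigma_n$ is not compactly supported — $\tilde u_n$ is positive everywhere — so I would split $\tilde\nu_n = \tilde\nu_n|_{D(0,R)} + \tilde\nu_n|_{\C\sm D(0,R)}$ for a fixed large $R$ (chosen so that $S$ and a neighbourhood lie well inside $D(0,R)$). For the inner part the geometric series argument above applies verbatim, giving a contribution bounded by $C_n/|z|^2$ once $|z|\ge 2R$. For the outer part, $\sigma$ contributes nothing, and one uses the pointwise bound $\tilde u_n(\lambda)\lesssim e^{-c_n|\lambda|^2}$ (or whatever superexponential/polynomial decay is furnished by the appendix estimates for $|\lambda|$ large) together with the trivial estimate $|k_z(\lambda)| = |z-\lambda|^{-1} \le 1$ on the region where $|z-\lambda|\ge 1$; since for fixed $n$ the integral $n\int_{|\lambda|>R}\tilde u_n(\lambda)\,d^2\lambda$ is a finite constant and can be made as small as one likes by taking $R$ large, and moreover $\int_{|\lambda|>R}|k_z(\lambda)|\tilde u_n(\lambda)\,d^2\lambda$ itself decays in $|z|$, this remainder is also $O_n(|z|^{-2})$. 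One should be a little careful near $\lambda = z$ when $|z|$ is itself large, but since $z$ lies far outside $S$ there $\tilde u_n(z)$ is itself superexponentially small, absorbing any local singularity of $k_z$ in $L^1_{loc}$.

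\textbf{Main obstacle.} The only genuine subtlety is justifying that the relevant moments of $\tilde\nu_n$ are finite and that the outer tail really is negligible — i.e.\ marshalling the polynomial Bergman kernel estimates from the appendix (Lemma~\ref{LemExt}, Theorem~\ref{ThmK}) to control $\tilde u_n$ at infinity for fixed $n$. Everything else — the vanishing of the $1/z$ coefficient via $\tilde\nu_n(\C)=0$, and the geometric expansion — is routine. Since the lemma only claims a bound for each fixed $n$ (with an implied constant allowed to depend on $n$), one does not need any uniformity in $n$, which makes the tail estimate comfortable: crude superexponential decay of $\tilde u_n$ suffices.
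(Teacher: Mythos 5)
Your argument is essentially the paper's: the key step in both is that $\tilde\nu_n$ has zero total mass (since $\tilde\sigma_n$ and $\sigma$ are both probability measures), which kills the $1/z$ term, followed by an appeal to the one-point function decay from Lemma~\ref{LemExt} to control the remaining integral. The paper packages this more compactly via the exact identity $\frac{1}{z-\lambda}-\frac1z=\frac{\lambda}{z^2(1-\lambda/z)}$ and the polynomial bound $\tilde u_n(\lambda)\lesssim|\lambda|^{-3}$ (coming from the growth condition \eqref{E11}, so polynomial rather than the superexponential decay you default to), whereas you reach the same conclusion through a geometric expansion with an explicit inner/outer splitting, but the substance is the same.
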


\begin{proof} We have
$$\left|\frac{\tilde D_n(z)}{n}\right|=\left|\int\frac{(\tilde u_n-u)d^2\lambda}{z-\lambda}\right|=
\left|\int\left[\frac1{z-\lambda}-\frac1{z}\right](\tilde u_n-u)d^2\lambda\right|$$
Since
$$\frac1{z-\lambda}-\frac1{z}=\frac1{z^2}~\frac\lambda{1-\lambda/z},$$
we need to show that the integrals
$$\int \frac{|\lambda|~|\tilde u_n-u|}{|1-\lambda/z|}d^2\lambda$$
are uniformly bounded. To prove this, we only need the estimate
$\tilde u_n(\lambda)\lesssim\frac1{|\lambda|^3}$, which holds (for sufficiently large $n$) by the
growth assumption \eqref{E1.1} and the simple estimate
$\tilde u_n(\lambda)\le C\exp(-2n(Q(\lambda)-\check{Q}(\lambda)))$, which is given below in Lemma \ref{LemExt}.
\end{proof}

Using that $\d Q=\d\check{Q}$ in the interior of $S$, we deduce the following corollary of Theorem \ref{ThmW}.

\begin{cor} \label{Cor2.5} {\rm ("Limit Ward identity'')} Suppose that $v$ satisfies conditions (i)-(iii).
Then as $n\to\infty$ we have the convergence
$$\frac2{\pi}\int_\C\left[v\pa\bp Q+\bp v(\pa Q-\pa \check Q)\right]\ti D_n\to-\frac12\sigma(\pa v)-2\sigma(v\pa h). $$
\end{cor}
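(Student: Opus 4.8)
The plan is to show that, for every fixed $n$, the integral on the left in Corollary \ref{Cor2.5} equals the one on the left in Theorem \ref{ThmW}; the corollary is then an immediate consequence of that theorem. Since $\pa S$ has Lebesgue measure zero, I would split $\int_\C=\int_S+\int_{\C\sm S}$ and deal with the two regions separately. On $S$ we have $\pa Q=\pa\check Q$ on $\Int(S)$ by Proposition \ref{Prop1}, so $\bp v\,(\pa Q-\pa\check Q)=0$ a.e.\ on $S$ and
$$\int_S\bigl[v\,\pa\bp Q+\bp v\,(\pa Q-\pa\check Q)\bigr]\ti D_n=\int_S v\,\pa\bp Q\,\ti D_n,$$
which, up to the common factor $2/\pi$, is exactly the first integral in Theorem \ref{ThmW}.

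On $\C\sm S$ the function $\check Q$ is harmonic: indeed $\check Q=Q^S+G(\cdot,\infty)$ there, and both the harmonic extension $Q^S$ and the Green's function $G(\cdot,\infty)$ are harmonic on $\C\sm S$. Hence $\pa\bp\check Q=0$ on that set, and therefore on $\C\sm S$
$$v\,\pa\bp Q+\bp v\,(\pa Q-\pa\check Q)=-v\,\pa\bp(\check Q-Q)-\bp v\,(\pa\check Q-\pa Q)=-\bp\bigl[v\,(\pa\check Q-\pa Q)\bigr].$$
I would then integrate by parts over the exterior domain $\C\sm S$ (Green's formula, or Cauchy--Pompeiu):
$$-\int_{\C\sm S}\bp\bigl[v\,(\pa\check Q-\pa Q)\bigr]\ti D_n=\int_{\C\sm S}v\,(\pa\check Q-\pa Q)\,\bp\ti D_n+c\oint_{\pa S}v\,(\pa\check Q-\pa Q)\,\ti D_n\,dz,$$
$c$ a numerical constant. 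The boundary integral vanishes because $\pa\check Q=(\pa Q)^S$ agrees with $\pa Q$ on $S$, in particular on $\pa S$, by Proposition \ref{Prop1} (equivalently $\pa(Q-\check Q)$ vanishes on $\pa S$, which follows from the estimate $Q-\check Q\asymp\delta^2$ of \eqref{E1.4}); there is no contribution from infinity since $v$ is compactly supported. Thus the $\C\sm S$ part equals the second integral in Theorem \ref{ThmW}. Adding the two contributions and multiplying by $2/\pi$, the left-hand side of the corollary is identified with that of Theorem \ref{ThmW}, and the stated limit $-\tfrac12\sigma(\pa v)-2\sigma(v\pa h)$ follows.

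The only point needing care is the legitimacy of this integration by parts given the limited regularity near $\pa S$, and this is where condition (iii) is used: although $v$ is globally only Lipschitz, its restriction to $(\C\sm S)\cup\pa S$ is $C^2$, which is all that enters the computation on $\C\sm S$. Since $\pa\check Q$ is Lipschitz (Proposition \ref{Prop1}) and $\ti D_n$ is continuous on $\C$ and real-analytic on $\C\sm\pa S$ (because $\bp\ti D_n=n\pi(\ti u_n-u)$, with $\ti u_n$ a diagonal Bergman kernel and $u$ real-analytic on $\Int S$), the integrand $v\,(\pa\check Q-\pa Q)\,\ti D_n$ is $C^1$ up to $\pa S$ from the exterior, so Green's formula applies. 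Finiteness of all the integrals is not an issue either: $v$ is compactly supported, $\pa\check Q-\pa Q$ is bounded near $\pa S$ and vanishes on it, and $\ti D_n(z)=O(|z|^{-2})$ as $z\to\infty$ by the lemma just proved.
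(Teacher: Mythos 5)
Your proposal is correct and matches the paper's own deduction of the corollary from Theorem \ref{ThmW}: split $\int_\C$ into $\int_S+\int_{\C\sm S}$, use $\pa Q=\pa\check Q$ on $\Int S$ to identify the $S$-part with the first integral in the theorem, and on $\C\sm S$ combine the harmonicity of $\check Q$ with the Leibniz rule to write the integrand as $-\bp[v(\pa\check Q-\pa Q)]$ and integrate by parts, the boundary term on $\pa S$ vanishing since $\pa\check Q-\pa Q=0$ there. One immaterial slip: $\ti D_n$ need not be real-analytic off $\pa S$ when $h$ is merely $C^\infty$ (since $\ti u_n$ is only smooth then), but your argument only uses $C^1$ regularity up to the boundary, which is what matters; the paper also uses the decay lemma $|\ti D_n(z)|\lesssim|z|^{-2}$ to control the outer boundary term, whereas you invoke the compact support of $v$ — both justifications are fine.
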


\subsection{Error terms and the proof of Theorem \ref{ThmW}}  Theorem \ref{ThmW} follows if we combine the expressions
for $\ti\E_n B_n[v]$ in \eqref{E2.3} and Lemma
\ref{Lem2.2}
and  use the following approximations of the last two terms in Lemma \ref{Lem2.2}.
More precisely, if we introduce the \textit{first error term}  by
\begin{equation}\label{eps1}\frac1{n}\iint\frac{v(z)-v(w)}{z-w}~|\ti\K_n(z,w)|^2=\ti \sigma_n(\pa v)+\varepsilon_n^1[v],\end{equation}
and the \textit{second error term} by
\begin{equation}\label{eps2}\varepsilon_n^2[v]=\pi\int v \ti D_n(\ti u_n-u)=- \frac 1 2\int \bp v~\frac{\ti D_n^2}n.\end{equation}
Using \eqref{E2.3}, Lemma \ref{Lem2.2}, and that $\d\check Q=\d Q$ a.e. on $S$, one deduces that
\begin{equation}\label{limitward}\frac 2 \pi\int_Sv\ti D_n\d\bar\d Q+\frac 2 \pi\int_{\C\sm S}
v(\d\check Q-\d Q)\bar\d\ti D_n=-\frac 1 2 \sigma(\d v)-2\sigma(v\d h)+\frac 1 2 \eps_n^1[v]-\frac 1 \pi\eps_n^2[v]+o(1),\end{equation}
where $o(1)=(\sigma-\ti\sigma_n)(\d v/2+2v\d h)$ converges to zero as $n\to\infty$ by the one-point function estimates in Lemma \ref{LemExt} and Theorem \ref{ThmK}.

In the next section we will show that
for each $v$ satisfying conditions (i)-(iii), the error terms $\eps_n^j[v]$ tend to zero as $n\to \infty$, which will finish the proof
of Theorem \ref{ThmW}.

\section{Estimates of the error terms} \label{Sec3}

\subsection{Estimates of the kernel $\bf \ti \K_n$} We will use two different estimates for the correlation kernel, one to handle the interior and another for the exterior of the droplet.

\subsubsection*{(a) Exterior estimate.} Recall that $\ti \K_n(z,w)$ is the kernel of the $n$-point process associated with potential $\ti Q_n=Q-h/n$; as usual, we write $\ti \K_n(z)=\ti \K_n(z,z)$. We have the following global estimate, which is particularly useful in the exterior of the droplet.

\begin{lem} \label{LemExt} For all $z\in\C$ we have
$$\ti \K_n(z)\lesssim ne^{-2n(Q- \check Q)(z)},$$
where the constant is independent of $n$ and $z$.
\end{lem}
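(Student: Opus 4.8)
The plan is to bound the diagonal correlation kernel $\ti\K_n(z) = \ti\K_n(z,z) = \ti K_n(z,z)e^{-2n\ti Q_n(z)}$ by a pointwise-evaluation/reproducing-kernel argument. Recall that $\ti K_n$ is the reproducing kernel of the space $\PP_n(e^{-2n\ti Q_n})$ of analytic polynomials of degree at most $n-1$ equipped with the weighted $L^2$-norm $\|p\|^2 = \int_\C |p|^2 e^{-2n\ti Q_n}\,dA$. By the reproducing property together with Cauchy--Schwarz, one has the standard extremal characterization
\begin{equation*}
\ti K_n(z,z) = \sup\left\{ |p(z)|^2 : p \in \PP_n,\ \int_\C |p|^2 e^{-2n\ti Q_n}\,dA \le 1 \right\},
\end{equation*}
so that $\ti\K_n(z) = \sup\{ |p(z)|^2 e^{-2n\ti Q_n(z)} : \|p\|\le 1\}$. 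Thus it suffices to show that for every polynomial $p$ of degree $<n$ with unit weighted norm, $|p(z)|^2 e^{-2n\ti Q_n(z)} \lesssim n\, e^{-2n(Q-\check Q)(z)}$, uniformly in $z$ and $n$.

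The key step is a sub-mean-value estimate for $|p|^2 e^{-2n\check Q}$. Since $\log|p|$ is subharmonic and $\check Q$ is subharmonic (being a subharmonic envelope), and in fact $\frac 1 n\log|p|^2 - 2\check Q$ is... one wants to compare against $\check Q$ rather than $Q$ because $\check Q$ is globally defined, $C^{1,1}$, and equals $Q$ on $S$. First I would note that since $\deg p < n$ and $\check Q(z) = \log|z| + O(1)$ at infinity, the function $|p(z)|^2 e^{-2n\check Q(z)}$ is bounded and tends to $0$ at infinity. The function $u(z):= \log|p(z)|^2 - 2n\check Q(z)$ is subharmonic on the open set where $\check Q$ is "harmonic" — but more robustly, one uses that $|p|^2 e^{-2n\check Q}$ is logarithmically subharmonic on $\C\sm S$ (where $\Delta\check Q = 0$) and applies a sub-mean value inequality over a disk $D(z, \delta/n^{1/2})$ or of fixed small radius, combined with the bound $\check Q \le Q$ everywhere. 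Concretely: on $S$, $\check Q = Q$; off $S$, since $\check Q$ is harmonic-ish, $|p|^2 e^{-2n\check Q}$ satisfies a sub-mean value property, so its value at $z$ is controlled by its average over a disk, which by $\check Q\le Q$ is bounded by $\int_{D} |p|^2 e^{-2n\check Q} \le \int_\C |p|^2 e^{-2nQ} \cdot e^{-2n\,\mathrm{osc}} $... the clean route is: for $z$ near $S$, use a disk of radius $\asymp 1/\sqrt n$ on which $\check Q$ has controlled oscillation, giving
\begin{equation*}
|p(z)|^2 e^{-2n\check Q(z)} \lesssim \frac{n}{\pi}\int_{D(z,1/\sqrt n)} |p|^2 e^{-2n\check Q}\,dA \le \frac{n}{\pi}\int_\C |p|^2 e^{-2nQ}\,dA = \frac n\pi \|p\|_{Q}^2,
\end{equation*}
and then relate $\|p\|_Q$ to $\|p\|_{\ti Q_n} = 1$ using that $\ti Q_n = Q - h/n$ with $h$ bounded, so $e^{-2n\ti Q_n} = e^{2h}e^{-2nQ} \asymp e^{-2nQ}$ uniformly. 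Multiplying through by $e^{-2n(Q-\check Q)(z)}$ converts the $e^{-2n\check Q}$ weight on the left into $e^{-2nQ}$ and produces exactly the claimed factor $e^{-2n(Q-\check Q)(z)}$.

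The main obstacle, and the point requiring care, is the sub-mean value step near $\pa S$: off $S$ one genuinely has log-subharmonicity of $|p|^2 e^{-2n\check Q}$ (since $\Delta \check Q = 0$ there), but a disk $D(z,1/\sqrt n)$ centered just outside $S$ protrudes into $S$ where $\check Q$ is only $C^{1,1}$ and $\Delta\check Q = \Delta Q \ne 0$. One handles this by using that $\check Q$ is globally $C^{1,1}$ (Proposition \ref{Prop1}, $\check Q\in W^{2,\infty}$), so on any disk of radius $r/\sqrt n$ the oscillation of $2n\check Q$ from its "harmonic part" is $O(r^2)$, uniformly; hence $|p|^2 e^{-2n\check Q}$ is within a bounded factor of a genuinely log-subharmonic function on that disk and the sub-mean value inequality holds up to that bounded constant. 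Alternatively one invokes the standard weighted-Bergman pointwise estimate (of the type collected in the appendix, e.g. the estimates behind Theorem \ref{ThmK}) for the subharmonic weight $n\check Q$. Either way the constant is independent of $n$ and $z$, which is exactly what the lemma asserts; the factor $n$ comes from the area $\pi/n$ of the disk. For $z$ deep in the interior of $S$ the same argument gives the (stronger, since $Q = \check Q$ there) bound $\ti\K_n(z)\lesssim n$, consistent with the statement since $Q-\check Q = 0$ on $S$.
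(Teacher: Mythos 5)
Your overall strategy—the extremal characterization of $\ti\K_n(z)$ as a supremum over normalized polynomials, the reduction to $h=0$ via $\|p\|_{e^{-2n\ti Q_n}}\asymp\|p\|_{e^{-2nQ}}$, and comparing against $\check Q$ via a sub-mean-value inequality on a disk of radius $\asymp n^{-1/2}$—is the right one, and the $h\neq0$ reduction coincides with what the paper actually writes (the paper otherwise just cites \cite{AHM1} for the $h=0$ kernel bound). However, the way you close the estimate contains a sign error. The step
\begin{equation*}
\int_{D(z,1/\sqrt n)}|p|^2 e^{-2n\check Q}\,dA \;\le\;\int_\C|p|^2 e^{-2nQ}\,dA,
\end{equation*}
which you justify twice by ``$\check Q\le Q$'', does not follow from that inequality: $\check Q\le Q$ gives $e^{-2n\check Q}\ge e^{-2nQ}$, i.e.\ the opposite direction. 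What is actually needed is that $Q-\check Q\lesssim n^{-1}$ on the disk, so that $e^{-2n\check Q}\lesssim e^{-2nQ}$ there up to a bounded multiplicative constant. This holds precisely when $\delta(z)\lesssim n^{-1/2}$, and it uses the quadratic estimate $(Q-\check Q)(\zeta)\asymp\delta(\zeta)^2$ from Proposition~\ref{Prop1}. You invoke Proposition~\ref{Prop1} only for $\check Q\in W^{2,\infty}$ (to patch up log-subharmonicity across $\pa S$), but the quadratic bound is the ingredient that makes the weight comparison work, and it should be cited explicitly.

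Relatedly, the argument as written only treats $z\in S$ and $z$ ``near $S$''. For $z\notin S$ with $\delta(z)\gg n^{-1/2}$, the disk $D(z,n^{-1/2})$ sits entirely in $\C\sm S$, where $Q-\check Q$ is bounded below by a positive constant, so $\int_D|p|^2e^{-2n\check Q}$ is exponentially larger than $\int_D|p|^2e^{-2nQ}$ and no sub-mean-value argument centered at $z$ can produce the required exponentially small bound $ne^{-2n(Q-\check Q)(z)}$. That regime needs a maximum-principle step: $|p|^2e^{-2n\check Q}$ is log-subharmonic on $\C\sm S$ (where $\check Q$ is harmonic) and tends to $0$ at infinity since $\deg p<n$ and $\check Q(\zeta)=\log|\zeta|+O(1)$; hence its supremum over $\overline{\C\sm S}$ is attained on $\pa S$, where it equals $|p|^2e^{-2nQ}$, reducing to the near-boundary case you do treat. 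With these two corrections—replace ``$\check Q\le Q$'' by the quadratic bound on a $n^{-1/2}$-neighborhood of $\pa S$, and add the maximum-principle reduction for $z$ far outside $S$—your proof becomes a correct, self-contained derivation of the estimate that the paper simply cites.
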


\bs\no
This estimate has been recorded (see e.g. \cite{AHM1}, Section 3) for the kernels  $\K_n$, i.e. in the  case $h=0$. Since obviously
$$\|p\|_{e^{-2n\ti Q_n}}\asymp\|p\|_{e^{-2nQ}},$$ we have $\ti \K_n(z)\asymp \K_n(z)$
with a constant independent of $z$. Indeed,
 $\K_n(z)$ is the supremum of $\babs{p(z)}^2e^{-2nQ(z)}$ where $p$ is an analytic polynomial of degree
less than $n$ such that $\|p\|_{e^{-2nQ}}\le 1$, and we have an analogous supremum characterization
of $\ti \K_n(z)$.
Hence the case $h\ne0$ does not require any special treatment.

\ms\no In the following we write
$$\delta(z)=\dist(z,\pa S)$$
and
$$\delta_n=\frac{\log^2n}{\sqrt n}.$$
By our assumption on the droplet (see Proposition \ref{Prop1})  we have
$$Q(z)-\check Q(z)\gtrsim \delta^2(z),\qquad z\not\in S,\quad \delta(z)\to0.$$
In view of the growth assumption \eqref{E11}, it follows that for any $N>0$ there exists $C_N$ such that
$\ti \K_n(z)\lesssim C_Nn^{-N}$ when $z$ is outside the $\delta_n$-neighborhood of $S$.

\subsubsection*{(b) Interior estimate.} Recall that we assume that $Q$ is real analytic in some neighbourhood of $S$.  This means that we can extend $Q$ to a complex analytic function of two variables in some neighbourhood in $\C^2$ of the anti-diagonal
$$\{(z,\bar z):~ z\in S\}\subset\C^2.$$
We will use the same letter $Q$ for this extension, so
$$Q(z)=Q(z,\bar z).$$ We  have
$$Q(z,w)=\overline {Q(\bar w,\bar z)}$$
and
$$\pa_1Q(z,\bar z)=\pa Q(z),\quad \pa_1\pa_2Q(z,\bar z)=\pa\bp Q(z),\quad
\pa_1^2Q(z,\bar z)=\pa^2 Q(z),\quad\rm etc.$$
 With the help of this  extension, one can show that the leading contribution to the kernel $\K_n$ is of the form
\begin{equation}\label{ksharp}\K_n^\#(z,w)~=\frac2\pi(\pa_1\pa_2Q)(z,\bar w)~ne^{n[2Q(z,\bar w)-Q(z)-Q(w)]}.\end{equation}
In particular, we have
$$\K_n^\#(w, w)~=\frac{n\Delta Q(w)}{2\pi},\qquad (w\in S).$$
We shall use the following estimate in the interior.

\begin{thm}\label{ThmK}  If $z\in S$, $\delta(z)>2\delta_n$, and if $|z-w|<\delta_n$, then
$$ \babs{\ti\K_n(z, w)}~=~\babs{\K_n^\#(z, w)} +O(1),$$
where the constant in $O(1)$ depend on $Q$ and $h$ but not on $n$.
\end{thm}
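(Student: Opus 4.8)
The plan is to obtain Theorem~\ref{ThmK} from a near-diagonal asymptotic expansion of the perturbed polynomial Bergman kernel, valid uniformly on the region $\{\delta(z)>2\delta_n,\ |z-w|<\delta_n\}$, together with the observation that the amplitude $e^{2h}$ created by the perturbation $\tilde Q_n=Q-h/n$ drops out once one passes to moduli. Write $\tilde\K_n(z,w)=\tilde K_n(z,w)\,e^{-n(\tilde Q_n(z)+\tilde Q_n(w))}$, where $\tilde K_n$ is the reproducing kernel of the space of analytic polynomials of degree $<n$ for the weight $e^{-2n\tilde Q_n}=e^{2h}\,e^{-2nQ}$; equivalently $\tilde\K_n$ is the reproducing kernel of $\tilde W_n=\{p\,e^{-n\tilde Q_n}\}$. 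I would prove the stronger statement $\tilde\K_n(z,w)=\K_n^\#(z,w)+O(1)$ as complex numbers, with the $O(1)$ depending only on $Q$ and $h$; the claimed estimate follows at once from $\bigl|\,|\tilde\K_n(z,w)|-|\K_n^\#(z,w)|\,\bigr|\le|\tilde\K_n(z,w)-\K_n^\#(z,w)|$.

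\emph{First, the polynomial constraint and the far field are discarded.} This is where the precise value $\delta_n=\log^2 n/\sqrt n$ is used: it makes all ``tail'' contributions super-polynomially small. By the exterior estimate of Lemma~\ref{LemExt}, the lower bound $Q-\check Q\gtrsim\delta^2$ of Proposition~\ref{Prop1}, and the growth hypothesis~\eqref{E11}, one has $\tilde\K_n(\zeta)\lesssim n^{-N}$ for every $N$ as soon as $\dist(\zeta,S)\gtrsim\delta_n$; then the reproducing-kernel Cauchy--Schwarz inequality $|\tilde\K_n(z,\zeta)|\le(\tilde\K_n(z)\,\tilde\K_n(\zeta))^{1/2}$ shows that the mass of all relevant reproducing integrals lying outside a disc $D(z,C\log n/\sqrt n)$ is $O(n^{-N})$. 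On that disc --- here the hypothesis $\delta(z)>2\delta_n$ enters --- the constraint ``degree $<n$'' is invisible, so $\tilde K_n$ may be replaced by the reproducing kernel of the full weighted Bergman space $L^2_a(e^{-2n\tilde Q_n})$ (or, after the rescaling $w=z+\zeta/\sqrt{n\Delta Q(z)}$, by that of a fixed local model), at the cost of an error governed by $e^{-cn\delta(z)^2}\le e^{-c\log^4 n}$.

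\emph{Second, the local expansion.} Assumptions (A1)--(A2) enter here: real analyticity of $Q$ near $S$ yields the holomorphic extension $Q(z,\bar w)$ which is what makes sense of $\K_n^\#$, while $\Delta Q\asymp 1$ near $S$ puts H\"ormander's weighted $L^2$-estimate for $\bar\partial$ at our disposal with constants depending only on $Q$. The construction is the standard one: take $\chi(\cdot)\,\K_n^\#(\cdot,w)$ as a trial reproducing element, where $\chi$ is a cutoff equal to $1$ on $D(w,C\log n/\sqrt n)$; solve $\bar\partial u=\bar\partial\bigl(\chi\,\K_n^\#(\cdot,w)\bigr)$ with the minimal-norm solution for the weight $e^{-2n\tilde Q_n}$, and subtract. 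One must carry the fixed smooth amplitude $\psi=e^{2h}$ through the computation; at leading order it contributes the factor $\psi(z)^{-1/2}\psi(w)^{-1/2}=e^{-h(z)-h(w)}$ to the kernel of $\psi\,e^{-2nQ}$, and after multiplying back by $e^{-n(\tilde Q_n(z)+\tilde Q_n(w))}=e^{-n(Q(z)+Q(w))}e^{h(z)+h(w)}$ this factor cancels in modulus. The outcome is $|\tilde\K_n(z,w)|=|\K_n^\#(z,w)|\,\bigl(1+O(1/n)\bigr)+O(e^{-c\log^4 n})$, where the $O(1/n)$ is the usual relative error of the near-diagonal Bergman expansion, whose subleading coefficients are bounded functions of $Q$ and $h$ on a fixed neighbourhood of $S$ and hence uniformly bounded on our region. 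Since $\re\bigl[2Q(z,\bar w)-Q(z)-Q(w)\bigr]=-\partial\bar\partial Q(z)\,|z-w|^2+O(|z-w|^3)$ with $\partial\bar\partial Q\asymp 1$ near $S$, we have $|\K_n^\#(z,w)|\lesssim n\,e^{-cn|z-w|^2}$, hence $\frac1n|\K_n^\#(z,w)|\lesssim e^{-cn|z-w|^2}\le 1$; therefore $\bigl|\,|\tilde\K_n(z,w)|-|\K_n^\#(z,w)|\,\bigr|\lesssim\frac1n|\K_n^\#(z,w)|+e^{-c\log^4 n}=O(1)$, as required. (An alternative to the $\bar\partial$-scheme --- sufficient for the leading term but not obviously for the $O(1)$ precision --- is the two-sided estimate: test $\tilde\K_n(w,w)$ against a polynomial peaked near $z$ for the lower bound, and use a sub-mean-value inequality for $|p|^2e^{-2n\tilde Q_n}$ on discs of radius $\asymp 1/\sqrt n$ for the upper bound.)

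\emph{Main obstacle.} The real difficulty is uniformity: the classical near-diagonal Bergman expansion is normally stated over a fixed compact subset of $\Int S$, whereas here it is needed uniformly for $z$ only at distance $2\delta_n\to0$ from $\partial S$. This forces one to track the dependence on $\delta(z)$ and on $|z-w|$ of every error term in the $\bar\partial$-argument --- the truncation of the polynomial space, the cutoff error $\|\bar\partial\chi\|$, the H\"ormander remainder --- and to verify that each is of the form $O(e^{-cn\delta(z)^2})$ or $O\bigl(\frac1n|\K_n^\#(z,w)|\bigr)$, both of which are absorbed by the choice $\delta_n=\log^2 n/\sqrt n$ and by the elementary fact that $x\mapsto xe^{-x}$ is bounded on $[0,\infty)$. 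A secondary point is that $h$ is merely $C^\infty$, so $e^{2h}e^{-2nQ}$ cannot be treated as a real-analytic weight; it must be handled as the real-analytic weight $e^{-2nQ}$ carrying a fixed smooth amplitude, and it is precisely this amplitude form of the expansion that produces the modulus-cancellation making $\K_n^\#$ --- built from $Q$ alone --- the correct comparison kernel.
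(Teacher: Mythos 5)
Your overall scheme --- reduce to a near-diagonal Bergman expansion via the exterior estimate and H\"ormander's $\bar\partial$-estimate, treating $e^{2h}$ as a fixed smooth amplitude --- is indeed the paper's strategy. But the central step has a genuine error: the complex-valued estimate $\tilde\K_n(z,w)=\K_n^\#(z,w)+O(1)$ that you propose to prove is \emph{false} in general, and the amplitude correction $\psi(z)^{-1/2}\psi(w)^{-1/2}=e^{-h(z)-h(w)}$ is not the right object to carry through a $\bar\partial$-argument.

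Here is why. Since $h$ is smooth but not real-analytic, the correct analytic approximate kernel (the one for which $\bar\partial_z$ produces a small error that H\"ormander's estimate can absorb) is obtained from the \emph{first-order Taylor polynomial} of $h$ at $w$, namely $h_w(z)=h(w)+(z-w)\partial h(w)$, giving $\tilde K_w^\#(z)=K_w^\#(z)\,e^{-2h_w(z)}$. Passing to the correlation kernel, one finds
$$\tilde\K_n^\#(z,w)=\K_n^\#(z,w)\,e^{\,h(z)+h(w)-2h_w(z)},\qquad h(z)+h(w)-2h_w(z)=-2i\,\im\!\bigl[(z-w)\partial h(w)\bigr]+O(|z-w|^2),$$
because $h$ is real. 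The \emph{modulus} of the extra factor is $1+O(|z-w|^2)$, and since $|\K_n^\#(z,w)|\lesssim n\,e^{-cn|z-w|^2}$ we have $|\K_n^\#(z,w)|\cdot O(|z-w|^2)=O(1)$ by boundedness of $t\mapsto t e^{-ct}$; this is precisely what the theorem asserts. But the \emph{phase} $e^{-2i\,\im[(z-w)\partial h(w)]}-1=O(|z-w|)$, multiplied by $|\K_n^\#(z,w)|\lesssim n\,e^{-cn|z-w|^2}$, can be as large as $\sqrt n$ (maximize $n\,t\,e^{-cnt^2}$ in $t$). So $\tilde\K_n(z,w)-\K_n^\#(z,w)$ is typically of size $\sqrt n$, not $O(1)$, and the inequality $\bigl||\tilde\K_n|-|\K_n^\#|\bigr|\le|\tilde\K_n-\K_n^\#|$ that you invoke to deduce the theorem throws away exactly the cancellation the theorem is about. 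Relatedly, your proposed amplitude $e^{-h(z)-h(w)}$ is not analytic in $z$, so the test function $\chi(\cdot)\K_n^\#(\cdot,w)e^{-h(\cdot)-h(w)}$ would have $\bar\partial$ of size $O(n\,e^{-cn|\cdot-w|^2})$ rather than being supported on the annulus where $\bar\partial\chi\ne0$; the minimal $\bar\partial$-solution would then have $L^2_{nQ}$-norm of order $\sqrt n$, not $O(1)$, and the scheme would not close. The paper's choice $e^{-2h_w(z)}$ is precisely what makes the trial element analytic in $z$; the price is the $O(|z-w|)$ phase, which is why the final theorem is stated only for $|\tilde\K_n|$ and not for $\tilde\K_n$ itself. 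Your proposal should therefore (i) replace the amplitude $\psi(z)^{-1/2}\psi(w)^{-1/2}$ by the analytic Taylor factor $e^{-2h_w(z)}$ in the trial reproducing element, (ii) prove $\tilde\K_n(z,w)=\tilde\K_n^\#(z,w)+O(1)$ as complex numbers (this is the estimate that is actually true), and (iii) only then pass to moduli, using $|\tilde\K_n^\#(z,w)|=|\K_n^\#(z,w)|(1+O(|z-w|^2))=|\K_n^\#(z,w)|+O(1)$.
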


\bs\no Similar types of expansions are discussed e.g. in \cite{B1}, \cite{A}, \cite{AHM1}. As there is no convenient reference for this
particular result, and to make the paper selfcontained, we include a proof in the appendix.

\bs\no We now turn to the proof that  the error terms $\eps^1_n[v]$ and $\eps^2_n[v]$ are negligible. See \eqref{eps1} and \eqref{eps2}. Our proof uses only the  estimates of the kernels $\ti\K_n$ mentioned above. Since the form of these estimates is the same for all  perturbation functions $h$, we can without loss of generality set $h=0$, which will simplify our notation -- no need to put tildes on numerous letters.

\subsection{First error term} We start with the observation that if $w\in S$ and $\delta(w)>2\delta_n$ then at short distances the so called
Berezin kernel rooted at $w$
$$B^{\langle w\rangle}_n(z)=\frac{|\K_n(z,w)|^2}{\K_n(w,w)}$$
is close  to the heat kernel
$$H^{\langle w\rangle}_n(z)=\frac1\pi cne^{-cn|z-w|^2},\qquad c:=2\pa\bp Q(w).$$
Both kernels determine probability measures indexed by $w$.
Most of the heat kernel measure is concentrated in the disc $D(w,\delta_n)$,
$$\int_{\C\sm D(w,\delta_n)} H^{\langle w\rangle}_n(z)~dA(z)~\lesssim ~\frac1{n^{N}},$$
where $N$ denotes an arbitrary (large) positive number.

\ms\begin{lem} \label{Lem3.3}Suppose that $w\in S$, $\delta(w)>2\delta_n$ and $|z-w|<\delta_n$. Then
$$ |B^{\langle w\rangle}_n(z)-H^{\langle w\rangle}_n(z)|~\lesssim~ n\delta_n.$$
\end{lem}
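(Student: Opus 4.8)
The plan is to compare the Berezin kernel $B^{\langle w\rangle}_n(z) = |\K_n(z,w)|^2/\K_n(w,w)$ with the heat kernel $H^{\langle w\rangle}_n(z) = \frac1\pi cn e^{-cn|z-w|^2}$ directly, using the interior estimate from Theorem \ref{ThmK}. Since $w\in S$ with $\delta(w)>2\delta_n$ and $|z-w|<\delta_n$, Theorem \ref{ThmK} applies to the pair $(w,z)$ (after noting $\delta(z) > \delta_n$ as well, so the hypotheses are met after possibly halving $\delta_n$), giving $|\K_n(z,w)| = |\K_n^\#(z,w)| + O(1)$. Likewise $\K_n(w,w) = \K_n^\#(w,w) + O(1) = \frac{n\Delta Q(w)}{2\pi} + O(1) = \frac{cn}{2\pi}(1+O(1/n))$. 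The first step is therefore to record that
$$B^{\langle w\rangle}_n(z) = \frac{\bigl(|\K_n^\#(z,w)| + O(1)\bigr)^2}{\frac{cn}{2\pi}(1+O(1/n))}.$$

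The second step is to expand $|\K_n^\#(z,w)|^2$. By \eqref{ksharp},
$$|\K_n^\#(z,w)|^2 = \frac{4}{\pi^2}|(\pa_1\pa_2 Q)(z,\bar w)|^2\, n^2\, e^{2n\,\re[2Q(z,\bar w) - Q(z) - Q(w)]}.$$
Here I would Taylor-expand the real-analytic function $Q(z,w)$ about the anti-diagonal point $(w,\bar w)$: writing $z = w + \zeta$ with $|\zeta| < \delta_n$, one gets $2\re[2Q(z,\bar w) - Q(z) - Q(w)] = -2(\pa\bp Q)(w)|\zeta|^2 + O(|\zeta|^3) = -c|\zeta|^2 + O(\delta_n^3)$, and $(\pa_1\pa_2 Q)(z,\bar w) = (\pa\bp Q)(w) + O(\delta_n) = \frac c2 + O(\delta_n)$. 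Since $n\delta_n^3 = \log^6 n/\sqrt n \to 0$, the exponential factor is $e^{-cn|\zeta|^2}(1 + O(n\delta_n^3))$ and the prefactor is $\frac{c^2}{\pi^2}n^2(1 + O(\delta_n))$. Combining, $|\K_n^\#(z,w)|^2 = \frac{c^2 n^2}{\pi^2}e^{-cn|z-w|^2}(1 + O(n\delta_n^3))$, and dividing by $\frac{cn}{2\pi}$ produces exactly $H^{\langle w\rangle}_n(z)(1 + O(n\delta_n^3))$.

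The third step is to track the additive $O(1)$ errors in the numerator. Writing $|\K_n^\#(z,w)| = \sqrt{H^{\langle w\rangle}_n(z)\cdot\frac{cn}{2\pi}}\,(1+o(1)) \lesssim n\, e^{-cn|z-w|^2/2}$, we have $\bigl(|\K_n^\#(z,w)|+O(1)\bigr)^2 = |\K_n^\#(z,w)|^2 + O(n\,e^{-cn|z-w|^2/2}) + O(1)$; dividing by $\K_n(w,w) \asymp n$ contributes errors of size $O(e^{-cn|z-w|^2/2}) + O(1/n)$, which are $\lesssim n\delta_n$ comfortably (indeed much smaller). Collecting all contributions, $|B^{\langle w\rangle}_n(z) - H^{\langle w\rangle}_n(z)| \lesssim H^{\langle w\rangle}_n(z)\cdot n\delta_n^3 + O(1) \lesssim n\cdot n\delta_n^3 + O(1) \lesssim n\delta_n$, using $H^{\langle w\rangle}_n \lesssim n$ and $n\delta_n^3 \le \delta_n$.

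The main obstacle is the bookkeeping in the Taylor expansion of the exponent: one must verify that the cross terms and the linear term in the expansion of $2Q(z,\bar w) - Q(z) - Q(w)$ about $z=w$ genuinely cancel to second order (they do, since this expression and its first-order $\zeta$, $\bar\zeta$ derivatives vanish at $z=w$), so that the Gaussian $e^{-cn|\zeta|^2}$ emerges cleanly with the cubic remainder controlled by $n\delta_n^3 = o(1)$. Everything else is a routine propagation of the $O(1)$ errors from Theorem \ref{ThmK} through the quotient, using only that the denominator is of exact order $n$ and that $H^{\langle w\rangle}_n$ is bounded by a constant times $n$ on the relevant disc.
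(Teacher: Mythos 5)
Your overall route is the same as the paper's: reduce $B^{\langle w\rangle}_n$ to $|\K_n^\#(z,w)|^2/\K_n^\#(w,w)$ via Theorem \ref{ThmK}, Taylor-expand $2\re\left[2Q(z,\bar w)-Q(z)-Q(w)\right]$ about the anti-diagonal to extract the Gaussian, and propagate the additive $O(1)$ errors through the quotient. The expansion and the bookkeeping of the $O(1)$ terms are fine, modulo a factor-of-two slip: with $c=2\pa\bp Q(w)$ one has $\K_n^\#(w,w)=n\Delta Q(w)/2\pi=cn/\pi$, not $cn/2\pi$ (the leading quotient still comes out to exactly $H^{\langle w\rangle}_n$, so this is only an internal inconsistency).

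The genuine problem is the last line. You bound the error by $H^{\langle w\rangle}_n(z)\cdot O(n\delta_n^3)\lesssim n\cdot n\delta_n^3$ and then invoke ``$n\delta_n^3\le\delta_n$''. That inequality is false: $n\delta_n^2=\log^4n\to\infty$, so $n^2\delta_n^3=n\delta_n\cdot\log^4n$, and your argument only yields $|B^{\langle w\rangle}_n-H^{\langle w\rangle}_n|\lesssim n\delta_n\log^4n$, missing the stated bound by $\log^4 n$. (This weaker bound would still suffice for Corollary \ref{Cor3.4} and Proposition \ref{Prop3.5}, but it does not prove the lemma as stated.) The repair is to not discard the Gaussian before estimating the cubic correction: keep the error in the form $ne^{-cn|z-w|^2}\cdot O(n|z-w|^3)$ and use boundedness of $x\mapsto xe^{-cx}$ to write
$$n^2|z-w|^3e^{-cn|z-w|^2}=n|z-w|\cdot\bigl(n|z-w|^2e^{-cn|z-w|^2}\bigr)\lesssim n|z-w|\le n\delta_n.$$
This is precisely how the paper's error term $O(n|z-w|)$ arises from $\frac1\pi\bigl[c+O(|z-w|)\bigr]\,ne^{-cn|z-w|^2+O(n|z-w|^3)}$; the prefactor error $O(|z-w|)$ causes no such trouble, but the exponent correction does, and it must be absorbed by the Gaussian rather than by the crude bounds $e^{-cn|z-w|^2}\le1$ and $|z-w|\le\delta_n$.
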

\begin{proof}  By Theorem \ref{ThmK} we have
$$B^{\langle w\rangle}_n(z)=\frac{|\K_n^\#(z,w)|^2}{\K_n^\#(w,w)}+O(1).$$
Next, we fix $w$ and apply Taylor's formula to the function $z\mapsto \K^\#(z,w)$ at $z= w$.
Using the explicit formula \eqref{ksharp} for this function, and that
\begin{align*}&Q(z,\bar w)+\overline{Q(z,\bar w)}-Q(z,\bar z)-Q(w,\bar w)=
[Q(z,\bar w)-Q(w,\bar w)]+[Q(w,\bar z)-Q(z,\bar z)]=\\
&\pa Q(w)(z-w)+\frac12\pa^2Q(w)(z-w)^2+\pa Q(z)(w-z)+\frac12\pa^2Q(z)(z-w)^2+\dots=\\
&[\pa Q(w)-\pa Q(z)](z-w)+\pa^2Q(w)(z-w)^2+\dots=-\pa\bp Q(w)|z-w|^2+\dots
\end{align*}
\ss\no
we get
\begin{align*}\frac{|\K_n^\#(z,w)|^2}{\K_n^\#(w,w)}&=\frac1\pi [(c+O(|z-w|)] ~ne^{-cn|z-w|^2+O(n|z-w|^3)}\\
&=H^{\langle w\rangle}_n(z)+O(n|z-w|),
\end{align*}
and the assertion follows.
\end{proof}

\begin{cor} \label{Cor3.4} If  $w\in S$ and $\delta(w)>2\delta_n$, then
$$\int_{\C\sm D(w,\delta_n)} B^{\langle w\rangle}_n(z)~dA(z)~\lesssim ~n\delta_n^3=o(1).$$
\end{cor}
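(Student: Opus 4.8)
The plan is to split the integral over $\C \setminus D(w,\delta_n)$ into two regions according to the two estimates available for $\ti\K_n$, and to use the fact that the Berezin kernel $B_n^{\langle w\rangle}$ determines a probability measure (so that its total mass is $1$). First I would write
$$\int_{\C\sm D(w,\delta_n)} B_n^{\langle w\rangle}(z)\,dA(z) = \int_{(\C\sm D(w,\delta_n))\cap S_n} B_n^{\langle w\rangle}(z)\,dA(z) + \int_{\C\sm S_n} B_n^{\langle w\rangle}(z)\,dA(z),$$
where $S_n$ denotes the $\delta_n$-neighbourhood of $S$ (or the region where $\delta(z) \le \delta_n$, whichever is cleaner). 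On the far region $\C\sm S_n$, the exterior estimate of Lemma \ref{LemExt} together with the growth assumption \eqref{E11} gives $\ti\K_n(z)=\K_n(z)\lesssim n^{-N}$ for any $N$; since $B_n^{\langle w\rangle}(z) = |\K_n(z,w)|^2/\K_n(w,w)$ and $\K_n(w,w)\asymp n$ (because $\delta(w)>2\delta_n$ puts $w$ well inside $S$, so $\K_n(w,w)\approx \K_n^\#(w,w)=n\Delta Q(w)/2\pi$), the Cauchy--Schwarz-type bound $|\K_n(z,w)|^2 \le \K_n(z,z)\K_n(w,w)$ reduces this to $\int_{\C\sm S_n}\K_n(z,z)\,dA(z)/n$, which is $O(n^{-N})$ after using the decay of $\K_n(z,z)$ and $\int_\C \K_n(z,z)\,dA(z)=n$ to control any tail at infinity. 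This piece is thus negligible.

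Next I would treat the annular region $A_n := (\C\sm D(w,\delta_n))\cap S_n$, which has area $\lesssim \delta_n^2$ (it is contained in a fixed bounded neighbourhood of $\pa S$ of width $O(\delta_n)$, plus the part of $S$ at distance $>\delta_n$ from $w$ — but the relevant near-$w$ contribution is what matters; away from $w$ inside $S$ one again uses that $B_n^{\langle w\rangle}$ is exponentially small, which follows from Theorem \ref{ThmK} applied to $|\K_n(z,w)|=|\K_n^\#(z,w)|+O(1)$ and the Gaussian-type decay $|\K_n^\#(z,w)|^2 \lesssim n^2 e^{-cn|z-w|^2}$ for $|z-w|\ge\delta_n$, giving $\lesssim n^2 e^{-c\log^2 n} = o(n^{-N})$ pointwise). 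Hence the only genuinely contributing part is the thin collar of $\pa S$ at distance $O(\delta_n)$ from $w$, but more simply: on all of $A_n$ we have the crude bound $B_n^{\langle w\rangle}(z)\lesssim n$ coming from $\K_n(z,z)\lesssim n$ (valid uniformly near $S$) and $\K_n(w,w)\asymp n$; integrating the bound $\lesssim n$ over a set of area $\lesssim \delta_n^2$ yields $\lesssim n\delta_n^2$, which is not quite good enough.

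To recover the sharper exponent $n\delta_n^3$ claimed, I would instead use Lemma \ref{Lem3.3} together with the concentration of the heat kernel. Write $\mu_B$ and $\mu_H$ for the probability measures $B_n^{\langle w\rangle}\,dA$ and $H_n^{\langle w\rangle}\,dA$ on $D(w,\delta_n)$, and recall $\mu_H(\C\sm D(w,\delta_n))\lesssim n^{-N}$. Since both are probability measures, $\mu_B(\C\sm D(w,\delta_n)) = 1 - \int_{D(w,\delta_n)} B_n^{\langle w\rangle}\,dA = \int_{D(w,\delta_n)}(H_n^{\langle w\rangle}-B_n^{\langle w\rangle})\,dA + \mu_H(\C\sm D(w,\delta_n))$. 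The second term is $O(n^{-N})$, and for the first term Lemma \ref{Lem3.3} gives $|H_n^{\langle w\rangle}(z)-B_n^{\langle w\rangle}(z)|\lesssim n\delta_n$ on $D(w,\delta_n)$, whose area is $\pi\delta_n^2$; multiplying, $\int_{D(w,\delta_n)}|H_n^{\langle w\rangle}-B_n^{\langle w\rangle}|\,dA \lesssim n\delta_n\cdot \delta_n^2 = n\delta_n^3$. Combining, $\int_{\C\sm D(w,\delta_n)}B_n^{\langle w\rangle}\,dA \lesssim n\delta_n^3 + n^{-N} \lesssim n\delta_n^3$, and since $n\delta_n^3 = n\cdot (\log^2 n/\sqrt n)^3 = \log^6 n/\sqrt n \to 0$, this is $o(1)$, as claimed. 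The main obstacle is arranging the bookkeeping so that the far tail (outside a fixed neighbourhood of $S$) and the part of $S$ far from $w$ are absorbed into the $n^{-N}$ error before invoking the probability-mass identity; once one commits to comparing with $\mu_H$ via total mass $1$, the estimate is immediate from Lemma \ref{Lem3.3}.
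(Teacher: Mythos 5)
Your final paragraph is exactly the paper's proof: write $\int_{\C\sm D(w,\delta_n)}B_n^{\langle w\rangle}=\int_{\C\sm D(w,\delta_n)}H_n^{\langle w\rangle}+\int_{D(w,\delta_n)}(H_n^{\langle w\rangle}-B_n^{\langle w\rangle})$ using that both kernels have total mass $1$, then bound the second term by $n\delta_n\cdot\delta_n^2$ via Lemma \ref{Lem3.3}. The first two paragraphs of your proposal are unnecessary detours (as you yourself note, they only give $n\delta_n^2$); the argument you settle on is correct and is the same as the paper's.
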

\begin{proof} We write $D_n=D(w,\delta_n)$ and notice that
$$\int_{\C\sm D_n} B_n^{\langle w\rangle}=1-\int_{D_n}B_n^{\langle w\rangle}=1-\int_{D_n} H_n^{\langle w\rangle}+\int_{D_n}(H_n^{\langle w\rangle}-B_n^{\langle w\rangle})=\int_{\C\sm D_n}H_n^{\langle w\rangle}+\int_{D_n}(H_n^{\langle w\rangle}-B_n^{\langle w\rangle}).$$
The statement now follows from Lemma \ref{Lem3.3}. \end{proof}

\begin{prop} \label{Prop3.5} If $v$ is uniformly Lipschitz continuous on $\C$, then
$\epsilon^1_n[v]\to 0$ as $n\to\infty$.\end{prop}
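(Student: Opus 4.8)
The plan is to start from the defining relation \eqref{eps1} for $\eps^1_n[v]$ (with $h=0$, so no tildes) and rewrite it so that the intended cancellation becomes visible. By the reproducing property $\K_n(w,w)=\int_\C|\K_n(z,w)|^2\,dA(z)$ one has $\sigma_n(\pa v)=\frac1n\iint_{\C^2}\pa v(w)\,|\K_n(z,w)|^2$, and since $\frac1n|\K_n(z,w)|^2=u_n(w)\,B^{\langle w\rangle}_n(z)$, subtracting from \eqref{eps1} gives
\[
\eps^1_n[v]=\int_\C\Phi_n(w)\,u_n(w)\,dA(w),\qquad
\Phi_n(w)=\int_\C\bigl(\tfrac{v(z)-v(w)}{z-w}-\pa v(w)\bigr)\,B^{\langle w\rangle}_n(z)\,dA(z).
\]
Here $\pa v$ denotes the a.e.\ derivative of the Lipschitz function $v$ (Rademacher's theorem), so $\pa v\in L^\infty$ and everything makes sense; and since $B^{\langle w\rangle}_n\,dA$ is a probability measure (or zero) and the bracket is bounded by $2\,\mathrm{Lip}(v)$, one has $|\Phi_n(w)|\le 2\,\mathrm{Lip}(v)$ for every $w$. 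The whole point will be that $B^{\langle w\rangle}_n\,dA$ concentrates at $w$ on the scale $n^{-1/2}$ and that, against the rotationally symmetric limiting Gaussian, the difference quotient $\tfrac{v(z)-v(w)}{z-w}$ averages exactly to $\pa v(w)$.

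First I would show that $\Phi_n(w)\to0$ for a.e.\ $w\in\Int S$. Fix such a $w$ which is also a point of differentiability of $v$ (a.e.\ $w$ qualifies, as $\pa S$ is a null set); for $n$ large $\delta(w)>2\delta_n$. I split the $z$-integral at $|z-w|=\delta_n$: on $\C\setminus D(w,\delta_n)$ the integrand is bounded by $2\,\mathrm{Lip}(v)$, so Corollary~\ref{Cor3.4} bounds that part by $\lesssim\mathrm{Lip}(v)\,n\delta_n^3=o(1)$; on $D(w,\delta_n)$ I replace $B^{\langle w\rangle}_n$ by the heat kernel $H^{\langle w\rangle}_n$ at a cost $\lesssim\mathrm{Lip}(v)\cdot n\delta_n\cdot\delta_n^2=o(1)$ by Lemma~\ref{Lem3.3}. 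Using $\int_{\C\setminus D(w,\delta_n)}H^{\langle w\rangle}_n\lesssim n^{-N}$ and $\int_\C H^{\langle w\rangle}_n=1$, this reduces the claim to $\int_\C\tfrac{v(z)-v(w)}{z-w}\,H^{\langle w\rangle}_n(z)\,dA(z)\to\pa v(w)$. With $c=2\pa\bp Q(w)$, which is $>0$ since $Q$ is subharmonic on $S$ and $\Delta Q\ne0$ there by (A2), the substitution $z=w+\xi/\sqrt{cn}$ turns this integral into $\tfrac1\pi\int_\C\bigl(\sqrt{cn}\,\tfrac{v(w+\xi/\sqrt{cn})-v(w)}{\xi}\bigr)e^{-|\xi|^2}\,dA(\xi)$; the integrand is dominated by $\tfrac1\pi\mathrm{Lip}(v)\,e^{-|\xi|^2}$ and tends pointwise to $\tfrac1\pi\tfrac{\pa v(w)\,\xi+\bp v(w)\,\bar\xi}{\xi}\,e^{-|\xi|^2}$ by differentiability at $w$, and the limiting integral equals $\pa v(w)$ because $\tfrac1\pi\int_\C e^{-|\xi|^2}\,dA=1$ while $\tfrac1\pi\int_\C\tfrac{\bar\xi}{\xi}\,e^{-|\xi|^2}\,dA=0$. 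Dominated convergence then gives $\Phi_n(w)\to0$.

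Next I would control the part of the $w$-integral outside $\Int S$. On the collar $\{w\in S:\delta(w)\le2\delta_n\}$ one has $u_n\lesssim1$ by Lemma~\ref{LemExt} and area $\lesssim\delta_n$, while on $\C\setminus S$ one has $\int_{\C\setminus S}u_n\,dA\to0$ by Lemma~\ref{LemExt} together with the growth condition \eqref{E11}; since $|\Phi_n|\le 2\,\mathrm{Lip}(v)$ everywhere, both contributions are $o(1)$. For the remaining region $\{w\in\Int S:\delta(w)>2\delta_n\}$ I extend $\Phi_n$ by $0$ off this set: the resulting function tends to $0$ a.e.\ on $S$ by the previous step and is dominated by $2\,\mathrm{Lip}(v)\,\bigl(\sup_n\|u_n\|_{L^\infty(S)}\bigr)\1_S$, which is integrable because $|S|<\infty$, so dominated convergence makes this part $o(1)$ as well. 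Adding the three pieces yields $\eps^1_n[v]\to0$.

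The only genuinely non-routine point is the reduction in the second paragraph: passing from the Berezin kernel to the Gaussian (this is where Theorem~\ref{ThmK} and Lemma~\ref{Lem3.3} enter) and then extracting $\pa v(w)$ as the Gaussian average of $\tfrac{v(z)-v(w)}{z-w}$ when $v$ is merely Lipschitz, the rotational symmetry of the Gaussian being exactly what annihilates the antiholomorphic contribution $\bp v(w)\,\bar\xi/\xi$. Everything else — the positivity of $c$, the collar and exterior estimates for $u_n$, and the dominated-convergence bookkeeping — is routine given the kernel estimates already established.
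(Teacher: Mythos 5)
Your proof is correct and follows essentially the same route as the paper's: the same Berezin-kernel representation of $\eps_n^1[v]$, the same split of the $w$-integral at the collar $\{\delta(w)\le 2\delta_n\}$ and of the $z$-integral at $D(w,\delta_n)$ using Lemma \ref{Lem3.3} and Corollary \ref{Cor3.4}, and the same final step in which the rotational symmetry of the heat kernel annihilates the $\bp v(w)\,\overline{(z-w)}/(z-w)$ contribution. The one small difference is that at the last step the paper invokes $v\in C^1(S)$ (available since $v$ satisfies (i)--(iii) in all applications), whereas you handle genuinely Lipschitz-only $v$ via Rademacher's theorem and a dominated-convergence argument in $w$, which matches the literal hypothesis of the proposition.
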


\begin{proof} We represent the error term as follows:
$$\epsilon_n^1[v]=\int_{(w)} u_n(w)F_n(w)\quad ;
\quad F_n(w)=\int_{(z)}\left[\frac{v(z)-v(w)}{z-w}-\pa v(w)\right]~B^{\langle w\rangle}_n(z).$$
By the  assumption that $v$ is globally Lipschitzian, we have that
$$\babs{\int_{\{\delta(w)<2\delta_n\}} u_n(w)F_n(w)}~\lesssim~\int_{\{\delta(w)<2\delta_n\}} u_n(w)~=~o(1).$$
If $\delta(w)>2\delta_n$, then
$$\left|F_n(w)\right|\lesssim \left|\int_{z\in D(w, \delta_n)}\left[\frac{v(z)-v(w)}{z-w}-\pa v(w)\right]~B^{\langle w\rangle}_n(z)\right|+\const\int_{z\not\in D(w, \delta_n)}B^{\langle w\rangle}_n(z),$$
where the last term is $o(1)$ by Corollary \ref{Cor3.4}. Meanwhile, the integral over $D(w, \delta_n)$ is bounded by
$$
\left|\int_{z\in D(w, \delta_n)}\left[\frac{v(z)-v(w)}{z-w}-\pa v(w)\right]~H^{\langle w\rangle}_n(z)\right|
+\const\int_{D(w, \delta_n)}|B^{\langle w\rangle}_n(z)-H^{\langle w\rangle}_n(z)|,$$
where we can neglect the second term (see Lemma \ref{Lem3.3}).
Finally,
$$\frac{v(z)-v(w)}{z-w}-\pa v(w)=\bp v(w)~\frac{\bar z-\bar w}{z-w}+o(1),$$
(this is where we use the assumption $v\in C^1(S)$), so the bound of the first term is $o(1)$ by the radial symmetry of the heat kernel.
\end{proof}

\subsection{Second error term} We shall prove the following proposition.

\begin{prop}\label{Prop3.6} If $v$ is uniformly Lipschitzian, then
$$\epsilon^2_n[v]:=   - \frac 1 2 \int \bp v~\frac{ D_n^2}n   \to 0,\qquad \text{as}\quad  n\to\infty.$$\end{prop}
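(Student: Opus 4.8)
The plan is to show $\epsilon_n^2[v] = -\frac12\int \bp v \cdot D_n^2/n \to 0$ by establishing a pointwise/near-pointwise bound on $D_n(z) = \nu_n(k_z)$ that makes $D_n^2/n$ integrable-to-zero against the bounded, compactly supported density $\bp v$. The natural strategy is to split $\C$ into the far exterior $\{\delta(z) > \delta_n\}$ (or a fixed-size neighborhood of $S$), the intermediate exterior collar $\{0 < \delta(z) \le \delta_n\}$, and the interior region $S$ (or $\{\delta(z) > 2\delta_n\} \cap S$). On the far exterior, Lemma~\ref{LemExt} together with the growth assumption \eqref{E11} and the quadratic separation $Q - \check Q \gtrsim \delta^2$ from Proposition~\ref{Prop1} gives that $\ti u_n$, hence $|\nu_n|$ restricted there, is super-polynomially small, so $D_n$ is tiny there; but one must be careful because $D_n(z)$ is a nonlocal quantity (an integral of $u_n - u$ over all of $\C$), so the right object to control is $|D_n(z)|$ for $z$ in the support of $\bp v$, using the global mass bounds.

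The key estimate I would aim for is something like $|D_n(z)| \lesssim \log n$, or at worst $|D_n(z)| = o(\sqrt n)$ uniformly, perhaps with an improved bound $|D_n(z)| \lesssim \log n$ on the bulk and a separate treatment near $\pa S$. To get this, recall $D_n = \nu_n(k_z)$ and $\bp_z D_n = \pi(u_n - u)\cdot n = \pi \, n(u_n-u)$ — wait, more precisely $\bp_z D_n = n\pi(u_n - u)$ from \eqref{E2.8} with $h=0$. So $D_n$ is essentially the Cauchy transform of the signed measure $\nu_n = n(u_n - u)\,d^2z$. The total variation of $\nu_n$ is the quantity that needs bounding: near $\pa S$, $u_n$ transitions from $\approx u$ inside to $\approx 0$ outside over a scale $\sim 1/\sqrt n$, contributing $O(1)$ mass locally times the number of such scales along the boundary, giving $\|\nu_n\| \lesssim \sqrt n$ naively. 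That only gives $|D_n| \lesssim \sqrt n$ pointwise and $D_n^2/n \lesssim 1$ — not good enough by itself. The refinement must come from cancellation: the Cauchy transform of the boundary layer largely cancels because $\nu_n$ is, to leading order, a derivative along $\pa S$ of something bounded. Concretely, one expects $D_n$ to converge (as a function, off $\pa S$) to $\nu(k_z) = $ Cauchy transform of the limiting $\nu$ from Theorem~\ref{MT1}, which is a nice finite distribution; so $D_n$ should be $O(1)$ in the bulk and blow up at most logarithmically (from the line-charge part of $\nu$, whose Cauchy transform has a $\log$ singularity) as $z \to \pa S$.

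So the concrete steps: (1) use Lemma~\ref{LemExt} plus growth to dispose of the region far from $S$, where $D_n$ restricted to $\text{supp}\,\bp v \cap \{\delta > \delta_n\}$ contributes negligibly — actually more carefully, show $\bp v$ against $D_n^2/n$ over $\{\delta(z) > \delta_n\}$ is $o(1)$ using that $u_n - u$ is tiny there so the local contribution to $D_n$'s derivative vanishes and the nonlocal part is controlled; (2) on the collar $\{0 < \delta(z) \le C\delta_n\}$, use the crude bound $|D_n| \lesssim \sqrt n$ (from $\|\nu_n\|\lesssim \sqrt n$, or better from a direct estimate) together with the fact that this collar has area $\lesssim \delta_n = \log^2 n/\sqrt n$, so $\int_{\text{collar}} |\bp v| \, D_n^2/n \lesssim \|\bp v\|_\infty \cdot \delta_n \cdot n / n = o(1)$ — this is where the precise power matters and I'd want $|D_n| \lesssim \sqrt n$, giving collar contribution $\lesssim \delta_n \cdot n/n \to 0$, which works; (3) on the interior bulk $\{z \in S, \delta(z) > C\delta_n\}$, one needs $|D_n| \lesssim \log n$ (or $o(\sqrt n)$), which should follow from the interior kernel asymptotics (Theorem~\ref{ThmK}) showing $u_n - u = O(1/n)$ pointwise in the deep bulk, hence the contribution to $D_n$ from the bulk is $O(1)$, while the boundary-layer contribution to $D_n(z)$ for bulk $z$ is $O(\log n)$ by a Cauchy-transform-of-a-layer estimate. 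The main obstacle is precisely step (3)/the boundary-layer Cauchy transform bound: one must show that integrating the $O(\sqrt n)$-worth of signed mass concentrated in the $O(1/\sqrt n)$ collar against $k_z$ produces only $O(\log n)$ — this requires exploiting the sign structure (approximate cancellation in the normal direction across $\pa S$), not just the total variation. If one can only get $|D_n| \lesssim \sqrt n$ everywhere, then the bulk term $\int_S |\bp v| D_n^2/n \lesssim \int_S |\bp v| \cdot n/n = O(1)$ fails to vanish, so the refined bulk bound is genuinely necessary. I would therefore structure the proof around a lemma: $\|D_n\|_{L^\infty(K)} = o(\sqrt n)$ for $K$ any compact set, with the boundary collar handled separately by the area bound; and the heart of that lemma is the layer-cancellation estimate near $\pa S$.
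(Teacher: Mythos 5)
Your decomposition into far exterior, thin collar, and interior bulk is the right skeleton, and you correctly identify both that the crude bound $|D_n|\lesssim\sqrt n$ (up to logs) suffices in the collar because of its small area, and that it fails in the bulk, where a genuinely better bound is needed. But the proposal stops exactly at the point where the actual work begins: you state that the refined bulk bound ``should follow from \dots a Cauchy-transform-of-a-layer estimate'' exploiting cancellation in the sign structure of $n(u_n-u)$ across $\pa S$, and you acknowledge this is ``the heart of the lemma,'' yet you give no mechanism for proving it. Appealing to the expected convergence $D_n\to\nu(k_z)$ is circular, since that convergence is a consequence of Theorem \ref{MT1}, whose proof requires $\eps_n^2[v]\to0$. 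Nothing in the kernel asymptotics of Theorem \ref{ThmK} or Lemma \ref{LemExt} directly yields the cancellation of the $O(\sqrt n)$ signed boundary-layer mass; some additional input is indispensable.

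The paper supplies that input by a bootstrap through Ward's identity itself, not by a direct analysis of the layer. Writing $D_n=C_n+R_n$ with $C_n$ the Cauchy transform of $(\K_n-\K_n^\#)\1_{B_n}$ supported in the belt $B_n=\{\delta<2\delta_n\}$, the crude Lemma \ref{Lem3.7} first gives $\eps_n^2[\psi]\lesssim\log^6 n$ for Lipschitz $\psi$ supported inside $S$; feeding this back into the identity \eqref{limitward} gives $|\int\psi\, C_n\,\Delta Q|\lesssim\log^6 n$, and since $C_n$ is \emph{analytic} in the deep interior a mean-value argument with a cone-shaped $\psi$ converts this weak bound into the pointwise estimate $|C_n(z)|\lesssim\log^6 n/\delta(z)^3$ (Lemma \ref{Lem3.8}). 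Near and outside $\pa S$ no pointwise bound of the strength you hope for ($O(\log n)$) is proved or needed: instead one establishes $|\im[\nu\,C_n]|\prec\sqrt{n\gamma_n}$ on the contour $\Gamma_n$ at distance $\gamma_n=n^{-1/10}$ by reflecting the interior estimate across $\pa S$ and estimating the Cauchy-kernel difference directly, and then recovers $\re C_n$ in $L^2(\Gamma_n)$ via boundedness of harmonic conjugation on $H^2$ after mapping conformally to the exterior disk (Lemma \ref{Lem3.9}). Green's formula then reduces $\eps_n^2[v]$ to $\frac1n\|D_n\|^2_{L^2(\mathrm{int}\,\Gamma_n)}+\frac1n\|D_n\|^2_{L^2(\Gamma_n)}$, each $o(1)$. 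Without the self-improvement step and the harmonic-conjugate argument (or a substitute for them), your outline does not close; also note that your derivation of the crude bound from ``$\|\nu_n\|\lesssim\sqrt n$'' alone is insufficient, since the Cauchy kernel is unbounded --- one needs the density bound $|\K_n-\K_n^\#|\lesssim n$ together with the estimate $\int_{B_n}|z-\zeta|^{-1}\,d^2\zeta\lesssim\delta_n|\log\delta_n|$ for the thin belt, as in Lemma \ref{Lem3.7}.
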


\no The proof will involve certain  estimates of the function
$$D_n(z)=\int_{\C}\frac{\K_n(\zeta)-\K_n^\#(\zeta)}{z-\zeta}~d^2\zeta.$$
(Here $\K^\#(\zeta)=nu\cdot 1_S$.) It will be convenient to split the integral into two parts:
$$D_n(z)=C_n(z)+R_n(z):=\lpar \int_{B_n}+\int_{\C\sm B_n}\rpar \frac{\K_n(\zeta)-\K_n^\#(\zeta)}{z-\zeta}~d^2\zeta,$$
where
$$B_n=\{z:~\delta(z)<2\delta_n\}.$$
By Theorem \ref{ThmK} and Lemma \ref{LemExt} we have $|\K_n-\K^\#_n|\lesssim1$ in $\C\sm B_n$, and therefore
$$|R_n|\lesssim 1.$$
Hence we only need to estimate
$C_n$, the Cauchy transform of a real measure supported in $B_n$ -- a narrow "ring" around $\pa S$.
We start with a simple uniform bound.

\begin{lem} \label{Lem3.7} The following estimate holds,
$$\left\|D_n\right\|_{L^\infty}\lesssim \sqrt n\log^3 n.$$\end{lem}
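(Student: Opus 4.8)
The plan is to exploit the decomposition $D_n = C_n + R_n$ introduced just above this lemma, for which $|R_n| \lesssim 1$ has already been established; it therefore suffices to show $\|C_n\|_{L^\infty} \lesssim \sqrt n\,\log^3 n$, where $C_n$ is the Cauchy transform of the signed density $(\K_n - \K_n^\#)\,\mathbf 1_{B_n}$ and $B_n = \{z : \delta(z) < 2\delta_n\}$ is a thin ring about $\pa S$. The first step is a crude pointwise bound on the density: on $B_n$ one has $|\K_n(\zeta) - \K_n^\#(\zeta)| \lesssim n$, since $\K_n(\zeta) \lesssim n e^{-2n(Q-\check Q)(\zeta)} \le n$ by Lemma \ref{LemExt}, while $\K_n^\#(\zeta) = n u(\zeta)\mathbf 1_S(\zeta)$ with $u = \tfrac1{2\pi}\Delta Q$ bounded near $S$ by (A1). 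Consequently
$$|C_n(z)| \;\lesssim\; n\cdot\sup_{z\in\C}\int_{B_n}\frac{d^2\zeta}{|z-\zeta|},$$
and the whole estimate is reduced to a purely geometric bound on $I_n := \sup_{z}\int_{B_n}|z-\zeta|^{-1}\,d^2\zeta$.

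The core of the argument is the estimate $I_n \lesssim \delta_n\log n$. By Proposition \ref{Prop1}, $\pa S$ is a fixed $C^\omega$ Jordan curve of finite length $\ell$ and bounded curvature, and $\delta_n \to 0$; hence for $n$ large $B_n$ is a genuine tubular neighborhood carrying Fermi (arclength–normal) coordinates $(s,t)\in[0,\ell)\times(-2\delta_n,2\delta_n)$, with Jacobian bounded above and below by constants independent of $n$. Moreover, by the bi-Lipschitz equivalence of arclength distance and chordal distance on a smooth Jordan curve, $|z-\zeta| \gtrsim |s-s_0|$ whenever $z$ lies near the tube and $|s-s_0|\ge\delta_n$, where $s_0$ is the arclength parameter of a nearest point of $\pa S$ to $z$ (for $z$ far from $B_n$ the bound $\int_{B_n}|z-\zeta|^{-1}\lesssim \Area(B_n)/\dist(z,B_n)\lesssim\delta_n/\dist(z,B_n)$ is even smaller, so the worst $z$ lie in or near $B_n$). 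Splitting the integral, the part over $\{|z-\zeta|<\delta_n\}$ is at most $\int_{|w|<\delta_n}|w|^{-1}d^2w \lesssim \delta_n$, and the part over $\{|z-\zeta|\ge\delta_n\}$ is at most $\int_{-2\delta_n}^{2\delta_n}\!\int_{\delta_n}^{\ell}\frac{dr\,dt}{r}\lesssim \delta_n\log(\ell/\delta_n)\lesssim\delta_n\log n$, since $\log(\ell/\delta_n) = \tfrac12\log n + O(\log\log n)$.

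Putting the pieces together, $I_n \lesssim \delta_n\log n = \log^3 n/\sqrt n$, whence $|C_n(z)| \lesssim n\cdot\log^3 n/\sqrt n = \sqrt n\,\log^3 n$ uniformly in $z$, and finally $\|D_n\|_{L^\infty} \le \|C_n\|_{L^\infty} + \|R_n\|_{L^\infty} \lesssim \sqrt n\,\log^3 n$. The only genuinely delicate point is the geometric estimate of $I_n$: one must ensure that the constants in the Fermi-coordinate change and in the comparison $|z-\zeta|\gtrsim|s-s_0|$ are uniform in $n$, which is precisely what the $C^\omega$-regularity of $\pa S$ together with $\delta_n\to 0$ guarantees; everything else is a routine computation.
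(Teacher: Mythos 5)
Your proof is correct and takes essentially the same route as the paper: reduce via $D_n=C_n+R_n$ to the crude bound $|\K_n-\K_n^\#|\lesssim n$ times the geometric estimate $\sup_z\int_{B_n}|z-\zeta|^{-1}\,d^2\zeta\lesssim\delta_n|\log\delta_n|\asymp\log^3 n/\sqrt n$. The paper shortcuts the coordinate change by simply replacing $B_n$ with a model rectangle $\{|x|<1,\ |y|<\delta_n\}$ and taking $z=0$; your Fermi-coordinate argument is a more carefully spelled-out version of the same local computation.
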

\begin{proof} This follows from the  trivial bound $|\K_n-\K^\#_n|\lesssim n$ and the following  estimate of the integral
$$\int_{B_n}\frac{d^2 \zeta}{|z-\zeta|}.$$
Without loosing generality, we can assume that $z=0$ and replace $B_n$ by the rectangle $|x|<1$, $|y|<\delta_n$. We have
$$\int_{B_n}\frac{d^2\zeta}{|\zeta|}=\int_{-1}^1dx\int_{-\delta_n}^{\delta_n}\frac{dy}{\sqrt{ x^2+y^2}}=I+II,$$
where $I$ is the integral over $\{|x|<\delta_n\}$ and $II$ is the remaining term. Passing to polar coordinates we get
$$I\asymp \int_0^{\delta_n}\frac{rdr}{r}=\delta_n,$$
and
$$II\lesssim \delta_n\int_{\delta_n}^1\frac{dx}x\asymp \delta_n|\log\delta_n|.$$
\end{proof}

\bs\no Lemma \ref{Lem3.7} gives us the following estimate of the second error term,
\begin{equation}\label{E3.2} \babs{\varepsilon^2_n[v]}\lesssim \log^6n\|v\|_{\rm Lip},\end{equation}
which comes rather close but is still weaker than what we want.  Our
strategy will be to use \eqref{E3.2} and iterate the argument with Ward's identity. This will give a better estimate
in the \textit{interior} of the droplet.

\begin{lem} \label{Lem3.8} We have that
$$|C_n(z)|\lesssim \frac{\log^6n}{\delta(z)^3},\qquad z\in  S.$$\end{lem}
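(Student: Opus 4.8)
The plan is to bootstrap the crude bound \eqref{E3.2} through the limit Ward identity \eqref{limitward}, localized at an interior point at the scale of its distance to the boundary. Fix $z_0\in\Int S$, set $d=\delta(z_0)$, and note first that if $d<4\delta_n$ the assertion is immediate from Lemma \ref{Lem3.7} together with $|R_n|\lesssim 1$: then $|C_n(z_0)|\le|D_n(z_0)|+|R_n(z_0)|\lesssim\sqrt n\log^3 n$, and this is $\lesssim\log^6 n/d^3$ because $\log^6 n/d^3\gtrsim n^{3/2}$ in that range. So assume $d\ge 4\delta_n$; then $D(z_0,d/2)\subset\Int S$, this disc is disjoint from the ring $B_n$ (since $\delta\ge d/2\ge 2\delta_n$ on it), and since $\bp C_n=\pi(\K_n-\K_n^\#)\mathbf 1_{B_n}$, the function $C_n$ is holomorphic on $D(z_0,d/2)$. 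I would then pick a fixed nonnegative radial bump $\psi_0$ supported in $D(0,\tfrac12)$ with $\int\psi_0=1$, put $\phi_0(z)=d^{-2}\psi_0((z-z_0)/d)$, and define $v=\phi_0/\pa\bp Q$. By assumption (A2) $\Delta Q\ne0$ on $S$, and $L=\log\Delta Q$ is $C^\omega$ near $S$, so $1/\pa\bp Q$ is smooth with bounded derivatives on a neighbourhood of $S$; hence $v$ is supported in $D(z_0,d/2)\subset\Int S$, satisfies conditions (i)--(iii), and (using $d\le\diam S\lesssim 1$) obeys $\|v\|_{\rm Lip}\lesssim d^{-3}$ and $\|v\|_{C^2}\lesssim d^{-4}$.

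For this $v$ the integral over $\C\sm S$ in \eqref{limitward} vanishes, and since $v\,\pa\bp Q=\phi_0$ the left-hand side of \eqref{limitward} (with $h=0$) equals $\tfrac2\pi\int_S\phi_0 D_n=\tfrac2\pi\int\phi_0(C_n+R_n)=\tfrac2\pi C_n(z_0)+O(1)$, using the mean value property $\int\phi_0 C_n=C_n(z_0)$ (valid because $\phi_0$ is radial about $z_0$ and $C_n$ is holomorphic on $\supp\phi_0$) and $\int|\phi_0|\,|R_n|\lesssim 1$. It remains to bound the right-hand side of \eqref{limitward}. Writing $\pa v=(\pa\phi_0)/\pa\bp Q+\phi_0\,\pa(1/\pa\bp Q)$ and using that $u/\pa\bp Q\equiv\tfrac2\pi$ is constant on $S$ (so pairing the first summand with $u$ gives $\tfrac2\pi\int\pa\phi_0=0$), one gets $|\sigma(\pa v)|\lesssim\|\phi_0\|_{L^1}\lesssim 1$. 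The term $\eps_n^2[v]$ is exactly what the already-established bound \eqref{E3.2} controls: $|\eps_n^2[v]|\lesssim\log^6 n\,\|v\|_{\rm Lip}\lesssim\log^6 n/d^3$. For $\eps_n^1[v]$ one runs the proof of Proposition \ref{Prop3.5} while tracking the norms of $v$ and the fact that $\supp v$ lies at distance $\gtrsim d$ from $\pa S$: the region $\{\delta(w)<2\delta_n\}$ contributes $\lesssim\|v\|_\infty d^{-1}\int_{\{\delta(w)<2\delta_n\}}u_n\lesssim d^{-3}\delta_n$, while for $\delta(w)>2\delta_n$ Lemma \ref{Lem3.3}, Corollary \ref{Cor3.4} and the cancellation of the radial term give $|F_n(w)|\lesssim\|v\|_{C^2}n^{-1/2}+\|v\|_{\rm Lip}n\delta_n^3$, which after integration against $u_n$ over its $O(d^2)$-area effective support gives a total $\lesssim d^{-3}\delta_n+d^{-2}n^{-1/2}+d^{-3}n\delta_n^3$; since $\delta_n=\log^2 n/\sqrt n$ and $n\delta_n^3=\log^6 n/\sqrt n$ and $d\lesssim 1$, each of these is $\lesssim\log^6 n/d^3$. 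Finally the $o(1)$-term $(\sigma-\sigma_n)(\pa v/2)$ is, after using $u_n-u=\tfrac1{n\pi}\bp D_n$ on $\Int S$ and one integration by parts, bounded by $\tfrac1{2n\pi}\int|\bp\pa v|\,|D_n|=O(n^{-1/2}d^{-2}\log^3 n)$ via Lemma \ref{Lem3.7}. Collecting these bounds yields $|C_n(z_0)|\lesssim\log^6 n/d^3$, which is the assertion.

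The one genuinely new ingredient, and the main obstacle, is the quantitative version of Proposition \ref{Prop3.5}: one has to re-examine that proof and make explicit the dependence of $\eps_n^1[v]$ on $\|v\|_{C^2}$ and on $\dist(\supp v,\pa S)$. Everything else is bookkeeping, and the bookkeeping succeeds precisely because $S$ is bounded (so the factor $d\lesssim 1$ absorbs all lower powers of $d$) and because the single term of critical size $d^{-3}$, namely $\eps_n^2[v]$, is already tamed by \eqref{E3.2}. In short, Lemma \ref{Lem3.8} upgrades the global $L^\infty$ bound \eqref{E3.2} to the interior pointwise estimate $|C_n(z)|\lesssim\log^6 n/\delta(z)^3$ by a single pass through the localized Ward identity.
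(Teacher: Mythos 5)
Your proof is correct and follows essentially the same route as the paper's: both localize the Ward identity at $z$ with a test function of the form (bump supported in a disc of radius $\sim\delta(z)$ about $z$) divided by $\pa\bp Q$, extract $C_n(z)$ via the mean value property of the holomorphic function $C_n$, and control the dominant error $\eps_n^2$ by \eqref{E3.2}. The only differences are cosmetic: the paper uses a Lipschitz tent function of norm $\asymp 1$ (so the factor $\delta^3$ appears on the left side of the identity), whereas you use a unit-mass $C^2$ bump (pushing $d^{-3}$ into the error terms), and your quantitative reworking of Proposition \ref{Prop3.5} for $\eps_n^1$ is more than is needed, since the trivial bound $|\eps_n^1[v]|\lesssim\|v\|_{\rm Lip}\lesssim d^{-3}$ already suffices.
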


\begin{proof} Let $\psi$ be a function of Lipschitz norm less than 1 supported inside the droplet. Then
we have  $$\varepsilon^{1}_n[\psi]\lesssim 1, \qquad \varepsilon^{2}_n[\psi]\lesssim \log^6n, $$
where the constants don't depend on $\psi$. (The first estimate follows from Proposition \ref{Prop3.5}, and the second one is just \eqref{E3.2}). This means that the error $\eps_n:=\eps_n^1+\eps_n^2$
in the identity \eqref{limitward} is bounded by  $\log^6n$ for all such $\psi$, i.e. (since $\d Q=\d\check{Q}$ a.e. on $S$),
$$\left|\int \psi D_n \Delta Q \right|= O(1)+ \left|\varepsilon_n[\psi]\right|\lesssim \log^6n,
$$ and therefore,
$$\left|\int \psi C_n \Delta Q\right|\lesssim \log^6n$$
For $z\in S$ with $\delta(z)>4\delta_n$, we now set $2\delta=\delta(z)$ and consider the function
$$\psi(\zeta)=\max\left\{\frac{\delta-|\zeta-z|}{\Delta Q(\zeta)},~0\right\}.$$
Then $\psi$ has Lipschitz norm $\asymp 1$, and by analyticity of $C_n$ we have the mean value identity
$$\int \psi C_n \Delta Q=2\pi C_n(z)\int_0^\delta(\delta-r)rdr=\pi C_n(z)\delta^3/3.$$
We conclude that
$|C_n(z)|\lesssim \delta^{-3}\log^6n.$
\end{proof}

\bs\no Finally, we need an  estimate of $C_n$ in the \textit{exterior} of the droplet. This will be done in the next subsection  by reflecting  the previous interior estimate in the curve $\Gamma:=\pa S$.

Let us fix some sufficiently small positive number, e.g. $\varepsilon=\frac1{10}$ will do, and define
$$\gamma_n=n^{-\varepsilon}.$$
Denote
$$\Gamma_n=\{\zeta+\gamma_n\nu(\zeta):~\zeta\in\Gamma\},$$
where
 $\nu(\zeta)$ is the unit normal vector to $\Gamma$ at $\zeta\in\Gamma$ pointing outside from $S$.
We will write int$\Gamma_n$ and ext$\Gamma_n$ for the respective components of $\C\setminus\Gamma_n$. In
the following, the notation
$a\prec b$ will mean
inequality up to a multiplicative constant factor times some power of $\log n$ (thus e.g. $1\prec \log^2 n$).

Let $L^2(\Gamma_n)$ be the usual $L^2$ space of functions on $\Gamma_n$ with respect to arclength measure.
We will use the following lemma.

\begin{lem} \label{Lem3.9} We have that
$$\|C_n\|^2_{L^2(\Gamma_n)}\prec n\gamma_n.$$\end{lem}

\ms\no Given this estimate, we can complete the proof of Proposition \ref{Prop3.6} as follows.

\begin{proof}[Proof of Proposition \ref{Prop3.6}] Applying Green's formula to the expression for $\eps_n^2[v]$ (see
\eqref{eps2}),
using that $D_n$ is, to negligible terms, analytic in ext$\Gamma_n$, we find that
$$\babs{\eps_n^2[v]}\lesssim \frac1n\|D_n\|^2_{L^2({\rm int}\Gamma_n)}+\frac1n\|D_n\|^2_{L^2(\Gamma_n)}+o(1).$$
The second term is taken care of by Lemma \ref{Lem3.9}. To estimate the first term
denote $$A_n=\{\delta(z)<\gamma_n\}.$$
The area of $A_n$ is $\asymp \gamma_n$, and in $S\sm A_n$ we have
$|D_n(z)|\prec \gamma_n^{-3}$ (Lemma \ref{Lem3.8}). We now apply the uniform bound $|D_n|\prec \sqrt n$ in $A_n$ (Lemma \ref{Lem3.7}).
 It follows that
$$\|D_n\|^2_{L^2({\rm int}\Gamma_n)}=\int_{A_n}+\int_{S\sm A_n}\prec n|A_n|+\gamma_n^{-6},$$
whence
$$\|D_n\|^2_{L^2({\rm int}\Gamma_n)}=o(n).$$
This finishes the proof of the proposition.
\end{proof}

\subsection{Proof of Lemma \ref{Lem3.9}} Let us first establish  the following fact:
\begin{equation}\label{fact}\left|\im \left[\nu(\zeta)~C_n(\zeta+\gamma_n\nu(\zeta))\right]\right|~\prec ~\sqrt{n\gamma_n},\qquad \zeta\in\Gamma.\end{equation}

\ms
\begin{proof} Without loss of generality, assume that $\zeta=0$ and $\nu(\zeta)=i$.

The tangent to $\Gamma$ at $0$ is horizontal, so $\Gamma$ is the graph of $y=y(x)$ where $y(x)=O(x^2)$ as $x\to0$. We will show that
\begin{equation}\label{E3.4}\left|\re[C_n(i\gamma_n)-C_n(-i\gamma_n)]\right|~\prec~\sqrt{n\gamma_n} .\end{equation}
This implies the desired estimate \eqref{fact}, because by Lemma \ref{Lem3.8}
$$ |C_n(-i\gamma_n)|\prec \gamma_n^{-3}\le \sqrt{n\gamma_n}.$$
To prove \eqref{E3.4} we notice that
$$I:=\re[C_n(i\gamma_n)-C_n(-i\gamma_n)]=\int_{B_n}\re\left[\frac{1}{z-i\gamma_n}-\frac{1}{z+i\gamma_n}\right]\rho_n(z)dA(z),$$
where we have put $\rho_n=\K_n-\K_n^\#$, viz. $|\rho_n|\prec n$.

\ms\no We next subdivide the belt $B_n=\{\delta(z)<2\delta_n\}$ into two parts:
$$B_n'=B_n\cap\{|x|\le\sqrt \gamma_n\},\qquad B''_n=B_n\sm B'_n.$$
Clearly,
\begin{equation}\label{E3.5}|I|\lesssim n\int_{B'_n}\left|\frac{1}{z-i\gamma_n}-\frac{1}{\bar z-i\gamma_n}\right|+n \int_{B''_n}\left|\frac{1}{z-i\gamma_n}-\frac{1}{ z+i\gamma_n}\right|     .\end{equation}

\ss\no
The integral over $B'_n$ in the right hand side of \eqref{E3.5} is estimated by
$$\int_{B'_n}\asymp \int_{B'_n}\frac{|y|}{x^2+\gamma_n^2}\lesssim |B'_n|\asymp \delta_n\sqrt\gamma_n$$
because if $z=x+iy\in B_n'$ then $|y|\lesssim x^2+\delta_n\le x^2+\gamma_n^2.$

\ms\no We estimate the integral over $B''_n$ in \eqref{E3.5} by
$$\int_{B''_n}\asymp \int_{B''_n}\frac{\gamma_n}{x^2}\prec \delta_n\gamma_n\int_{\sqrt\gamma_n}^1\frac{dx}{x^2}\asymp \delta_n\sqrt\gamma_n.$$
It follows that
$$|I|\prec n\delta_n\sqrt\gamma_n\prec\sqrt {n\gamma_n}.$$
This establishes \eqref{E3.4}, and, as a consequence, \eqref{fact}.
\end{proof}

\bs\no To finish the proof of Lemma \ref{Lem3.9} we denote by $\nu_n(\cdot) $ the outer unit normal of $\Gamma_n$. Using \eqref{fact}  and Lemma \ref{Lem3.7} we deduce that
$$|\im~\nu_n C_n|\prec \sqrt {n\gamma_n} \qquad {\rm on}\quad\Gamma_n.$$
Next let $\D_*$ be the exterior of the closed unit disk and
consider the conformal map $$\phi_n: ~{\rm ext}(\Gamma_n)\to \D_*,\qquad \infty\mapsto\infty.$$
We put
$$F_n=\frac{\phi_nC_n}{\phi'_n}.$$
Then $F_n$ is analytic in ext$(\Gamma_n)$ including infinity, and we  have
\begin{equation}\label{E3.6}\|\im F_n\|^2_{L^2(\Gamma_n)}\prec \sqrt {n\gamma_n}.\end{equation}
To see this note that
$$\im F_n=\frac{\im[\nu_nC_n]}{|\phi'_n|},$$
and recall that we have assumed that $\Gamma$ is regular (A3), which means that $\babs{\phi'_n}$ is bounded below by a positive constant.

Now note that $\phi_n(z)/\phi_n^\prime(z)=r_nz+O(1)$ as $z\to\infty$, where the $r_n$ are uniformly bounded. This gives
$$F_n(\infty)=r_n \int_{B_n}(\K_n-\K_n^\#)=r_n\int_{\C\sm B_n}(\K^\#_n-\K_n)= O(1),$$
where we have used Lemma \ref{LemExt} and Theorem \ref{ThmK} to bound the integrand. Therefore, by \eqref{E3.6}, since the harmonic
conjugation operator is bounded on $L^2(\Gamma_n)$,
$$\|\re F_n\|^2_{L^2(\Gamma_n)}\asymp\|\im F_n\|^2_{L^2(\Gamma_n)}+O(1)  \prec \sqrt {n\gamma_n}.$$
This completes the proof of Lemma \ref{Lem3.9}.

\section{Proof of the main formula} \label{Sec4}

\bs\no  In this section we will use the limit form of Ward's identity (Corollary \ref{Cor2.5}) to derive our
main formula \eqref{mainformula}: for every test function $f$ the limit $\tilde\nu(f):=\lim_{n\to\infty}\tilde\nu_n(f)$ exists
and equals
\begin{equation}\label{mfo}\tilde \nu(f)=\frac1{8\pi}\left[\int_S\Delta f+\int_S f\Delta L+\int_{\pa S} f\NN(L^S) ds\right]+\frac1{2\pi}\int_\C\nabla f^S\cdot\nabla h^S .\end{equation}

\subsection{Decomposition of $\bf  f$} The following statement uses our assumption that $\pa S$ is a (real analytic) Jordan curve.

\bs\begin{lem} Let $f\in C^\infty_0(\C)$. Then $f$ has the following  representation:
$$f=f_++f_-+f_0,$$
where

\ss\no (i) all three functions are smooth on $\C$;

\ss\no (ii) $\bp f_+=0$ and $\pa f_-=0$ in $\C\sm S$;

\ss\no (iii) $f_\pm=O(1)$ at $\infty$;

\ss\no (iv) $f_0=0$ on $\pa S$.
\end{lem}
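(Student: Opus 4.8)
The plan is to reduce the statement to a frequency splitting of the boundary values $f|_{\pa S}$ performed in a conformal coordinate. First I would use assumption (A3): since $\Gamma:=\pa S$ is a real-analytic Jordan curve, the Riemann map $\phi\colon\C\sm S\to\D_*$ normalized by $\phi(\infty)=\infty$ extends to a conformal map of a neighbourhood of $\Gamma$ onto a neighbourhood of $\T$, so that $\phi$ and $\phi^{-1}$ are $C^\infty$ (indeed real-analytic) up to the respective boundary curves. I would then transport the data, setting $g:=(f|_\Gamma)\circ\phi^{-1}$, a smooth function on $\T$.

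Next I would split $g$ by Fourier frequency. Writing $g(e^{i\th})=\sum_{k\in\Z}c_ke^{ik\th}$, the rapid decay of the coefficients shows that $g_+(e^{i\th}):=\sum_{k\le0}c_ke^{ik\th}$ and $g_-(e^{i\th}):=\sum_{k>0}c_ke^{ik\th}$ are again smooth on $\T$, that $F_+(w):=\sum_{k\le0}c_kw^{k}$ is holomorphic and $O(1)$ on $\D_*$ and smooth up to $\T$ with $F_+|_\T=g_+$, and that $G(w):=\sum_{k>0}\overline{c_k}w^{-k}$ is holomorphic on $\D_*$, vanishes at $\infty$, is smooth up to $\T$, and satisfies $\overline{G}|_\T=g_-$. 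Pulling these back, I would define on $\C\sm S$
$$f_+:=F_+\circ\phi,\qquad f_-:=\overline{G\circ\phi}.$$
Then $\bp f_+=0$ and $\pa f_-=0$ on $\C\sm S$, both $f_\pm$ are $O(1)$ at $\infty$, both are $C^\infty$ up to $\Gamma$ as compositions of maps smooth up to the boundary, and on $\Gamma$ one has $f_++f_-=(g_++g_-)\circ\phi=g\circ\phi=f$.

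To finish I would make $f_\pm$ globally smooth: since $\Gamma$ is a smooth curve and $f_\pm$ are smooth up to it from the exterior side, Seeley's (or Whitney's) extension theorem provides extensions of $f_+$ and $f_-$ to $C^\infty(\C)$ that agree with the original functions on $\C\sm S$, so (ii) and (iii) are unaffected. Finally I would put $f_0:=f-f_+-f_-$; it lies in $C^\infty(\C)$ by construction, which gives (i), and it vanishes on $\pa S$ by the matching identity above, which is (iv).

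The construction is soft and I do not anticipate a genuine obstacle; the only delicate point is regularity up to and across $\Gamma$, which is precisely where (A3) is used — for the boundary regularity of the Riemann map $\phi$ — in combination with the elementary facts that Fourier frequency-projection preserves $C^\infty(\T)$ and that a function smooth on one side of a smooth hypersurface extends smoothly to a neighbourhood of it.
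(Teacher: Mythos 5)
Your construction coincides with the paper's: you use the boundary‐regular Riemann map of $\C\setminus S$ onto the exterior disk (guaranteed by (A3)), transport $f|_{\partial S}$ to a smooth function on $\T$, split its Fourier series into the $k\le 0$ and $k>0$ parts, pull the resulting holomorphic/antiholomorphic pieces back, extend smoothly across $\partial S$, and set $f_0=f-f_+-f_-$. The only cosmetic difference is that the paper first extends $F_\pm$ and the Riemann map separately to $\C$ and then composes, whereas you compose on $\C\setminus S$ and then invoke Seeley/Whitney to extend the composite; both give the same result.
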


\begin{proof}  Consider the inverse conformal maps
$$\phi: \D_*\to \C\sm S,\qquad \psi:\C\sm S\to \D_*,\qquad \infty\mapsto\infty,$$ where $\D_*=\{|z|>1\}$. On the unit circle $\T$, we have
$$F:=f\circ \phi=\sum_{-\infty}^\infty a_n\zeta^n\in C^\infty(\T).$$
The functions  $$F_+(z)=\sum_{-\infty}^0 a_nz^n,\qquad F_-(z)=\sum_{1}^\infty\frac{ a_{n}}{\bar z^{n}}, \qquad (z\in \D_*)$$
are $C^\infty$ up to the boundary so we can
extend them to some smooth functions $F_\pm$ in $\C$. The conformal map  $\psi$ also extends to a smooth function  $\psi:\C\to\C$. It follows that
$$f_\pm:=F_\pm\circ\psi\in C^\infty_0(\C),  $$
and  $f_\pm$ satisfy (ii)--(iii). Finally, we  set
$$f_0=f-f_+-f_-.$$
\end{proof}

\bs\no{\bf Conclusion.} It is enough to prove the main formula \eqref{mfo} only for functions of the form $f=f_++f_-+f_0$ as in the last  lemma with an {\it additional}
assumption that $f_0$ is supported inside any given neighborhood of the droplet $S$.

\ms\no Indeed, either side of the formula \eqref{mfo} will not change if we "kill" $f_0$ outside the neighborhood. The justification is immediate by Lemma \ref{LemExt}.

\ms\no In what follows we will choose a neighborhood $O$ of $S$ such that the potential $Q$ is real analytic, strictly subharmonic in $O$, and
 $$\pa Q\ne\pa \check Q\quad{\rm in}\quad O\setminus S,$$ and will assume $\supp(f_0)\subset O$.

\bs\subsection{The choice of the vector field in Ward's identity} We will now compute the limit
$$\tilde \nu(f):=\lim\tilde \nu_n(f)$$
(and prove its existence) in the case where $$f=f_++f_0.$$ To apply the limit Ward identity
\begin{equation}\label{LWI}\frac2{\pi}\int_\C\left[v\pa\bp Q+\bp v(\pa Q-\pa \check Q)\right]\ti D_n\to-\frac12\sigma(\pa v)-2\sigma(v\pa h), \quad
(n\to\infty),\end{equation}
(see Corollary \ref{Cor2.5}), we set
$$v=v_++v_0,$$
where
$$v_0=\frac{\bp f_0}{\pa\bp Q}\cdot 1_S+
\frac{ f_0}{\pa Q-\pa \check Q}\cdot 1_ {\C\sm S},$$
and
$$v_+=\frac{\bp f_+}{\pa\bp Q}\cdot 1_S.$$
This gives
$$v=\frac{\bp f}{\pa\bp Q}\cdot 1_S+\frac{ f_0}{\pa Q-\pa \check Q}\cdot 1_ {\C\sm S}.$$
But in $\C\sm \pa S$ we have
$$v\pa\bp Q+\bp v\cdot \pa(Q-\check Q)=\bp f,$$
so comparing with \eqref{tni}, we find that
\begin{equation}\label{2tn}-2\tilde\nu_n(f)=\frac2{\pi}\int_\C\left[v\pa\bp Q+\bp v(\pa Q-\pa \check Q)\right]\ti D_n.\end{equation}
However, to justify that \eqref{LWI} holds, we must check that $v$ satisfies the conditions (i)-(iii) of Corollary \ref{Cor2.5}.

\ms
\begin{lem} The vector field $v$ defined above is Lip$(\C)$ and the restrictions of  $v$ to $S$ and to $S_*:= (\C\sm S)\cup\pa S$ are $C^\infty$.\end{lem}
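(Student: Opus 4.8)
The plan is to verify the three claimed regularity properties of $v=v_++v_0$ piece by piece, treating the interior $S$ and the closed exterior $S_*=(\C\sm S)\cup\pa S$ separately and then matching across $\Gamma=\pa S$. On $S$ the function is $v|_S=\bp f/(\pa\bp Q)$; since $f\in C^\infty_0(\C)$ and, by assumption (A1)–(A2) together with Proposition \ref{Prop1}, $Q$ is real analytic and strictly subharmonic (so $\pa\bp Q=\tfrac14\Delta Q\ne0$) in a neighborhood $O$ of $S$, the quotient is $C^\infty$ on a neighborhood of $S$ and in particular uniformly $C^\infty$ on $S$. On $S_*$ we have $v|_{S_*}=f_0/(\pa Q-\pa\check Q)$; here the only issue is the behavior as one approaches $\pa S$ from outside, where both numerator and denominator vanish.

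First I would handle the exterior quotient. Recall $\supp(f_0)\subset O$ and, by the choice of $O$, $\pa Q\ne\pa\check Q$ on $O\sm S$; moreover by Proposition \ref{Prop1} we have $Q-\check Q\asymp\delta(z)^2$ near $\pa S$, and since $\pa\check Q=(\pa Q)^S$ is smooth up to $\pa S$ from outside while $\pa Q$ is smooth (real analytic) across $\pa S$, the difference $g:=\pa Q-\pa\check Q$ is a $C^\infty$ function on $S_*$ that vanishes on $\pa S$ with $|g(z)|\asymp\delta(z)$ (one power of $\delta$, being the "derivative" of the $\delta^2$ estimate; alternatively, $g$ is smooth and vanishes to exactly first order on the real analytic curve $\Gamma$). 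Since $f_0$ also vanishes on $\pa S$ (property (iv) of the decomposition lemma) and is $C^\infty$, the ratio $f_0/g$ extends to a $C^\infty$ function on $S_*$: in boundary-adapted coordinates where $\Gamma=\{\rho=0\}$, write $f_0=\rho\,\tilde f_0$ and $g=\rho\,\tilde g$ with $\tilde f_0,\tilde g\in C^\infty$ and $\tilde g$ nonvanishing on $\Gamma$, so $f_0/g=\tilde f_0/\tilde g\in C^\infty$ near $\Gamma$; away from $\Gamma$, $g\ne0$ gives smoothness directly, and the compact support of $f_0$ controls the behavior at $\infty$. For $v_+$ there is no exterior contribution since $v_+=(\bp f_+/\pa\bp Q)1_S$ and $\bp f_+=0$ off $S$ by (ii) of the decomposition lemma, consistent with $v_+$ being supported in $S$.

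Next I would check the global Lipschitz continuity, i.e. that the interior and exterior pieces glue to a Lip$(\C)$ function. It suffices to show the two one-sided limits of $v$ at each point $\zeta\in\Gamma$ agree (then $v$ is continuous, and being $C^1$ on each closed side with bounded derivatives it is Lipschitz). From inside, $v(\zeta)=\bp f(\zeta)/(\pa\bp Q)(\zeta)$. From outside, $v(\zeta)=f_0(\zeta)/(\pa Q-\pa\check Q)(\zeta)=0/0$, which I evaluate by l'Hôpital along the inward normal direction: the limit is $\bp f_0(\zeta)/\bp(\pa Q-\pa\check Q)(\zeta)$. Now on $\Gamma$ one has $\pa\check Q=(\pa Q)^S$ with $\pa\check Q$ harmonic (hence $\bp\pa\check Q=0$) just outside $S$, so $\bp(\pa Q-\pa\check Q)\to\pa\bp Q$ at $\Gamma$ from outside, giving the outer limit $\bp f_0(\zeta)/(\pa\bp Q)(\zeta)$. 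Since $\bp f_+=0$ outside $S$ forces $\bp f=\bp f_0$ on $\Gamma$ (as $f=f_++f_0$ there, and the normal/tangential derivatives of $f_+$ from outside match those from a smooth extension), the two limits coincide. The main obstacle is precisely this matching computation: one must be careful that "evaluating $0/0$" is legitimate, which requires knowing $f_0$ and $\pa Q-\pa\check Q$ vanish to the \emph{same} (first) order on $\Gamma$ — this is where real analyticity of $Q$ and the regularity (A3) of $\pa S$, via $Q-\check Q\asymp\delta^2$ and $\pa\check Q=(\pa Q)^S\in W^{2,\infty}$ (Proposition \ref{Prop1}), are essential — and that the resulting smooth extension from the exterior has bounded derivatives up to $\Gamma$. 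The behavior at $\infty$ is immediate: $v$ is compactly supported since $f$ is and the denominators are bounded below on the compact set $O$.
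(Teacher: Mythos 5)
Your proposal is correct and follows essentially the same route as the paper's: verify smoothness of the restrictions to $S$ and $S_*$ separately, then match one-sided boundary values via normal derivatives to get continuity, hence Lipschitz. The paper proves smoothness of $v|_{S_*}$ by pulling back through a conformal map that sends a boundary arc to $\R$ and then checking by hand, via Taylor expansion, that $H=F/y$ has bounded derivatives of every order ($H'$, $H''$, \dots); you instead invoke the boundary Hadamard/Taylor factorizations $f_0=\rho\tilde f_0$ and $g=\rho\tilde g$ with $\tilde g$ nonvanishing on $\Gamma$, which packages the same Taylor-remainder calculation more cleanly but uses that $\Gamma$ is a smooth (here real-analytic) Jordan curve in exactly the same way. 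Your l'Hôpital step along the inward normal, reduced to $\bp f_0/\bp g$ using the vanishing of tangential derivatives on $\Gamma$, is the same computation as the paper's asymptotics $f_0(\zeta+\delta n)\sim 2\delta\,\bar n\,\bp f_0(\zeta)$ and $g(\zeta+\delta n)\sim 2\delta\,\bar n\,\pa\bp Q(\zeta)$; and your observation that $\bp f_+=0$ on $\Gamma$ (by continuity of the $C^\infty$ function $\bp f_+$) is what the paper dismisses as the trivial items (i$'$), (ii$'$). So the proof is sound and the strategy is the paper's; the only differences are presentational.
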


\ms\begin{proof} We need to check the following items:

\ms\no (i) $v|_{S_*}$ is smooth, and (i$'$) $v|_{S}$ is smooth;

\ms\no (ii) $v_0$ is continuous on $\pa S$, and (ii$'$) same for $v_+$.

\ms\no The items (i$'$) and (ii$'$) are of course trivial. (E.g., $\bp f_+\cdot 1_S=\bp f_+$.)

\bs\no Proof of (i). We have $v=f_0/g$ in $\C\sm S$ where $g=\pa Q-\pa\check Q$. Since the statement is local, we consider
 a conformal map $\phi$ that takes a neighbourhood of a boundary point in $S$ onto a neighbourhood of a point in $\R$ and takes (parts of) $\pa S$ to $\R$. If we denote $F=f_0\circ \phi$ and $G=g\circ \phi$, then $F=0$ and $G=0$ on $\R$. Moreover, $G$ is real analytic
with non-vanishing derivative $G_y$. Thus it is enough to check that
$$H(x,y)=\frac{F(x,y)}y$$
has bounded derivatives of all orders. We will go through the details for $H, H_y, H_{yy}, \dots$. Applying the same argument to $H_x$, we get the boundedness of the derivatives $H_x, H_{xy}, H_{yy}, \dots$, etc.

\ms\no Let us show, e.g., that $H':=H_y$ is bounded. We have
$$H'=\frac{yF'-F}{y^2}=\frac{y(F'_0+O(y))-(yF'_0+O(y^2))}{y^2}=O(1),$$
where $F_0':=F'(\cdot, 0)$ and all big $O$'s are uniform in $x$. (They come from the bounds for the derivatives of $F$.) Similarly,
$$H''=\frac{y^2F''-2yF'+2F}{y^3}.$$
The numerator is
$$y^2(F''_0+O(y))-2y(F'_0+yF_0''+O(y^2))+2(yF'_0+\frac12y^2F_0''+O(y^3))=O(y^3),$$
etc. (We can actually stop here because we only need $C^2$ smoothness to apply Theorem \ref{ThmW}.)

\bs\no Proof of (ii).  Let   $n=n(\zeta)$  be the exterior unit normal with respect to $S$. We have
$$f_0(\zeta+\delta n)\sim \delta\pa_nf_0(\zeta)=2\delta\cdot (\bp f_0)(\zeta)\cdot \overline{n(\zeta)},\qquad
\text{as}\quad \delta\downarrow 0.$$ Similarly, if
$g:=\pa Q-\pa\check Q$, so $g=0$ on $\pa S$ and $\bp g=\pa\bp Q$ in $\C\sm S$, then
$$g(\zeta+\delta n)\sim \delta\pa_ng(\zeta)=2\delta\cdot (\bp g)(\zeta)\cdot\overline{n(\zeta)},\quad
\text{as}\quad \delta \downarrow 0,$$
where $\bp g(\zeta)$ denotes the $\bp$-derivative in the exterior sense. It follows that
$$\frac {f_0(\zeta+\delta n)}{g(\zeta+\delta n)}\sim\frac{\bp f_0(\zeta)}{\pa\bp Q(\zeta)},\qquad (\delta\downarrow 0),$$
which proves the continuity of $v_0$.
\end{proof}

We have established that $v=v_0+v_+$ satisfies conditions (i)-(iii) of Corollary \ref{Cor2.5}. Thus the convergence in \eqref{LWI} holds, and by \eqref{2tn} we conclude the following result.

\begin{cor} If $f=f_0+f_+$, then
$$\ti\nu(f)=\frac14\sigma(\pa v)+\sigma(v\pa h).$$\end{cor}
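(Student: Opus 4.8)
The plan is to deduce the corollary directly from the limit Ward identity \eqref{LWI}, now that the vector field $v=v_0+v_+$ has been shown to be admissible. First I would note that, by the preceding lemma, $v\in{\rm Lip}(\C)$ and the restrictions of $v$ to $S$ and to $S_*=(\C\sm S)\cup\pa S$ are $C^\infty$; hence $v$ satisfies hypotheses (i)--(iii) of Corollary \ref{Cor2.5}, so the convergence
$$\frac2\pi\int_\C\left[v\pa\bp Q+\bp v(\pa Q-\pa\check Q)\right]\ti D_n\ \longrightarrow\ -\tfrac12\sigma(\pa v)-2\sigma(v\pa h)$$
is legitimate for this particular $v$.

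Next I would use the algebraic identity $v\pa\bp Q+\bp v\cdot\pa(Q-\check Q)=\bp f$, which holds in $\C\sm\pa S$ by the very construction of $v_0$ and $v_+$: on $S$ one has $\pa\check Q=\pa Q$, so the left-hand side reduces to $v\pa\bp Q=\bp f_+ +\bp f_0=\bp f$, while on $\C\sm S$ the two pieces $v_0=f_0/(\pa Q-\pa\check Q)$ and $v_+\equiv 0$ combine to give $\bp v\cdot\pa(Q-\check Q)=f_0$'s contribution together with the $v\pa\bp Q$-term summing to $\bp f_+=\bp f$ there (recall $f_0=0$ and $\bp f_+=0$ off $S$ up to the harmonic correction). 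Since $\pa S$ has zero area, combining this identity with the representation \eqref{tni}, namely $\ti\nu_n(f)=-\frac1\pi\int\bp f\cdot\ti D_n$, yields exactly \eqref{2tn}: $-2\ti\nu_n(f)=\frac2\pi\int_\C[v\pa\bp Q+\bp v(\pa Q-\pa\check Q)]\ti D_n$.

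Finally I would pass to the limit in \eqref{2tn}. The right-hand side converges, by the displayed convergence above, to $-\frac12\sigma(\pa v)-2\sigma(v\pa h)$; therefore the left-hand side converges as well, which simultaneously establishes the existence of $\ti\nu(f)=\lim_n\ti\nu_n(f)$ and gives $-2\ti\nu(f)=-\frac12\sigma(\pa v)-2\sigma(v\pa h)$. Dividing by $-2$ produces $\ti\nu(f)=\frac14\sigma(\pa v)+\sigma(v\pa h)$, as claimed.

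The argument carries no new analytic difficulty: the one genuinely delicate point, namely that the function $v$ patched together on $S$ and $\C\sm S$ is actually Lipschitz across $\pa S$ and $C^2$-smooth up to the boundary from either side, has already been settled in the previous lemma (continuity of $v_0$ on $\pa S$ via the comparison $f_0(\zeta+\delta n)/g(\zeta+\delta n)\to\bp f_0(\zeta)/\pa\bp Q(\zeta)$, together with the Hardy-type boundedness of $F(x,y)/y$ and its derivatives). Granting that, the corollary is a purely formal consequence of Corollary \ref{Cor2.5} and the identity \eqref{2tn}. I would also remark that the companion case $f=f_-$ is handled identically, the relevant vector field being obtained by complex conjugation, so that the two results together will feed into the derivation of the main formula in the next subsection.
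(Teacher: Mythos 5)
Your proposal follows the paper's proof exactly: verify the admissibility of $v=v_0+v_+$ via the preceding lemma, use the algebraic identity $v\,\pa\bp Q+\bp v\cdot\pa(Q-\check Q)=\bp f$ together with \eqref{tni} to obtain \eqref{2tn}, and then pass to the limit via Corollary \ref{Cor2.5}. One small correction to your parenthetical verification on $\C\sm S$: there one computes (using $\bp\pa\check Q=0$ and $\bp f_+=0$ off $S$) that $v\,\pa\bp Q+\bp v\cdot\pa(Q-\check Q)=\bp f_0=\bp f$, not ``$\bp f_+=\bp f$''; also $f_0$ vanishes on $\pa S$ but not throughout $\C\sm S$, so the remark ``$f_0=0$ off $S$'' is inaccurate. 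These slips do not affect the argument, whose structure and conclusion are correct and identical to the paper's.
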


\bs

\newpage

\subsection{Conclusion of the proof}

\subsubsection*{(a)} Let us now consider the general case
$$f=f_++f_0+f_-.$$
By the last corollary we have
$$ \ti\nu(f_+)=\frac14\sigma(\pa v_+)+\sigma(v_+\pa h), \qquad v_+:=\frac{\bp f_+}{\pa\bp Q}\cdot 1_S .$$
Using complex conjugation we get a similar expression for $\ti\nu(f_-)$:
 $$\ti\nu(f_-)=\frac14\sigma(\bp v_-)+\sigma(v_-\bp h),\qquad
v_-:=\frac{\pa f_-}{\pa\bp Q}\cdot 1_S.$$
Indeed,
$$\ti\nu(f_-)=\overline{\ti\nu(\overline{f_-})}=\overline{\frac14\sigma(\pa \bar v_-)+\sigma(\bar v_-\pa h)}=\frac14\sigma(\bp v_-)+\sigma(v_-\bp h).$$
(Recall that $h$ is real-valued.)

\ms\no Summing up we get
\begin{equation}\label{sum1}\nu(f)= \frac14\left[\sigma(\pa v_0)+\sigma(\pa v_+)+\sigma(\bp v_-)\right]\end{equation}
and
\begin{equation}\label{sum2}\ti \nu(f)-\nu(f)=\sigma(v_+\pa h)+
\sigma(v_0\pa h)+\sigma(v_-\bp h).\end{equation}

\subsubsection*{ (b) Computation of  $\nu(f)$.} Recall that
$$d\sigma(z)=\frac 1 {2\pi}\Delta Q(z)\1_S(z)d^2 z\quad ,\quad L=\log\Delta Q.$$
Using \eqref{sum1} we compute
\begin{align*}\nu(f)&=\frac1{2\pi}\int_S\pa\left(\frac{\bp f_0+\bp f_+}{\pa\bp Q}\right)~\pa\bp Q+\frac1{2\pi}\int_S\bp\left(\frac{\pa f_-}{\pa\bp Q}\right)~\pa\bp Q\\
&=\frac1{2\pi}\int_S\pa\left(\frac{\bp f_0}{\pa\bp Q}\cdot \pa\bp Q\right)-\frac1{2\pi}\int_S  \frac{\bp f_0}{\pa\bp Q}~\pa(\pa\bp Q)+\frac1{2\pi}\int_S\pa\left(\frac{\bp f_+}{\pa\bp Q}\right)~\pa\bp Q+\frac1{2\pi}\int_S\bp\left(\frac{\pa f_-}{\pa\bp Q}\right)~\pa\bp Q\\
&=\frac1{2\pi}\int_S\pa\bp f
-\frac1{2\pi}\int_S\bp f_0~\pa L-\frac1{2\pi}\int_{S}\bp f_+~\pa  L-\frac1{2\pi}\int_{S}\pa f_-~\bp L
\end{align*}
At this point, let us modify  $L$ outside some neighborhood of $S$ to get a smooth function with compact support. We will still use the notation $L$ for the modified function. The last expression clearly does not change as a result of this modification. We can now transform the integrals involving $L$ as follows:
\begin{align*}
-\int_S \bp f_0~\pa L-&\int_{\C}\bp f_+~\pa  L-\int_{\C}\pa f_-~\bp L
=\int_S f_0~\pa\bp L+\int_{\C} (f_++f_-)~\pa\bp  L\\
&=\int_S f~\pa\bp L+\int_{\C\setminus S} f^S~\pa\bp  L,
\end{align*}
and we conclude that
$$\nu(f)=\frac1{8\pi}\left[\int_S\Delta f+\int_Sf\Delta L+\int_{\C\sm S}f^S\Delta L\right]$$

\bs\no {\bf Note.} The formula for $\nu(f)$ was stated in this form in \cite{AHM2}.

\bs\no Let us finally express  the last integral in terms of  Neumann's jump. We have
\begin{align*}\int_{\C\sm S}f^S\Delta  L&= \int_{\C\sm S} \left(f^S\Delta L-  L\Delta f^S \right)\\
&=  \int_{\pa S}  \left(f^S\cdot\pa_{n_*} L- \pa_{n_*} f^S\cdot L^S \right)~ds\\
&=  \int_{\pa S}  \left(f^S\cdot\pa_{n_*} L-  f^S\cdot\pa_{n_*} L^S \right)~ds\\
&= \int_{\pa S} f\NN(L^S)~ds
\end{align*}
In conclusion,
\begin{equation}\label{sum3}\nu(f)=\frac1{8\pi}\left[\int_S\Delta f+\int_Sf\Delta L+\int_{\pa S}f\NN(L^S)\right].\end{equation}

\subsubsection*{(c)  Computation of  $[\ti\nu(f)-\nu(f)]$} Using the identity \eqref{sum2}, we can deduce that
\begin{align*}\ti\nu(f)-\nu(f)&=\frac2\pi\left[\int_S\bp f_+\pa h+\int_S\pa f_-\bp h+\int_S\bp f_0\pa h
\right]\\
&=\frac1{2\pi}\int \nabla f^S\cdot\nabla h^S  .\end{align*}
This is because
$$\int_S\bp f_+\pa h=\int_\C\bp f_+\pa h=-\frac14\int_\C  f_+\Delta h=\frac14\int_\C  \nabla f_+\cdot \nabla h,$$
and similarly
$$\int_S\pa f_-\bp h=\frac14\int_\C  \nabla f_-\cdot \nabla h.$$
On the other hand,
$$\int_S\bp f_0\pa h=-\frac14\int_S  f_0 \Delta h=  \frac14\int_S  \nabla f_0\cdot \nabla h     .$$
Therefore,
$$\ti\nu(f)-\nu(f)=\frac1{2\pi}\left[\int_S  \nabla f\cdot \nabla h +\int_{\C\sm S}  \nabla f^S\cdot \nabla h\right],$$
and this is equal to
$$\frac1{2\pi}\int_{\C}  \nabla f^S\cdot \nabla h=\frac1{2\pi}\int_{\C}  \nabla f^S\cdot \nabla h^S.$$
Applying \eqref{sum3} we find that
$$\ti\nu(f)=\frac1{8\pi}\left[\int_S\Delta f+\int_Sf\Delta L+\int_{\pa S}f\NN(L^S)\right]+\frac1{2\pi}\int_{\C}  \nabla f^S\cdot \nabla h^S,$$
and the main formula \eqref{mfo} has been completely established. q.e.d.

\section*{Appendix: Bulk asymptotics for the correlation kernel}

\subsection*{
Polynomial Bergman spaces} For a suitable (extended) real valued function $\phi$, we denote by $L^2_\phi$ the space normed by
$\|f\|_\phi^2=\int_\C|f|^2e^{-2\phi}$. We denote by $A^2_\phi$ the subspace of $L^2_\phi$ consisting of a.e. entire functions;
$\PP_n(e^{-2\phi})$ denotes the subspace consisting of analytic polynomials of degree at most $n-1$.

Now consider a potential $Q$, real analytic and strictly subharmonic in some neighborhood of the droplet $S$, and subject to the
usual growth condition. We put
$$\ti Q\equiv \ti Q_n=Q-\frac1n h,$$
where $h$ is a smooth bounded real function.

We denote by $K$ the reproducing kernel for the space $\PP_n(e^{-2nQ})$, and write
$K_w(z)=K(z,w)$. The corresponding orthogonal projection is denoted by
$$P_n:L^2_{nQ}\to \PP_n(e^{-2nQ})\quad :\quad f\mapsto \left( f,K_w\right)_{nQ}.$$
The map $\ti P_n:L^2_{n\ti Q}\to  \PP_n(e^{-2n\ti Q})$ is defined similarly, using the reproducing kernel $\ti K$ for the space
$\PP_n(e^{-2n\ti Q})$.

\ms\no
We define approximate kernels and Bergman  projection as follows.
In the case $h=0$, the well-known first order approximation inside the droplet is given by the expression
$$K_w^\#(z)=\frac2\pi(\pa_1\pa_2Q)(z,\bar w)~n
e^{2nQ(z,\bar w)},$$
where $Q(\cdot,\cdot)$ is the complex  analytic function of two variables satisfying $$Q(w,\bar w)=Q(w).$$ If the perturbation $h\ne0$
is a real-analytic function, we can
 just replace $Q$ by $\ti Q$ in this expression. Note that in this case, the analytic extension $h(\cdot,\cdot)$
 satisfies
$$h(z,\bar w)=h(w)+(z-w)\pa h(w)+\dots,\qquad (z\to w).$$
This motivates the definition of the approximate Bergman kernel in  the case where $h$ is only a smooth function: we set
$$\ti K_w^\#(z)=K_w^\#(z)~e^{-2h_w(z)},$$
where
$$h_w(z):=h(w)+(z-w)\pa h(w).$$
 The approximate Bergman projection is defined accordingly:
$$\ti P_n^\#f(w)=(f, \ti K_w^\#)_{n\ti Q}.$$

\bs\no
The kernels $\ti K_n^\#(z,w)$ do not  have the Hermitian property. The important fact is that they are analytic in $z$.

\subsection*{Proof of Theorem \ref{ThmK}}
We shall prove the following estimate.

\begin{lemA1} \label{AppL1} If $z\in S$, $\delta(z)>2\delta_n$, and if $|z-w|<\delta_n$, then
$$\left|\ti K_w(z)-\ti K_w^\#(z)\right|\lesssim e^{nQ(z)}~e^{nQ(w)}.$$
\end{lemA1}

Before we prove the lemma, we use it to conclude the proof of Theorem \ref{ThmK}.
Recall that
$$\ti \K_n(z,w)=\ti K_w(z) ~e^{-n\ti Q(z)}~e^{-n\ti Q(w)}.$$
If we define
$$\ti \K_n^\#(z,w)=\ti K_w^\#(z) ~e^{-n\ti Q(z)}~e^{-n\ti Q(w)},$$
then
by Lemma A.1,
$$\ti \K_n(z,w)=\ti \K_n^\#(z,w)+O(1).$$
On the other hand, we have
$$\ti \K_n^\#(z,w)=\K_n^\#(z,w)~e^{h(z)+h(w)-2h_w(z)},$$
so
$$|\ti \K_n^\#(z,w)|=|\K_n^\#(z,w)|~(1+O(|w-z|^2)=|\K_n^\#(z,w)|+O(1).$$
It follows that
$$|\ti \K_n(z,w)|=|\K_n^\#(z,w)|+O(1),$$
as claimed in Theorem \ref{ThmK}. $\qed$

It remains to prove Lemma A.1.

\begin{lemA2} \label{AppL2} If $f$ is analytic and bounded in $D(z;2\delta_n)$ and $w\in D(z;\delta_n)$, then
$$\left|f(w)-\ti P^\#_n(\chi_z f)(w)\right|\lesssim \frac1{\sqrt n}~e^{nQ(w)}~\|f\|_{nQ}.$$\end{lemA2}

\ss\no Here $\chi=\chi_z$ is a cut-off function with $\chi=1$ in $D(z;3\delta_n/2)$ and $\chi=0$
outside $D(z;2\delta_n)$ satisfying  $\|\bp \chi\|_2\asymp 1.$

\ms\no \begin{proof} Wlog, $w=0$, so $\ti P^\#_n(\chi f)(w)$ is the integral
$$I^\#=\frac 1 \pi
\int \chi(\zeta)\cdot f(\zeta)\cdot2(\pa_1\pa_2 Q)(0,\bar\zeta)\cdot e^{2[h(\zeta)-h(0)-\bar \zeta\bp h(0)]}\cdot ne^{-2n[Q(\zeta,\bar \zeta)-Q(0,\bar \zeta)]}.$$
Since
$$\bp_\zeta\left[e^{-2n[Q(\zeta,\bar \zeta)-Q(0,\bar \zeta)]}\right]=-2[\pa_2Q(\zeta,\bar\zeta)-\pa_2Q(0,\bar\zeta)]~ne^{-2n[Q(\zeta,\bar \zeta)-Q(0,\bar \zeta)]},$$
we can rewrite the expression as follows:
$$I^\#=-\frac 1 \pi \int \frac1\zeta~f(\zeta)\chi(\zeta)A(\zeta)B(\zeta)~\bp\left[e^{-2n[Q(\zeta,\bar \zeta)-Q(0,\bar \zeta)]}\right],$$
where
$$A(\zeta)=\frac{\zeta~(\pa_1\pa_2 Q)(0,\bar\zeta)}{\pa_2Q(\zeta,\bar\zeta)-\pa_2Q(0,\bar\zeta)},$$
and
$$B(\zeta)=e^{2[h(\zeta)-h(0)-\bar \zeta\bp h(0)]}.$$
A trivial but  important observation is that
$$A,B=O(1)\quad ,\quad \bp A=O(|\zeta|)\quad ,\quad  \bp B=O(|\zeta|),$$
where the $O$-constants have uniform bounds throughout.

\bs\no Integrating by parts
we get
$$I^\#=f(0)+\epsilon_1+\epsilon_2,$$
where
$$\epsilon_1=\int \frac{f~(\bp\chi)~ AB}\zeta~e^{-2n[Q(\zeta,\bar \zeta)-Q(0,\bar \zeta)]}\quad,\quad \epsilon_2=\int \frac{f\chi~ \bp(AB)}\zeta~e^{-2n[Q(\zeta)-Q(0,\bar \zeta)]}.$$
Using that
$$|\epsilon_1|\lesssim \frac1{\delta_n}\int |f|~|\bp\chi|~e^{-2n[Q(\zeta)-\re Q(0,\bar \zeta)]}
\quad ,\quad
|\epsilon_2|\lesssim \int \chi ~|f|~e^{-2n[Q(\zeta)-\re Q(0,\bar \zeta)]},$$
and noting that Taylor's formula gives
$$ e^{-n[Q(\zeta)-2\re Q(0,\bar\zeta)]}\lesssim e^{nQ(0)-cn|\zeta|^2},\quad (c\sim \Delta Q(0)>0,\quad
|\zeta|\le 2\delta_n)$$
we find, by the Cauchy--Schwarz inequality, (since $|\zeta|\ge \delta_n$ when $\bp\chi(\zeta)\ne 0$)
$$|\epsilon_1|e^{-nQ(0)}\lesssim \frac {e^{-cn\delta_n^2}} {\delta_n}\|f\|_{nQ}\|\bp\chi\|_{L^2}\lesssim \frac 1 {\sqrt{n}}\|f\|_{nQ}$$
and
$$|\epsilon_2|e^{-nQ(0)}\lesssim \|f\|_{nQ}\left( \int e^{-nc|\zeta|^2}\right)^{1/2}\lesssim \frac 1 {\sqrt{n}}\|f\|_{nQ}.$$
The proof is finished.
\end{proof}

Suppose now that $\dist(z,{\mathbb C}\setminus S)\ge 2\delta_n$
and $|w-z|\le \delta_n$.

From Lemma A.2, we conclude that
\begin{equation}\label{E1}\left|\ti K_w(z)-\ti P_n\left[\chi_z \ti K_w^\#\right](z)\right|\lesssim e^{nQ(z)}~e^{nQ(w)}.\end{equation}
This is because
$$\ti P_n^\#\left[\chi_z \ti K_z\right](w)=(\chi_z \ti K_z, \ti K^\#_w)_{n\ti Q}=\overline{(\chi_z \ti K^\#_w, \ti K_z)}_{n\ti Q}=\overline{\ti P_n\left[\chi \ti K^\#_w\right](z)},$$
so
$$\left|\ti K_w(z)-\ti P_n\left[\chi_z \ti K_w^\#\right](z)\right|=\left|\ti K_z(w)-\ti P_n^\#\left[\chi_z \ti K_z\right](w)\right|
$$
and because (cf. \cite{AHM1}, Section 3)
$$\|\ti K_z\|=\sqrt{\ti K_z(z)}\lesssim \sqrt n e^{nQ(z)}.$$

\ms\no On the other hand, we will prove that
\begin{equation}\label{E2}\left|\ti K_z^\#(w)-\ti P_n\left[\chi_z \ti K_z^\#\right](w)\right|\lesssim e^{nQ(z)}~e^{nQ(w)},\end{equation}
which combined with \eqref{E1} proves Lemma A.1. The verification of the last inequality  is the same as in \cite{B1} or \cite{A},
depending on the observation that $L^2_{nQ}=L^2_{n\ti Q}$ with equivalence of norms. We give a detailed argument,  for completeness.

For given smooth $f$, consider $u$, the $L^2_{nQ}$-minimal solution to the problem
\begin{equation}\label{E3}\bar \partial u=\bar\partial f\quad \text{and}\quad u-f\in {\mathcal P}_{n-1}.\end{equation}
Since $\|u\|_{n\tilde Q}\lesssim \|u\|_{nQ}$, the $L^2_{n\tilde{Q}}$-minimal solution
$\tilde{u}$ to the problem \eqref{E3} satisfies $\|\tilde u\|_{n\tilde{Q}}\le C\|u\|_{nQ}$.
We next
observe that $P_n f$ is related to the $L^2_{nQ}$-minimal solution $u$ to the problem \eqref{E3}
by $u=f-P_n f$.

We write
$$u(\zeta)=\chi_z(\zeta)\ti K_w^\#(\zeta)-P_n\left[\chi_z\ti K_w^\#\right](\zeta),$$
i.e., $u$ is the $L^2_{nQ}$-minimal solution to \eqref{E3} for $f=\chi_z\cdot\ti K_w^\#$.
Let us verify that
\begin{equation}\label{E4}
\left\|u\right\|_{nQ}\lesssim \frac 1 {\sqrt{n}}\left\|\bar\partial\left(\chi_z\cdot \ti K_w^\#\right)\right\|_{nQ}.
\end{equation}
To prove this, we put
$$2\phi(\zeta)=2\check {Q}(\zeta)+n^{-1}\log\left(1+|\zeta|^2\right),$$
and consider the function $v_0$, the $L^2_{n\phi}$-minimal solution to the problem $\bar\partial v=\bar\partial\left(\chi_z\cdot \ti K_w^\#\right)$. Notice that $\phi$ is strictly subharmonic on $\C$.
By H\"{o}rmander's estimate (e.g. \cite{H}, p. 250)
$$\|v_0\|_{n\phi}^2\lesssim\int_{{\mathbb C}}
\left|\bar\partial\left(\chi_z\cdot\ti k_w^\#\right)\right|^2\frac {e^{-2n\phi}}
{n\Delta\phi}.$$
Since $\chi_z$ is supported in $S$, we hence have
$$\|v_0\|_{n\phi}\lesssim \frac 1 {\sqrt{n}}\left\|\bar\partial\left(\chi_z\cdot \ti K_w^\#\right)\right\|_{nQ}.$$

We next observe that by the growth assumption on $Q$ near infinity, we have an estimate $n\phi\le nQ+\text{const.}$ on ${\mathbb C}$, which gives $\|v_0\|_{nQ}\lesssim \|v_0\|_{n\phi}$. It yields that
$$\|v_0\|_{nQ}\lesssim \frac 1 {\sqrt{n}}\left\|\bar\partial\left(\chi_z\cdot \ti K_w^\#\right)\right\|_{nQ}.$$
But $v_0-\chi_z\cdot \ti K_w^\#$ belongs to the weighted Bergman space $A^2_{n\phi}$. Since $2n\phi(\zeta)=(n+1)\log|\zeta|^2+O(1)$ as $\zeta\to\infty$, the latter space coincides with ${\mathcal P}_{n-1}$ as sets. This shows that $v_0$ solves the problem \eqref{E3}. Since $\|u\|_{nQ}\le \|v_0\|_{nQ}$, we then obtain
\eqref{E4}.

By norms equivalence, \eqref{E4} implies that
\begin{equation}\label{E5}
\left\|\ti u\right\|_{n\ti{Q}}\lesssim \frac 1{\sqrt{n}}\left\|\bar\partial\left(\chi_z\cdot \ti K_w^\#\right)\right\|_{n\ti Q},
\end{equation}
where
$$\ti u=\chi_z\ti K_w^\#-\ti P_n\left[\chi_z\ti K_w^\#\right]$$ is the
$L^2_{n\ti Q}$-minimal solution to \eqref{E3} with $f=\chi_z\ti K_w^\#$.

We now set out to prove the pointwise estimate
\begin{equation}\label{E6}|\ti u(z)|\lesssim ne^{-cn\delta_n^2}e^{n(Q(z)+Q(w))}.\end{equation}
To prove this, we first observe that
$$\bar \partial \ti u(\zeta)=\bar\partial\left(\chi_z\cdot \ti K_w^\#\right)(\zeta)=\bar\partial \chi_z(\zeta)\cdot
\ti K_w^\#(\zeta),$$
whence, by the form of $\ti K_w^\#$ and Taylor's formula,
$$\left|\bar \partial \ti u(\zeta)\right|^2e^{-2nQ(\zeta)}\lesssim n^2\left|\bar\partial\chi_z(\zeta)\right|^2 e^{2n(Q(w)-c|\zeta-w|^2)}$$
with a positive constant $c\sim \Delta Q(z)$. Since $|\zeta-w|\ge \delta_n/2$ when $\bar\partial \chi(\zeta)\ne 0$, it yields
$$\left|\bar\partial\left(\chi_z\cdot \ti K_w^\#\right)\right|^2e^{-2nQ(\zeta)}
\lesssim n^2\left|\bar\partial \chi_z(\zeta)\right|^2e^{2nQ(w)-cn\delta_n^2}.$$
We have shown that
$$\left\|\bar\partial\left(\chi_z\cdot \ti K_w^\#\right)\right\|_{n Q}\lesssim ne^{-nc\delta_n^2}e^{n Q(w)}.$$
In view of the estimate \eqref{E5}, we then have
$$\|\ti u\|_{n Q}\lesssim \sqrt{n}e^{-nc\delta_n^2}e^{n Q(w)}.$$
Since $\ti u$ is analytic
in $D(z;1/\sqrt{n})$
we can now invoke the simple estimate (e.g. \cite{AHM1}, Lemma 3.2)
$$|\ti u(z)|^2e^{-2n Q(z)}\lesssim n\|\ti u\|_{n Q}^2$$
to get
$$|\ti u(z)|\lesssim ne^{-nc\delta_n^2}
e^{n( Q(z)+ Q(w))}.$$
This gives \eqref{E2}, and finishes the proof of Lemma A.1. $\qed$

\begin{rem} The corresponding estimate in \cite{A}, though correct, contains an unnecessary factor "$\sqrt{n}$''. \end{rem}


\begin{thebibliography}{999}
\bibitem{A} Ameur, Y., \textit{Near boundary asymptotics for correlation kernels}, J. Geom. Anal. (2011), available online at DOI 10.1007/s12220-011-9238-4.
\bibitem{AHM1} Ameur, Y., Hedenmalm, H., Makarov, N., \textit{Berezin transform in polynomial Bergman spaces}, Comm. Pure Appl. Math. \textbf{63} (2010), 1533--1584.
\bibitem{AHM2} Ameur, Y., Hedenmalm, H., Makarov, N., \textit{Fluctuations of eigenvalues of random normal matrices}, Duke Math. J. \textbf{159} (2011), 31--81.
\bibitem{AKM} Ameur, Y., Kang, N.-G., Makarov, N., In preparation.
\bibitem{B1} Berman, R., \textit{Bergman kernels and weighted equilibrium
measures of $\C^n$.} Indiana Univ. J. Math. \textbf{5} (2009).
\bibitem{B2} Berman, R., \textit{Determinantal point processes and fermions on complex manifolds: bulk universality}, Preprint in 2008 at arXiv.org/abs/math.CV/08113341.
\bibitem{B} Borodin, A., \textit{Determinantal point processes}, Preprint in 2009 at ArXiv.org/0911.1153.
\bibitem{BS} Borodin, A., Sinclair, C. D., \textit{The Ginibre ensemble of real random matrices and its scaling limits}, Commun. Math. Phys. \textbf{291} (2009), 177--224.
\bibitem{EF} Elbau, P., Felder, G., \textit{Density of eigenvalues of random normal matrices}, Commun. Math. Phys. \textbf{259}
(2005), 433--450.
\bibitem{HM} Hedenmalm, H., Makarov, N., \textit{Coulomb gas ensembles and Laplacian growth}, Preprint in 2011 at arXiv.org/abs/math.PR/1106.2971.
\bibitem{HS} Hedenmalm, H., Shimorin, S., \textit{Hele-Shaw flow on hyperbolic surfaces}, J. Math. Pures. Appl. \textbf{81} (2002), 187--222.
\bibitem{H} H\"ormander, L., \textit{Notions of convexity},
Birkh\"auser 1994.
\bibitem{J} Johansson, K., \textit{On fluctuations of eigenvalues of
random Hermitian matrices}, Duke Math. J. \textbf{91} (1998),
151--204.
\bibitem{M} Mehta, M. L., \textit{Random matrices}, Academic Press 2004.
\bibitem{RV} Rider, B., Vir\'{a}g, B., \textit{The noise in the circular law
and the Gaussian free field}, Internat. Math. Research notices
\textbf{2007}, no. 2.
\bibitem{ST} Saff, E. B., Totik, V., \textit{Logarithmic potentials with external fields}, Springer 1997.
\bibitem{Sa} Sakai, M., \textit{Regularity of a boundary having a Schwarz function}, Acta Math. \textbf{166} (1991), 263--297.
\bibitem{S} Soshnikov, A., \textit{Determinantal random point fields},
Russ. Math. Surv. \textbf{55} (2000), 923--975.
\bibitem{WZ1} Wiegmann, P., Zabrodin, A., \textit{Large $N$ expansion for the $2D$ Dyson gas}, J. Phys. A.: Math. Gen. \textbf{39}
(2006), 8933--8964.
\bibitem{WZ2} Wiegmann, P., Zabrodin, A., \textit{Large $N$ expansion for the normal and complex matrix ensembles}, Frontiers in number theory, physics, and geometry I
(2006), Part I, 213--229.
\bibitem{WZ3} Wiegmann, P., Zabrodin, A., \textit{Large scale correlations in normal non-Hermitian matrix ensembles}, J. Phys. A.:
Math. Gen. \textbf{36} (2003), 3411--3424.
\bibitem{Z} Zabrodin, A., \textit{Matrix models and growth
processes: from viscous flows to the quantum Hall effect}, Preprint
in 2004 at arXiv.org/abs/hep-th/0411437.
\end{thebibliography}
\end{document}